\documentclass[a4paper,12pt]{amsart}
%\pdfoutput=1
%\usepackage{etex}
%\usepackage{subdepth}
\usepackage{amssymb,amsfonts,amsmath,amsthm}
\usepackage{amsrefs}
\usepackage{tikz}
%\usetikzlibrary{arrows}
\usepackage{doi}
\usepackage[margin=1in]{geometry}

\sloppy
%=======================================================================
%%% Some macros

\def\={\; = \;}
\def\leqs{\; \le \;}

\def\eqs{\; \equiv \;}
\def\:{\; \colon \;}
\def\ftt{\widehat}

\newcommand*\pmat[4]{\begin{pmatrix}#1&#2\\#3&#4\end{pmatrix}}
\newcommand*\smat[4]{\begin{smallmatrix}#1&#2\\#3&#4\end{smallmatrix}}

\newcommand{\Mod}[1]{\ (\text{mod}\ #1)}
\newcommand{\ol}[1]{\overline{#1}}
\newcommand{\es}[1]{\; #1 \;}

\newcommand{\eps}{\varepsilon}
\newcommand{\sm}{\smallsetminus}

\newcommand{\RR}{{\mathbb{R}}}
\newcommand{\ZZ}{{\mathbb{Z}}}
\newcommand{\CC}{{\mathbb{C}}}
\newcommand{\QQ}{{\mathbb{Q}}}
\newcommand{\NN}{{\mathbb{N}}}
\newcommand{\HH}{{\mathfrak{H}}}

%Theta functions
\newcommand{\thU}{\Theta_{3}}
\newcommand{\thV}{\Theta_{2}}
\newcommand{\thW}{\Theta_{4}}

\newcommand{\jth}{J}
\newcommand{\rds}{\mathfrak{s}}

\newcommand{\dx}{dx}
\newcommand{\dy}{dy}
\newcommand{\dz}{dz}
\newcommand{\dt}{dt}
\newcommand{\SL}{\operatorname{SL}}
\newcommand{\PSL}{\operatorname{PSL}}
\renewcommand{\Re}{\mathrm{Re}}
\renewcommand{\Im}{\mathrm{Im}}

\theoremstyle{plain}
\newtheorem{theorem}{Theorem}
\newtheorem{lemma}{Lemma}

\newtheorem{proposition}{Proposition}
\newtheorem{corollary}{Corollary}
\newtheorem{question}{Question}

%== TITLES ==========================================================
\title{Fourier interpolation on the real line}
\date{}
\author[Radchenko]{Danylo Radchenko}
\address{The Abdus Salam International Centre for Theoretical Physics,
	Str.\ Costiera 11, 34151 Trieste, Italy}
\email{danradchenko@gmail.com}

\author[Viazovska]{Maryna Viazovska}
%\address{Berlin Mathematical School\\	Str.\ des 17.\ Juni 136\\	10623 Berlin, Germany and	Humboldt University of Berlin\\Rudower Chaussee 25\\12489 Berlin, Germany}
\address{\'{E}cole Polytechnique F\'{e}d\'{e}rale de Lausanne, 1015 Lausanne, Switzerland}
\email{viazovska@gmail.com}

\begin{document}

\begin{abstract}
	In this paper we
	construct an explicit interpolation formula for Schwartz functions
    on the real line.
	The formula expresses the value of a function at any given point in
    terms of the values of the function and its Fourier transform
	on the set $\{0, \pm\sqrt{1}, \pm\sqrt{2}, \pm\sqrt{3},\dots\}$. The functions in the interpolating basis are constructed in a closed form  as an integral transform of weakly holomorphic modular forms for the theta subgroup of the modular group.
\end{abstract}
\maketitle
\section{Introduction}
Let $f\colon\RR\to\RR$ be an integrable function and let $\ftt{f}$ be the
Fourier transform of $f$:
	\[\ftt{f}(\xi) \= \int_{-\infty}^{\infty}f(x)
    e^{-2\pi i\xi x}\dx.\]
The classical Whittaker-Shannon interpolation
formula (see~\cite{W},~\cite{Sha}) states that if the Fourier
transform $\ftt{f}$ is supported in $[-w/2,w/2]$, then
	\begin{equation}
    \label{eq:pw_interpolation}
    f(x) \= \sum_{n\in\ZZ}f(n/w)\, {\rm sinc}(wx-n),
    \end{equation}
where ${\rm sinc}(x) = \sin(\pi x)/(\pi x)$ is the cardinal sine function.
In other words, the functions~$s_n(x)={\rm sinc}(wx-n)$ form an
interpolation basis on the set $\frac{1}{w}\ZZ$ for the space of
functions whose Fourier transform is supported in $[-w/2,w/2]$
(the so-called Paley-Wiener space $PW_w$).
For a nice overview of history of the Whittaker-Shannon formula, its
generalizations and other related results, see~\cite{Hig}.

Note that it is not possible to apply the Whittaker-Shannon formula directly
to functions whose Fourier transform $\ftt{f}$ has unbounded support, say,
to $f(x)=\exp(-\pi x^2)$.
The main goal of this paper is to prove an interpolation formula that can
be applied to arbitrary Schwartz functions on the real line.
\begin{theorem}\label{thm:summation}
	There exists a collection of even Schwartz functions $a_n\colon\RR\to\RR$ with
	the property that for any even Schwartz function $f\colon\RR\to\RR$ and any $x \in \RR$ we have
	\begin{equation} \label{eq:summation}
	f(x) \= \sum_{n=0}^{\infty} a_{n}(x)f(\sqrt{n})+\sum_{n=0}^{\infty} \widehat{a}_{n}(x)\widehat{f}(\sqrt{n}),
	\end{equation}
	where the right-hand side converges absolutely.
\end{theorem}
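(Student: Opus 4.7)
The plan is to reduce the interpolation formula to a single master identity for the one-parameter family of Gaussians $f_\tau(x) = e^{\pi i \tau x^2}$ with $\tau \in \HH$, and then to produce the functions $a_n$ from weakly holomorphic weight-$1/2$ modular forms on the Hecke theta group $\Gamma_\theta = \langle \pmat{1}{2}{0}{1},\, \pmat{0}{-1}{1}{0} \rangle$. First I would form the generating series
$$F_+(\tau, x) = \sum_{n \geq 0} a_n(x)\, e^{\pi i n \tau}, \qquad F_-(\tau, x) = \sum_{n \geq 0} \widehat{a}_n(x)\, e^{\pi i n \tau}.$$
Since $\widehat{f_\tau}(\xi) = (-i\tau)^{-1/2} e^{-\pi i \xi^2/\tau}$, substituting $f = f_\tau$ into~\eqref{eq:summation} becomes the functional equation
$$e^{\pi i \tau x^2} = F_+(\tau, x) + (-i\tau)^{-1/2} F_-(-1/\tau,\, x), \qquad \tau \in \HH,$$
together with a companion obtained by swapping $(F_+, F_-)$ and $\tau \leftrightarrow -1/\tau$. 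These two equations force the pair $(F_+, F_-)$, viewed in $\tau$ with $x$ a real parameter, to transform like a vector-valued weight-$1/2$ modular form for $\Gamma_\theta$, and they prescribe its principal parts at the two cusps in terms of the Gaussian kernel.

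Next I would construct $F_\pm(\tau,x)$ explicitly. The group $\Gamma_\theta$ has exactly two cusps, $\infty$ and $1$; in the variable $q = e^{\pi i \tau}$, the expansion of $F_+$ at $\infty$ has coefficients $a_n(x)$, while the functional equation above turns the expansion at $1$ into one governed by $\widehat{a}_n(x)$. The natural building blocks for weakly holomorphic weight-$1/2$ forms on $\Gamma_\theta$ are the Jacobi theta constants $\thU$, $\thV$, $\thW$ and the Hauptmodul of $\Gamma_\theta$; writing down a basis in these and matching the required principal parts at both cusps to those of $e^{\pi i \tau x^2}$ should determine $F_\pm(\tau,x)$ uniquely (one also has to check that the Gaussian itself does not lie in the image, i.e.\ that there are no obstructions from weight-$1/2$ holomorphic cusp forms). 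Each $a_n(x)$ is then extracted as a Fourier coefficient, i.e.\ as a contour integral of $F_+(\tau, x)\, e^{-\pi i n \tau}$ along a horizontal segment in $\HH$, and similarly for $\widehat{a}_n(x)$.

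With such formulas in hand, standard growth estimates for weakly holomorphic modular forms at the two cusps translate, via the contour integral representation, into polynomial bounds $|a_n(x)| \leq C(x)\, n^{A}$ locally uniformly in $x$; combined with differentiation of the functional equation in $x$, these promote to the Schwartz property of each $a_n$, and to absolute convergence of~\eqref{eq:summation} for every Schwartz $f$ (since $f(\sqrt n)$ and $\widehat f(\sqrt n)$ decay faster than any polynomial in $n$). Finally, to pass from Gaussians to a general even Schwartz $f$, one can represent $f(x) = \int_\gamma g(\tau)\, e^{\pi i \tau x^2}\, d\tau$ as a superposition of Gaussians along a contour $\gamma \subset \HH$ with rapidly decaying density $g$; integrating the Gaussian identity against $g(\tau)\, d\tau$ and exchanging sum and integral by Fubini produces~\eqref{eq:summation} for $f$. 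The hardest step is the middle one: constructing $F_\pm(\tau, x)$ with precisely the right principal parts at both cusps of $\Gamma_\theta$ and verifying that the resulting family $\{a_n\}$ inherits the Schwartz property from the modular forms — this is where the weakly holomorphic forms for the theta group and the two-cusp pole-matching argument are essential.
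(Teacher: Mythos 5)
Your overall scaffolding matches the paper's (Gaussians as test functions, a $q$-generating series $F_\pm(\tau,x)$ satisfying a $\Gamma_\theta$-functional equation, polynomial growth of the coefficients, and a density argument to pass from Gaussians to all of $\mathcal{S}_{even}$), but the central construction step is described in a way that cannot work as stated. You propose to realize $F_\pm(\tau,x)$ as a weakly holomorphic \emph{modular form} of weight $1/2$ on $\Gamma_\theta$, built from $\thV,\thU,\thW$ and the Hauptmodul $\jth$, by matching principal parts at the two cusps to $e^{\pi i\tau x^2}$. This is impossible on two counts. First, the functional equation you wrote down, $F_\eps(\tau)+\eps(-i\tau)^{-1/2}F_\eps(-1/\tau)=e^{\pi i \tau x^2}+\eps(-i\tau)^{-1/2}e^{\pi i(-1/\tau)x^2}$, has a nonzero right-hand side: $F_\eps$ is a modular \emph{integral} (a solution of a weight-$1/2$ coboundary equation with a genuinely nontrivial Gaussian cocycle), not a modular form, so it cannot be written as a finite theta-and-Hauptmodul expression. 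Second, for generic real $x$ the Gaussian $e^{\pi i\tau x^2}$ is not a finite Laurent polynomial in $q^{1/2}$, so the phrase ``matching principal parts at the cusps'' has no meaning. The paper's construction resolves both problems: one first builds a family of weakly holomorphic forms $g_n^{\pm}$ of weight $3/2$ (not $1/2$; $1/2$ is the odd case) in a separate variable $z$, with prescribed behavior at the two cusps, and then sets $a_n$ (really $b_n^{\pm}$) to be the contour integral $\frac12\int_{-1}^{1}g_n^{\pm}(z)\,e^{i\pi x^2 z}\,dz$. The generating function is then $F_\eps(\tau,x)=\frac12\int_{-1}^1 K_\eps(\tau,z)e^{i\pi x^2 z}\,dz$ for a two-variable kernel $K_\eps$ that is weight $1/2$ in $\tau$, weight $3/2$ in $z$, with a simple pole on the diagonal $z=\tau$; the Gaussian cocycle term on the right-hand side of the functional equation is exactly the residue picked up when sliding the contour across $z=\tau$.

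Two further gaps. You appeal to ``standard growth estimates for weakly holomorphic modular forms'' to get $|a_n(x)|\le C(x)n^A$; but since $F_\eps$ is not modular, Hecke-type bounds do not apply directly, and the genuine work is in controlling the \emph{cocycle} $\phi_A$ one accumulates when reducing $\tau$ to the fundamental domain. The paper devotes its most technical section to this (a continued-fraction estimate for $\Im(\tau_j)$ along the $\Gamma_\theta$-word for $A$, a cocycle-growth lemma, and a separate contour estimate inside $\mathcal{D}$), ultimately obtaining $|b_n^{\pm}(x)|=O(n^2)$ \emph{uniformly} in $x$, not merely locally uniformly; the uniformity is needed to run the density argument cleanly. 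Finally, your last step assumes every even Schwartz $f$ can be written as $\int_\gamma g(\tau)\,e^{\pi i\tau x^2}\,d\tau$ with $g$ rapidly decaying — this is not immediate for all of $\mathcal{S}_{even}$. The paper instead shows the functional $\phi_x(f)=f(x)-\sum a_n(x)f(\sqrt n)-\sum\widehat a_n(x)\widehat f(\sqrt n)$ is a tempered distribution (this is where the polynomial bound is used), verifies it vanishes on Gaussians, and then passes to general $f$ through the dense subspace of compactly supported even smooth functions, each of which admits such a Gaussian-superposition via $f(x)=F(x^2)e^{-\pi x^2}$ and Fourier inversion of $F$.
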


As immediate corollary of Theorem~\ref{thm:summation},
we get the following.
\begin{corollary}
	Let $f\colon\RR\to\RR$ be an even Schwartz function that satisfies
	\begin{equation*}
		f(\sqrt{n}) \= \ftt{f}(\sqrt{n}) \= 0, \quad n\in\ZZ_{\ge 0}.
	\end{equation*}
	Then $f$ vanishes identically.
\end{corollary}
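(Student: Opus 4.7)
The plan is essentially immediate from Theorem~\ref{thm:summation}: the corollary is nothing more than the observation that the interpolation formula has a vanishing right-hand side whenever $f$ vanishes on all the sampling nodes together with its Fourier transform.

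More concretely, I would argue as follows. Let $f$ be an even Schwartz function satisfying the hypothesis $f(\sqrt{n}) = \widehat{f}(\sqrt{n}) = 0$ for every $n \in \ZZ_{\ge 0}$. Apply Theorem~\ref{thm:summation} to $f$ at an arbitrary point $x \in \RR$. By that theorem the series on the right-hand side of~\eqref{eq:summation} converges absolutely, and each term is of the form $a_n(x)f(\sqrt{n}) + \widehat{a}_n(x)\widehat{f}(\sqrt{n})$, which is zero by hypothesis. Therefore $f(x) = 0$ for every $x \in \RR$, which is the desired conclusion.

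There is essentially no obstacle to overcome: all the analytic content (existence of the interpolating Schwartz functions $a_n$, absolute convergence of the series, and the identity itself) is packaged into Theorem~\ref{thm:summation}. The only thing to verify is that the hypothesis of the corollary annihilates each summand individually, which is immediate. I would therefore present the proof as a single short paragraph applying the theorem and invoking absolute convergence to conclude that the zero sum equals $f(x)$ pointwise.
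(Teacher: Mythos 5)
Your argument is correct and is precisely the one the paper intends: the corollary is stated as an immediate consequence of Theorem~\ref{thm:summation}, and substituting the hypothesis into the right-hand side of~\eqref{eq:summation} gives $f(x)=0$ for all $x$.
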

Denote by $\rds$ the vector space of all rapidly decaying sequences of
real numbers, i.e., sequences $(x_n)_{n\ge0}$ such that
for all $k>0$ we have $n^kx_n \to 0, n\to\infty$. If we denote
by~$\mathcal{S}_{even}$ the space of even Schwartz functions on $\RR$
(see Section~\ref{sec:funcan} for a formal definition),
then there is a well-defined
map~$\Psi\colon\mathcal{S}_{even}\to\rds\oplus \rds$ given by
	\[\Psi(f) \= (f(\sqrt{n}))_{n\ge 0} \oplus (\ftt{f}(\sqrt{n}))_{n\ge 0}.\]
Together with Theorem~\ref{thm:summation} the following result gives a complete
description of what values an even Schwartz function and its Fourier transform
can take at $\pm\sqrt{n}$ for $n\ge0$.
\begin{theorem} \label{thm:isomorphism}
	The map $\Psi$ is an isomorphism of the space of even Schwartz functions
	onto the vector space $\ker L \subset \rds\oplus\rds$, where $L\colon\rds\oplus\rds\to\RR$
	is the linear functional
	\[L((x_n)_{n\ge0}, (y_n)_{n\ge0}) \= \sum_{n\in\ZZ} x_{n^2} \es{-} \sum_{n\in\ZZ} y_{n^2}.\]
\end{theorem}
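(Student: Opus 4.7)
My plan is to split the theorem into three assertions: $(i)$ $\Psi(\mathcal{S}_{even})\subseteq\ker L$; $(ii)$ $\Psi$ is injective; $(iii)$ $\Psi$ is surjective onto $\ker L$. For $(i)$, the Poisson summation formula applied to any even Schwartz $f$ yields
\[
L(\Psi(f))\=\sum_{n\in\ZZ}f(\sqrt{n^2})\es{-}\sum_{n\in\ZZ}\widehat{f}(\sqrt{n^2})\=\sum_{n\in\ZZ}f(n)\es{-}\sum_{n\in\ZZ}\widehat{f}(n)\=0,
\]
and $(ii)$ is just a restatement of the Corollary above.

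The substance of the theorem lies in $(iii)$. Given $(x,y)\in\ker L$, Theorem~\ref{thm:summation} suggests the candidate preimage
\[
g(t)\=\sum_{n\ge 0}x_n\, a_n(t)+\sum_{n\ge 0}y_n\,\widehat{a}_n(t).
\]
I would first verify that this series converges in the Schwartz topology to an even Schwartz function; this should follow from polynomial-in-$n$ growth of the Schwartz seminorms $\|a_n\|_{N,M}$ (a byproduct of the explicit modular construction of the $a_n$) combined with the rapid decay of $x_n$ and $y_n$. Next, writing $\Phi(x,y):=g$ and $T:=\Psi\circ\Phi$, Theorem~\ref{thm:summation} gives $\Phi\circ\Psi=\mathrm{id}_{\mathcal{S}_{even}}$, so $T^{2}=\Psi\Phi\Psi\Phi=T$; hence $T$ is a continuous projection of $\rds\oplus\rds$ with image $\Psi(\mathcal{S}_{even})$ and kernel $\ker\Phi$. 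Decomposing any $w\in\ker L$ as $w=u+v$ with $u\in\Psi(\mathcal{S}_{even})$ and $v\in\ker\Phi$, assertion $(i)$ forces $L(v)=0$, so surjectivity onto $\ker L$ is equivalent to
\[
\ker\Phi\cap\ker L\=0.
\]

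This last step is the main obstacle, as it genuinely uses the explicit structure of the $a_n$. My plan is to Fourier-transform any hypothetical relation $\sum_n x_n a_n+\sum_n y_n\widehat{a}_n\equiv 0$ to obtain the companion relation $\sum_n y_n a_n+\sum_n x_n\widehat{a}_n\equiv 0$, then pass to the $\pm 1$ eigenspaces of the Fourier transform on $\mathcal{S}_{even}$ by setting $u:=x+y$, $v:=x-y$, and $c_n^{\pm}:=a_n\pm\widehat{a}_n$. This decouples the two relations into $\sum_n u_n c_n^{+}\equiv 0$ in $\mathcal{S}_{even}^{+}$ and $\sum_n v_n c_n^{-}\equiv 0$ in $\mathcal{S}_{even}^{-}$. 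The explicit values of $a_n(\sqrt{m})$ and $\widehat{a}_n(\sqrt{m})$ established in the construction of the basis should yield a Kronecker delta property for the $c_n^{+}$ (forcing $u=0$) and a Kronecker property for the $c_n^{-}$ modulo exactly the single Poisson relation (forcing $v=0$ once $L(x,y)=0$), thereby establishing $\ker\Phi\cap\ker L=0$ and completing the proof.
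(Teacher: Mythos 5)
Your proposal is correct, and the architecture matches the paper's: both construct the candidate inverse $\Phi((x_n),(y_n)) = \sum_n x_n a_n + y_n\ftt{a}_n$, both rely on Theorem~1 to get $\Phi\circ\Psi=\mathrm{id}$, and both use the Kronecker delta values $b^{\eps}_m(\sqrt{n})=\delta_{m,n}$ (Proposition~1 together with the Remark giving $b^-_m(0)=-2$ at squares) to close the argument. The one place you diverge is the final step: the paper directly computes $\Psi\circ\Phi=\mathbb{I}_{\ker L}$ by evaluating $g=\Phi((x_n),(y_n))$ and $\ftt{g}$ at $\sqrt{m}$ — the $\ker L$ hypothesis enters precisely to fix the value at $m=0$ — whereas you instead run a projection argument to reduce surjectivity to $\ker\Phi\cap\ker L=0$, and then prove that by splitting $\Phi((x_n),(y_n))=0$ into its Fourier $\pm 1$ eigencomponents $\sum u_n b_n^+=0$ and $\sum v_n b_n^-=0$. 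These are essentially equivalent bookkeeping; the paper's direct computation is a bit shorter and gives you the inverse map explicitly rather than just its existence.

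One caution: you describe the polynomial growth of the Schwartz seminorms $\|a_n\|_{N,M}$ as ``a byproduct of the explicit modular construction,'' but this is in fact the only genuinely hard part of the proof and is not automatic. The paper handles it by applying the cocycle estimate (Lemma~\ref{lem:cocyclelemma}) and the kernel estimate (Lemma~\ref{lem:kernel_estimate}, which was stated from the outset in its general $(n,k)$ form precisely for this purpose) to the modified generating function $x^{\alpha}\partial_x^{\beta}F_{\eps}(\tau;x)$, whose period cocycle now involves $x^{\alpha}Q_{\beta}(x,\tau)e^{i\pi x^2\tau}$ rather than just $e^{i\pi x^2\tau}$. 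Without redoing that estimate one only has Theorem~\ref{thm:mainest} for $\|b_n^{\eps}\|_{0,0}$, which gives pointwise convergence of the series but not convergence in the Schwartz topology, so the well-definedness of $\Phi$ would be left unproved.
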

In the proof of Theorem~\ref{thm:summation} we will give an
explicit construction of the interpolating basis $\{a_n(x)\}_{n\ge0}$.
For instance, the Fourier invariant part of $a_n$ will be given~by
	\[a_n(x)+\ftt{a_n}(x)\=\int_{-1}^{1} g_n(z)\,e^{i\pi x^2 z}\dz,\]
where $g_n$ is a certain weakly holomorphic modular form of
weight~$3/2$, and the integral is over a semicircle in the upper
half-plane.
The anti-invariant part $a_n(x)-\ftt{a_n}(x)$ will be
defined by a similar expression. For an explicit example, we
define $a_0(x)$ by
	\[a_0(x) \= \frac{1}{4}\int_{-1}^{1}\theta^3(z)\,e^{i\pi x^2 z}\dz,\]
where $\theta(z)$ is the classical theta series
	\begin{equation}\label{eq:theta}
	\theta(z)\=\sum_{n\in\ZZ}e^{i\pi n^2 z}.
	\end{equation}
The modular transformation property of $g_n$ is chosen in such a way
that it complements the action of the Fourier transform on Gaussian
functions:
	\[\ftt{e_{z}}(\xi) \= \frac{1}{\sqrt{-iz}}\,e_{-1/z}(\xi),\]
where $e_{z}(x) = e^{i\pi z x^2}$, and the square root is chosen to be
positive when $z$ lies on the imaginary axis (this comment also applies whenever expression $(-iz)^{\alpha}$ occurs throughout the paper; note that $z$ belongs to the upper half-plane).
For instance, using the identity
	\begin{equation*}
	\theta\Big(-\frac{1}{z}\Big) \= \sqrt{-iz}\,\theta(z)
	\end{equation*}
and applying the change of variable $z\mapsto-1/z$ in the integral
that defines $a_0(x)$ we see that $\ftt{a_0}=a_0$.
The general definition of $a_n$ needs some preparation and will be given
in Section~\ref{sec:basis}. The plots of the first three functions are
shown in Figure~\ref{figure:plots}.

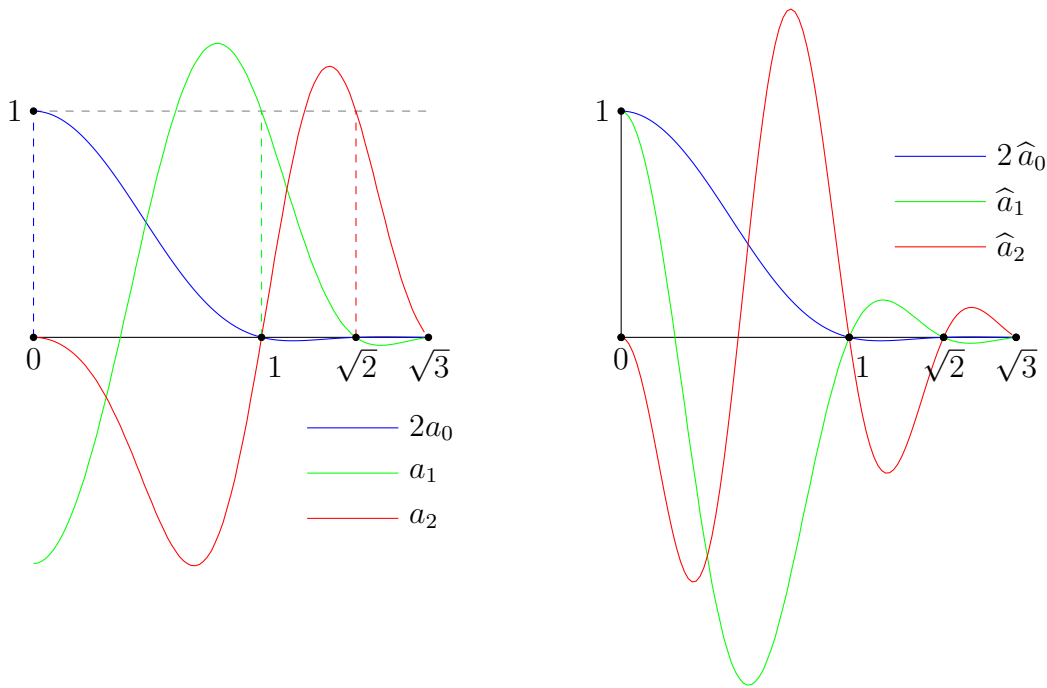
\begin{figure}
	\begin{tikzpicture}
	\begin{scope}[scale=0.6]
	\draw (0,0)--(8.6603,0) ;
	\draw [gray, dashed]
	(0,5)--(8.6603,5);
	
	\draw [blue]
	(0,5.000)--(0.08660,4.996)--(0.1732,4.985)--(0.2598,4.966)--(0.3464,4.940)--(0.4330,4.906)--(0.5196,4.865)--(0.6062,4.817)--(0.6928,4.762)--(0.7794,4.700)--(0.8660,4.632)--(0.9526,4.557)--(1.039,4.476)--(1.126,4.389)--(1.212,4.297)--(1.299,4.199)--(1.386,4.096)--(1.472,3.989)--(1.559,3.877)--(1.645,3.762)--(1.732,3.642)--(1.819,3.520)--(1.905,3.395)--(1.992,3.267)--(2.078,3.137)--(2.165,3.006)--(2.252,2.873)--(2.338,2.740)--(2.425,2.606)--(2.511,2.473)--(2.598,2.340)--(2.685,2.208)--(2.771,2.077)--(2.858,1.948)--(2.944,1.821)--(3.031,1.696)--(3.118,1.574)--(3.204,1.456)--(3.291,1.340)--(3.377,1.229)--(3.464,1.121)--(3.551,1.017)--(3.637,0.9181)--(3.724,0.8236)--(3.811,0.7338)--(3.897,0.6489)--(3.984,0.5689)--(4.070,0.4940)--(4.157,0.4242)--(4.244,0.3595)--(4.330,0.2999)--(4.417,0.2453)--(4.503,0.1956)--(4.590,0.1508)--(4.677,0.1108)--(4.763,0.07526)--(4.850,0.04415)--(4.936,0.01723)--(5.023,-0.005703)--(5.110,-0.02489)--(5.196,-0.04057)--(5.283,-0.05301)--(5.369,-0.06246)--(5.456,-0.06920)--(5.543,-0.07349)--(5.629,-0.07562)--(5.716,-0.07585)--(5.802,-0.07444)--(5.889,-0.07166)--(5.976,-0.06776)--(6.062,-0.06296)--(6.149,-0.05749)--(6.235,-0.05156)--(6.322,-0.04535)--(6.409,-0.03904)--(6.495,-0.03278)--(6.582,-0.02670)--(6.668,-0.02091)--(6.755,-0.01550)--(6.842,-0.01055)--(6.928,-0.006115)--(7.015,-0.002224)--(7.101,0.001100)--(7.188,0.003854)--(7.275,0.006048)--(7.361,0.007705)--(7.448,0.008857)--(7.534,0.009547)--(7.621,0.009821)--(7.708,0.009734)--(7.794,0.009342)--(7.881,0.008700)--(7.967,0.007867)--(8.054,0.006897)--(8.141,0.005841)--(8.227,0.004749)--(8.314,0.003661)--(8.400,0.002615)--(8.487,0.001642)--(8.574,0.0007639);
	\draw [green]
	(0,-5.000)--(0.0866,-4.988)--(0.173,-4.952)--(0.260,-4.892)--(0.346,-4.808)--(0.433,-4.701)--(0.520,-4.571)--(0.606,-4.418)--(0.693,-4.243)--(0.779,-4.047)--(0.866,-3.830)--(0.953,-3.593)--(1.04,-3.337)--(1.13,-3.063)--(1.21,-2.771)--(1.30,-2.464)--(1.39,-2.142)--(1.47,-1.806)--(1.56,-1.458)--(1.65,-1.100)--(1.73,-0.7316)--(1.82,-0.3560)--(1.91,0.02563)--(1.99,0.4117)--(2.08,0.8004)--(2.17,1.190)--(2.25,1.579)--(2.34,1.964)--(2.42,2.346)--(2.51,2.720)--(2.60,3.087)--(2.68,3.443)--(2.77,3.787)--(2.86,4.117)--(2.94,4.431)--(3.03,4.729)--(3.12,5.007)--(3.20,5.265)--(3.29,5.502)--(3.38,5.716)--(3.46,5.905)--(3.55,6.069)--(3.64,6.208)--(3.72,6.320)--(3.81,6.405)--(3.90,6.462)--(3.98,6.492)--(4.07,6.495)--(4.16,6.470)--(4.24,6.419)--(4.33,6.342)--(4.42,6.240)--(4.50,6.114)--(4.59,5.965)--(4.68,5.795)--(4.76,5.605)--(4.85,5.397)--(4.94,5.174)--(5.02,4.936)--(5.11,4.686)--(5.20,4.426)--(5.28,4.159)--(5.37,3.886)--(5.46,3.610)--(5.54,3.333)--(5.63,3.057)--(5.72,2.784)--(5.80,2.516)--(5.89,2.256)--(5.98,2.004)--(6.06,1.762)--(6.15,1.532)--(6.24,1.315)--(6.32,1.112)--(6.41,0.9242)--(6.50,0.7514)--(6.58,0.5943)--(6.67,0.4531)--(6.75,0.3278)--(6.84,0.2182)--(6.93,0.1238)--(7.01,0.04420)--(7.10,-0.02147)--(7.19,-0.07401)--(7.27,-0.1144)--(7.36,-0.1437)--(7.45,-0.1631)--(7.53,-0.1736)--(7.62,-0.1766)--(7.71,-0.1732)--(7.79,-0.1646)--(7.88,-0.1519)--(7.97,-0.1361)--(8.05,-0.1184)--(8.14,-0.09951)--(8.23,-0.08033)--(8.31,-0.06152)--(8.40,-0.04367)--(8.49,-0.02725)--(8.57,-0.01261);
	\draw [red]
	(0,0.0000999708)--(0.0866,-0.00308556)--(0.173,-0.0126792)--(0.260,-0.0287908)--(0.346,-0.0515992)--(0.433,-0.0813471)--(0.520,-0.118333)--(0.606,-0.162904)--(0.693,-0.215445)--(0.779,-0.276365)--(0.866,-0.346087)--(0.953,-0.425032)--(1.04,-0.513601)--(1.13,-0.612157)--(1.21,-0.721011)--(1.30,-0.840403)--(1.39,-0.970485)--(1.47,-1.1113)--(1.56,-1.26279)--(1.65,-1.42472)--(1.73,-1.59674)--(1.82,-1.77832)--(1.91,-1.96876)--(1.99,-2.16715)--(2.08,-2.37243)--(2.17,-2.5833)--(2.25,-2.79832)--(2.34,-3.0158)--(2.42,-3.2339)--(2.51,-3.45058)--(2.60,-3.66367)--(2.68,-3.87081)--(2.77,-4.06954)--(2.86,-4.25729)--(2.94,-4.43142)--(3.03,-4.58923)--(3.12,-4.72803)--(3.20,-4.84514)--(3.29,-4.93794)--(3.38,-5.00393)--(3.46,-5.04075)--(3.55,-5.04622)--(3.64,-5.01839)--(3.72,-4.95559)--(3.81,-4.85645)--(3.90,-4.71995)--(3.98,-4.54545)--(4.07,-4.33272)--(4.16,-4.08197)--(4.24,-3.79387)--(4.33,-3.46954)--(4.42,-3.11061)--(4.50,-2.71917)--(4.59,-2.29778)--(4.68,-1.84947)--(4.76,-1.3777)--(4.85,-0.886315)--(4.94,-0.379557)--(5.02,0.138037)--(5.11,0.661662)--(5.20,1.18631)--(5.28,1.70683)--(5.37,2.21802)--(5.46,2.71467)--(5.54,3.19167)--(5.63,3.64408)--(5.72,4.06717)--(5.80,4.45657)--(5.89,4.8083)--(5.98,5.1188)--(6.06,5.38509)--(6.15,5.60473)--(6.24,5.77591)--(6.32,5.89749)--(6.41,5.96899)--(6.50,5.99064)--(6.58,5.96335)--(6.67,5.8887)--(6.75,5.76893)--(6.84,5.6069)--(6.93,5.40598)--(7.01,5.17008)--(7.10,4.9035)--(7.19,4.6109)--(7.27,4.29718)--(7.36,3.96739)--(7.45,3.62667)--(7.53,3.28009)--(7.62,2.93262)--(7.71,2.589)--(7.79,2.25367)--(7.88,1.9307)--(7.97,1.62371)--(8.05,1.33582)--(8.14,1.06962)--(8.23,0.827121)--(8.31,0.609746)--(8.40,0.418346)--(8.49,0.253198)--(8.57,0.114038);
	\draw [blue, dashed]
	(0,0)--(0,5);
	\draw [green, dashed]
	(5,0)--(5,5);
	\draw [red, dashed]
	(7.0711,0)--(7.0711,5);
	\filldraw (0,5) circle (2pt) node[left] {$1$};
	\filldraw (0,0) circle (2pt) node[align=center,below] {$0$};
	\filldraw (5,0) circle (2pt) node[align=center,below,xshift=5,yshift=-3] {$1$};
	\filldraw (7.0711,0) circle (2pt) node[align=center,below] {$\sqrt{2}$};
	\filldraw (8.6603,0) circle (2pt) node[align=center,below] {$\sqrt{3}$};
	\filldraw (8,-2) node[right] {$2a_0$};
	\draw [blue]
	(6,-2)--(8,-2);
	\filldraw (8,-3) node[right] {$a_1$};
	\draw [green]
	(6,-3)--(8,-3);
	\filldraw (8,-4) node[right] {$a_2$};
	\draw [red]
	(6,-4)--(8,-4);
	\end{scope}
	
	\begin{scope}[xshift=220,scale=0.6]
	\draw (0,0)--(8.6603,0) ;
	\draw (0,0)--(0,5) ;
	
	\draw [blue]
	(0,5.000)--(0.08660,4.996)--(0.1732,4.985)--(0.2598,4.966)--(0.3464,4.940)--(0.4330,4.906)--(0.5196,4.865)--(0.6062,4.817)--(0.6928,4.762)--(0.7794,4.700)--(0.8660,4.632)--(0.9526,4.557)--(1.039,4.476)--(1.126,4.389)--(1.212,4.297)--(1.299,4.199)--(1.386,4.096)--(1.472,3.989)--(1.559,3.877)--(1.645,3.762)--(1.732,3.642)--(1.819,3.520)--(1.905,3.395)--(1.992,3.267)--(2.078,3.137)--(2.165,3.006)--(2.252,2.873)--(2.338,2.740)--(2.425,2.606)--(2.511,2.473)--(2.598,2.340)--(2.685,2.208)--(2.771,2.077)--(2.858,1.948)--(2.944,1.821)--(3.031,1.696)--(3.118,1.574)--(3.204,1.456)--(3.291,1.340)--(3.377,1.229)--(3.464,1.121)--(3.551,1.017)--(3.637,0.9181)--(3.724,0.8236)--(3.811,0.7338)--(3.897,0.6489)--(3.984,0.5689)--(4.070,0.4940)--(4.157,0.4242)--(4.244,0.3595)--(4.330,0.2999)--(4.417,0.2453)--(4.503,0.1956)--(4.590,0.1508)--(4.677,0.1108)--(4.763,0.07526)--(4.850,0.04415)--(4.936,0.01723)--(5.023,-0.005703)--(5.110,-0.02489)--(5.196,-0.04057)--(5.283,-0.05301)--(5.369,-0.06246)--(5.456,-0.06920)--(5.543,-0.07349)--(5.629,-0.07562)--(5.716,-0.07585)--(5.802,-0.07444)--(5.889,-0.07166)--(5.976,-0.06776)--(6.062,-0.06296)--(6.149,-0.05749)--(6.235,-0.05156)--(6.322,-0.04535)--(6.409,-0.03904)--(6.495,-0.03278)--(6.582,-0.02670)--(6.668,-0.02091)--(6.755,-0.01550)--(6.842,-0.01055)--(6.928,-0.006115)--(7.015,-0.002224)--(7.101,0.001100)--(7.188,0.003854)--(7.275,0.006048)--(7.361,0.007705)--(7.448,0.008857)--(7.534,0.009547)--(7.621,0.009821)--(7.708,0.009734)--(7.794,0.009342)--(7.881,0.008700)--(7.967,0.007867)--(8.054,0.006897)--(8.141,0.005841)--(8.227,0.004749)--(8.314,0.003661)--(8.400,0.002615)--(8.487,0.001642)--(8.574,0.0007639);
	\draw [green]
	(0,5.000)--(0.0866,4.968)--(0.173,4.873)--(0.260,4.716)--(0.346,4.499)--(0.433,4.223)--(0.520,3.892)--(0.606,3.510)--(0.693,3.079)--(0.779,2.605)--(0.866,2.093)--(0.953,1.547)--(1.04,0.9736)--(1.13,0.3786)--(1.21,-0.2321)--(1.30,-0.8523)--(1.39,-1.476)--(1.47,-2.097)--(1.56,-2.709)--(1.65,-3.306)--(1.73,-3.883)--(1.82,-4.433)--(1.91,-4.953)--(1.99,-5.438)--(2.08,-5.882)--(2.17,-6.284)--(2.25,-6.639)--(2.34,-6.944)--(2.42,-7.198)--(2.51,-7.400)--(2.60,-7.548)--(2.68,-7.643)--(2.77,-7.684)--(2.86,-7.672)--(2.94,-7.610)--(3.03,-7.498)--(3.12,-7.340)--(3.20,-7.138)--(3.29,-6.897)--(3.38,-6.618)--(3.46,-6.307)--(3.55,-5.968)--(3.64,-5.604)--(3.72,-5.222)--(3.81,-4.824)--(3.90,-4.416)--(3.98,-4.002)--(4.07,-3.588)--(4.16,-3.176)--(4.24,-2.771)--(4.33,-2.376)--(4.42,-1.996)--(4.50,-1.633)--(4.59,-1.291)--(4.68,-0.9703)--(4.76,-0.6743)--(4.85,-0.4042)--(4.94,-0.1611)--(5.02,0.05437)--(5.11,0.2419)--(5.20,0.4016)--(5.28,0.5339)--(5.37,0.6398)--(5.46,0.7204)--(5.54,0.7770)--(5.63,0.8114)--(5.72,0.8256)--(5.80,0.8214)--(5.89,0.8012)--(5.98,0.7671)--(6.06,0.7214)--(6.15,0.6664)--(6.24,0.6043)--(6.32,0.5373)--(6.41,0.4672)--(6.50,0.3961)--(6.58,0.3257)--(6.67,0.2573)--(6.75,0.1925)--(6.84,0.1321)--(6.93,0.07716)--(7.01,0.02828)--(7.10,-0.01408)--(7.19,-0.04970)--(7.27,-0.07851)--(7.36,-0.1007)--(7.45,-0.1164)--(7.53,-0.1262)--(7.62,-0.1306)--(7.71,-0.1301)--(7.79,-0.1255)--(7.88,-0.1175)--(7.97,-0.1067)--(8.05,-0.09399)--(8.14,-0.07995)--(8.23,-0.06527)--(8.31,-0.05052)--(8.40,-0.03622)--(8.49,-0.02282)--(8.57,-0.01066);
	\draw [red]
	(0,0.0000100292)--(0.08660,-0.0377607)--(0.1732,-0.150067)--(0.2598,-0.333918)--(0.3464,-0.584403)--(0.4330,-0.89481)--(0.5196,-1.25678)--(0.6062,-1.66052)--(0.6928,-2.09501)--(0.7794,-2.54828)--(0.8660,-3.00767)--(0.9526,-3.46019)--(1.039,-3.89274)--(1.126,-4.29249)--(1.212,-4.64714)--(1.299,-4.94526)--(1.386,-5.17652)--(1.472,-5.33195)--(1.559,-5.40416)--(1.645,-5.38756)--(1.732,-5.27841)--(1.819,-5.07502)--(1.905,-4.77772)--(1.992,-4.38892)--(2.078,-3.91304)--(2.165,-3.35643)--(2.252,-2.72724)--(2.338,-2.03525)--(2.425,-1.29166)--(2.511,-0.508857)--(2.598,0.299874)--(2.685,1.12061)--(2.771,1.93908)--(2.858,2.74102)--(2.944,3.51241)--(3.031,4.23983)--(3.118,4.91069)--(3.204,5.51354)--(3.291,6.03828)--(3.377,6.47636)--(3.464,6.82095)--(3.551,7.06707)--(3.637,7.2117)--(3.724,7.25375)--(3.811,7.19415)--(3.897,7.03575)--(3.984,6.78323)--(4.070,6.44298)--(4.157,6.02298)--(4.244,5.53252)--(4.330,4.98205)--(4.417,4.3829)--(4.503,3.74705)--(4.590,3.08683)--(4.677,2.41467)--(4.763,1.74284)--(4.850,1.08317)--(4.936,0.446836)--(5.023,-0.15588)--(5.110,-0.715767)--(5.196,-1.22485)--(5.283,-1.67655)--(5.369,-2.06571)--(5.456,-2.38872)--(5.543,-2.64352)--(5.629,-2.82957)--(5.716,-2.94778)--(5.802,-3.00047)--(5.889,-2.99124)--(5.976,-2.92478)--(6.062,-2.80679)--(6.149,-2.64373)--(6.235,-2.44266)--(6.322,-2.21103)--(6.409,-1.9565)--(6.495,-1.6867)--(6.582,-1.40909)--(6.668,-1.13076)--(6.755,-0.858295)--(6.842,-0.597617)--(6.928,-0.353892)--(7.015,-0.131445)--(7.101,0.0663028)--(7.188,0.236858)--(7.275,0.378656)--(7.361,0.491038)--(7.448,0.574203)--(7.534,0.629135)--(7.621,0.657514)--(7.708,0.661619)--(7.794,0.644207)--(7.881,0.608396)--(7.967,0.557541)--(8.054,0.49511)--(8.141,0.424568)--(8.227,0.349269)--(8.314,0.272359)--(8.400,0.196691)--(8.487,0.124759)--(8.574,0.0586501);
	\filldraw (0,0) circle (2pt);
	\filldraw (5,0) circle (2pt);
	\filldraw (7.0711,0) circle (2pt);
	\filldraw (8.6603,0) circle (2pt);
	\filldraw (0,5) circle (2pt) node[left] {$1$};
	\filldraw (0,0) circle (2pt) node[align=center, below] {$0$};
	\filldraw (5,0) circle (2pt) node[align=center,below,xshift=5,yshift=-3] {$1$};
	\filldraw (7.0711,0) circle (2pt) node[align=center, below] {$\sqrt{2}$};
	\filldraw (8.6603,0) circle (2pt) node[align=center, below] {$\sqrt{3}$};
	\filldraw (8,4) node[right] {$2\,\widehat{a}_0$};
	\draw [blue]
	(6,4)--(8,4);
	\filldraw (8,3) node[right] {$\widehat{a}_1$};
	\draw [green]
	(6,3)--(8,3);
	\filldraw (8,2) node[right] {$\widehat{a}_2$};
	\draw [red]
	(6,2)--(8,2);
	\end{scope}
	\end{tikzpicture}
		\caption{Plots of $a_n$ and $\ftt{a_n}$ for $n=0,1,2$.}
		\label{figure:plots}
	\end{figure}

An analogue of Theorem~\ref{thm:summation} holds also for odd Schwartz
functions, but we postpone its formulation until Section~\ref{sec:odd}.
It is possible to combine the two results into a general interpolation
theorem, but it is more convenient to work with the two cases separately.

\textbf{Remark.} Another way to interpret equation~\eqref{eq:summation} is to
think of it as a ``deformation'' of the classical Poisson summation formula
\begin{equation}\label{eq:poisson}
	\sum_{n\in\ZZ} f(n) \= \sum_{n\in\ZZ} \ftt{f}(n),\end{equation}
which will be a special case of~\eqref{eq:summation} for $x=0$
(more precisely, $-a_{n^2}(0) = \ftt{a_{n^2}}(0) = 1$ for~$n\ge 1$,
$a_0(0) = \ftt{a_0}(0) = 1/2$, and all other values are zero).
Note also that equation~\eqref{eq:summation} gives a continuous family
of measures $\mu_x$ such that $\mu_x$ is a tempered distribution,
and both $\mu_x$ and $\ftt{\mu_x}$ have locally finite support. Such
measures are called crystalline measures, for general discussion
and some interesting examples see~\cite{Me},~\cite{Guin}.

Our general approach fits into the framework of Eichler cohomology
(see~\cite{Eich}; some relevant results can also be found
in~\cite{Kn1} and~\cite{Kn2}) but for the most part we avoid using its
general results and terminology. In our case we prefer to obtain
explicit estimates by direct methods, and this also allows us to keep
the proofs relatively self-contained.

Let us also remark that functions with properties similar to that
of $a_n$ have recently been used in~\cite{Vi} and~\cite{CKMRV} to solve
the sphere packing problem in dimensions~8 and~24. The functions
constructed there, motivated by the Cohn-Elkies optimization problem
~\cite{CE}, were also solutions to a very special case of an
interpolation problem closely related to~\eqref{eq:summation} that
also involved the values of the first derivative. Similarly, in the
Paley-Wiener space, an analogue of~\eqref{eq:pw_interpolation}
for second-order interpolation (i.e., interpolation of values of the
function and the values of its first derivative) plays important role
in optimization problems of Beurling and Selberg, see~\cite{Va}.

The paper is organized as follows. In Section~\ref{sec:background}
we recall some known facts about modular forms for the theta
group~$\Gamma_{\theta}$. In Section~\ref{sec:mf} we compute an
explicit basis of a certain space of weakly holomorphic modular
forms of weight~$3/2$ for the group~$\Gamma_{\theta}$.
Then, in Section~\ref{sec:basis} we use these modular forms to
construct an interpolation basis for the even Schwartz functions
and prove some of its properties. In the next section we prove an estimate
on the growth of this sequence functions; this is by far the most
technical part of the paper.
In Section~\ref{sec:funcan} we prove the main result for even
functions, and in Section~\ref{sec:odd} we define the interpolation basis and formulate corresponding statements for the odd functions.

\subsection*{Acknowledgements.}
The authors would like to thank Max Planck Institute for Mathematics, Bonn for hospitality and support while this paper was being written.
The first named author would like to thank The Absus Salam International Centre for Theoretical Physics, Trieste for financial support.
The second named author would also like to thank the Berlin Mathematical School for financial support and excellent research environment.

The authors are grateful to Andrew Bakan, Andriy Bondarenko,
Emanuel Carneiro, Yves Meyer, Don Zagier, and the anonymous referee
for many helpful remarks and comments. The second named author is grateful to Andriy Bondarenko for sharing his conjecture about the existence of the interpolation formula.

\section{The theta group}
\label{sec:background}
In this section we set up notation and collect facts about the theta
group and related modular forms.
Most of the material from this section can be found, in much greater
detail, in~\cite{Mu}. For a motivated general introduction to the
theory of modular forms, see~\cite{Za2}.

\subsection{Upper half-plane and the action of $\SL_{2}(\RR)$}
Denote by~$\HH$ the complex upper half-plane
$\{z\in\CC \: \Im(z) > 0\}$. The group $\SL_2(\RR)$ of $2\times 2$
matrices with real coefficients and determinant~$1$ acts on the upper
half-plane on the left by Moebius transformations
	\[\gamma z \= \frac{az+b}{cz+d},\quad \gamma =
    \pmat abcd\in \SL_{2}(\RR).\]
The kernel of this action coincides with the center $\{\pm I\}$
of $\SL_{2}(\RR)$ and thus we can work with the action
of $\PSL_2(\RR) = \SL_2(\RR)/\{\pm I\}$ instead.

We will use special notation for the following elements
of $\SL_2(\ZZ)$ (or, by abuse of notation, of $\PSL_2(\ZZ)$):
	\begin{equation*}
	I \= \pmat 1001,\quad T \= \pmat 1101,\quad S \= \pmat 0{-1}10.
	\end{equation*}

Recall that $\Gamma(2) \subset \SL_2(\ZZ)$ is defined as
	\begin{equation*}
	\Gamma(2) \= \bigg\{A \in \SL_2(\ZZ)
	\ \bigg| \ A \eqs \pmat 1001 \Mod{2}\bigg\},
	\end{equation*}
and the theta group $\Gamma_{\theta}$ is the subgroup
of $\SL_2(\ZZ)$ generated by~$S$ and~$T^2$, or, equivalently,
% (see \cite{Kn}*{Cor.~4})
	\begin{equation*}
		\Gamma_{\theta} \= \bigg\{A \in \SL_2(\ZZ)
		\ \bigg| \ A \eqs \pmat 1001\ \mbox{or}\ \pmat 0110 \Mod{2}\bigg\}.
	\end{equation*}
Note the obvious inclusions $\SL_{2}(\ZZ) \supset \Gamma_{\theta} \supset \Gamma(2)$.
The group $\Gamma(2)$ has three cusps $0$, $1$, and $\infty$, while the group
$\Gamma_{\theta}$ has only two cusps: $1$ and $\infty$. The standard
fundamental domain for the theta group is
(see Figure~\ref{figure:fundamental_domain})
	\begin{equation} \label{eq:funddom}
	\mathcal{D} \= \{\tau\in\HH \: |\tau|>1,\, \Re(\tau) \in (-1,1)\}.
	\end{equation}
	
\begin{figure}
    \centering
    \begin{tikzpicture}
    \definecolor{cv0}{rgb}{0.95,0.95,0.95}
    \begin{scope}[scale=1]
    \clip(-7,-0.5) rectangle (7,6);
    \draw (-3,0)node[below] {$-1$};
    \draw (3,0) node[below] {$1$};
    \draw[lightgray] (-10,0) -- (10,0);
    \draw[lightgray] (9,0) --  (9,5.5);
    \draw[lightgray] (-9,0)  --  (-9,5.5);

    \foreach\i in {-6,0,6} {
    \begin{scope}[xshift=\i cm]
    %\fill[color=cv0] (-3,0) arc (180:0:1) arc (180:0:0.5) arc    (0:180:1.5);
    %\fill[color=cv0] (3,0) arc (0:180:1) arc (0:180:0.5) arc    (180:0:1.5);
    \draw[lightgray] (-3,0) arc (180:0:1) arc (180:0:0.5) arc    (0:180:1.5);
    \draw[lightgray] (3,0) arc (0:180:1) arc (0:180:0.5) arc    (180:0:1.5);
    \draw[lightgray] (-3,0) arc (180:0:0.75) arc (180:0:0.25) arc (180:0:0.2) arc (180:0:0.3) arc (180:0:0.3) arc (180:0:0.2) arc (180:0:0.25) arc (180:0:0.75);
    %\draw[lightgray] (-3,0) arc (180:0:0.6) arc (180:0:0.15) arc (180:0:0.107) arc (180:0:0.143) arc (180:0:0.125) arc (180:0:0.075) arc (180:0:0.085) arc (180:0:0.215) arc (180:0:0.215) arc (180:0:0.085) arc (180:0:0.075) arc (180:0:0.125) arc (180:0:0.143) arc (180:0:0.107) arc (180:0:0.15) arc (180:0:0.6);
    \end{scope}
    }

    \fill[color=cv0] (-3,0) arc (180:0:3) -- (3,5.5) -- (-3,5.5);
    \draw[lightgray] (3,0)  --  ( 3,5.5);
    \draw[lightgray] (-3,0)  -- (-3,5.5);
    \draw[lightgray] (3,0) arc (180:0:3);
    \draw[lightgray] (-3,0) arc (0:180:3);
    \draw[lightgray] (3,0) arc (0:180:3);
    %\draw[lightgray] (3,0) arc (0:180:1.5);
    %\draw[lightgray] (-3,0) arc (180:0:1.5);
    %\draw (-1.95,1.6) node[above]{$\mathcal{F}$};
    \draw (0,4.5) node[above]{$\mathcal{D}$};
    \end{scope}
    \end{tikzpicture}
    \caption{Fundamental domain for $\Gamma_{\theta}$.}
    \label{figure:fundamental_domain}
\end{figure}

Finally, we are going to use the ``$\theta$-automorphy factor'' on the group $\Gamma_{\theta}$,
which we define for all $z\in\HH$ and $\gamma\in\Gamma_{\theta}$ by
	\begin{equation}
	j_{\theta}(z, \gamma) \= \frac{\theta(z)}{\theta(\gamma z)}.
	\end{equation}
From the definition it immediately follows that $j_{\theta}(z,\gamma_1\gamma_2)=j_{\theta}(z,\gamma_2)j_{\theta}(\gamma_2z,\gamma_1)$, so $j_{\theta}$ is indeed an automorphy factor on $\Gamma_{\theta}$.
We have $j_{\theta}(z, T^2) = 1$ and $j_{\theta}(z, S) = (-iz)^{-1/2}$, and in general we have $j_{\theta}(z, (\smat abcd)) = \zeta\cdot(cz+d)^{-1/2}$ for some suitable $8$-th root of unity $\zeta$ (an explicit expression for~$\zeta$ can be found in~\cite{Mu}*{Th.~7.1}).
Using this automorphy factor we define the following slash operator
in weight $k/2$ (that acts on holomorphic functions defined on the upper
half-plane~$\HH$)
	\begin{equation} \label{eq:slash1}
	\big(f|_{k/2}A\big)(z) \= j_{\theta}(z,A)^kf\Big(\frac{az+b}{cz+d}\Big),
	\end{equation}
where $A=(\smat abcd)\in\Gamma_{\theta}$. More generally, for $\eps\in\{-,+\}$ define a
slash operator $|_{k/2}^{\eps}$ by
	\begin{equation} \label{eq:slash2}
	f|_{k/2}^{\eps}A \= \chi_{\eps}(A)f|_{k/2}A,
	\end{equation}
where $\chi_{\eps}\colon\Gamma_{\theta}\to\{\pm1\}$ is the homomorphism defined
by $\chi_{\eps}(S)=\eps$ and $\chi_{\eps}(T^2)=1$. The slash operator defines
a group action, that is, $f|AB = (f|A)|B$. Another fact that we will use is
that for all $(\smat abcd)\in\SL_{2}(\RR)$ we have
	\begin{equation} \label{eq:iminv}
	\Im\Big(\frac{a\tau+b}{c\tau+d}\Big) \= \frac{\Im(\tau)}{|c\tau+d|^2}.
	\end{equation}

For any real number~$a$ we will denote by~$q^{a}$ the analytic function
	\[q^{a} \= q^{a}(z) \= \exp(2\pi i az).\]
Any $N$-periodic holomorphic function on $\HH$ admits an expansion in powers of $q^{1/N}$
(in general as a Laurent series, but in our case such expansions will have only finitely
many negative powers). We will be using subscripts to indicate the main
variable of~$q$, i.e., $q_{\tau}^a$ is the same as~$q^a(\tau)$; by default
the variable of $q^a$ is~$z$.

\subsection{Modular forms for the group $\Gamma_{\theta}$}
We begin by defining
the classical Jacobi theta series (the so-called Thetanullwerte):
\begin{align*}
	\thV(z) &\= \sum_{n\in\ZZ+\frac12}q^{\frac12n^2} \= 2\frac{\eta(2z)^2}{\eta(z)},\\
	\thU(z) &\= \sum_{n\in\ZZ}q^{\frac12n^2} \= \frac{\eta(z)^5}{\eta(z/2)^2\eta(2z)^2} \quad (=\theta(z)),\\
	\thW(z) &\= \sum_{n\in\ZZ}(-1)^nq^{\frac12n^2} \= \frac{\eta(z/2)^2}{\eta(z)},
\end{align*}
where $\eta(z) = q^{1/24}\prod_{n\ge 1}(1-q^n)$ is the Dedekind eta function.
The functions $\thV^4$, $\thU^4$, and $\thW^4$ generate the ring of
holomorphic modular forms on $\Gamma(2)$ and satisfy the Jacobi identity
\begin{equation} \label{eq:threeterm}
	\thU^4 \= \thV^4 + \thW^4.
\end{equation}
The $q$-expansions of these forms at the cusp $i\infty$ are as follows:
\begin{align*}
	\thV^4(z) &\= 16q^{1/2}+64q^{3/2}+96q^{5/2}+O(q^3),\\
	\thU^4(z) &\= 1+8q^{1/2}+24q+32q^{3/2}+24q^2+48q^{5/2}+O(q^3),\\
	\thW^4(z) &\= 1-8q^{1/2}+24q-32q^{3/2}+24q^2-48q^{5/2}+O(q^3).
\end{align*}
Under the action of $\SL_2(\ZZ)$ the theta functions transform as follows.
Under the action of~$S$ we have
\begin{equation}\label{eq:transformS}
	\begin{split}
		(-iz)^{-1/2}\thV(-1/z) &\= \thW(z),\\
		(-iz)^{-1/2}\thU(-1/z) &\= \thU(z),\\
		(-iz)^{-1/2}\thW(-1/z) &\= \thV(z),
	\end{split}
\end{equation}
and under the action of $T$ we have
\begin{equation}\label{eq:transformT}
	\begin{split}
		\thV(z+1) &\= e^{i\pi/4}\thV(z),\\
		\thU(z+1) &\= \thW(z),\\
		\thW(z+1) &\= \thU(z)
	\end{split}
\end{equation}
%(see \cite{NIST}*{20.7.27-29~and~20.7.31-33,~p.531}).
Together with the $q$-series for $\thV$, $\thU$, and $\thW$,
these transformations allow us to compute the~$q$-series expansion
of any expression in theta functions at any of the three cusps
of~$\Gamma(2)$.

Using these theta functions we can define the classical modular lambda invariant
    \[\lambda(z) \= \frac{\thV^4(z)}{\thU^4(z)}\= 16q^{1/2}-128q+704q^{3/2}+\dots\ ,\]
which is a Hauptmodul for $\Gamma(2)$. In particular, we have
    \[\lambda\Big(\frac{az+b}{cz+d}\Big) \= \lambda(z),\quad
    \pmat abcd \eqs \pmat 1001 \Mod{2},\]
and any meromorphic function with these transformation properties and
with appropriate behavior at the cusps can
be expressed as a rational function of $\lambda$.
From~\eqref{eq:threeterm} -- \eqref{eq:transformT} we see that under the action
of $\PSL_2(\ZZ)$ the function $\lambda(z)$ transforms as follows:
\begin{equation}\label{eq:transformlambda}
	\begin{split}
		\lambda\Big(-\frac{1}{z}\Big) &\= 1-\lambda(z),\\
		\lambda(z+1) &\= \frac{\lambda(z)}{\lambda(z)-1}.
	\end{split}
\end{equation}
Since $\thU$, $\thV$, and $\thW$ do not vanish in $\HH$ (by the product
expression in terms of $\eta(z)$), we get the well-known fact
that $\lambda(z)$ omits the values $0$ and $1$.

Using $\lambda(z)$, define a Hauptmodul $\jth$ for the group $\Gamma_{\theta}$
\begin{equation}\label{eq:jtheta}
	\jth(z) \= \frac{1}{16}\lambda(z)(1-\lambda(z)) \= \frac{\thV^4(z)\thW^4(z)}{16\thU^8(z)} \=
	q^{1/2}-24q+300q^{3/2}+\dots\ .
\end{equation}
Note that $\jth(z)=\eta(z/2)^{24}\eta(2z)^{24}\eta(z)^{-48}$,
hence it does not have zeros in $\HH$.
This function satisfies the transformation laws
\begin{equation*}\label{eq:transformjtheta}
	\begin{split}
		\jth\Big(-\frac{1}{z}\Big) &\= \jth(z),\\
		\jth(z+2) &\= \jth(z),
	\end{split}
\end{equation*}
and it maps the fundamental domain $\mathcal{D}$
conformally onto the cut plane $\CC\sm [1/64,+\infty)$.
Finally, note that $1/\jth$ vanishes at the cusp $1$ since a simple
calculation shows that
	\begin{equation}\label{eq:jthetacusp}
	\frac{1}{\jth(1-1/z)} \= -4096q-98304q^2+O(q^3).
	\end{equation}
\subsection{Asymptotic notation.}
We freely use the standard big $O$ notation. In addition, we also use
Vinogradov's ``$\ll$'' sign
	\[f \ll_{\eps,\delta,\dots} g \quad\Leftrightarrow\quad f = O_{\eps,\delta,\dots}(g).\]
Notationally, we prefer to use ``$O$'' for sequences and additive
remainders, while for most inequalities with implied constants we
use~``$\ll$''.

\section{Weakly holomorphic modular forms on $\Gamma_{\theta}$ of weight $3/2$}
\label{sec:mf}
We begin by constructing a basis
for a certain space of weakly holomorphic modular forms of weight $3/2$.
Namely, let $\{g_n^{+}(z)\}_{n\ge0}$ and $\{g_n^{-}(z)\}_{n\ge1}$ be
two collections of holomorphic functions on the upper half-plane $\HH$
that satisfy the transformation properties
	\begin{align} \label{cond:trans}
	\begin{split}
	g_n^{\eps}(z+2) &\= g_n^{\eps}(z),\\
	(-iz)^{-3/2}g_n^{\eps}(-1/z) &\= \eps g_n^{\eps}(z),
	\end{split}
	\end{align}
as well as the following behavior at the cusps
	\begin{align} \label{cond:asympt}
	\begin{split}
	&g_n^{+}(z) \= q^{-n/2}+O(q^{1/2}),\; z\to i\infty,\\
	&g_n^{-}(z) \= q^{-n/2}+O(1),\; z\to i\infty,\\
	&g_n^{\eps}(1+i/t) \to 0,\; t\to\infty.
	\end{split}
	\end{align}
The reason behind these conditions will be made clear in the next section. We
make the following ansatz:
	\begin{align} \label{mod:ansatz}
	\begin{split}
	g_n^{+}(z)& \= \theta^3(z)P_n^{+}(\jth^{-1}(z)),\\
	g_n^{-}(z)& \= \theta^3(z)(1-2\lambda(z))P_n^{-}(\jth^{-1}(z)),
	\end{split}
	\end{align}
where $P_n^{\pm}\in\QQ[x]$ are monic polynomials of degree $n$
and $P_n^{-}(0)=0$.
The polynomials $P_n^{\pm}$ are uniquely determined by the first two
conditions in~\eqref{cond:asympt}, since $\jth^{-1}$ has
$q$-expansion starting with~$q^{-1/2}+24+O(q^{1/2})$, and thus the coefficients
of $P_n^{\pm}$ can be found by inverting an upper-triangular matrix.
The transformation properties~\eqref{cond:trans}
follow from the properties of~$\jth(z)$ and~$\lambda(z)$.
The first few of these functions are
	\begin{align*}
	\begin{split}
	g_0^{+}& \= \theta^3,\\
	g_1^{+}& \= \theta^3\cdot(\jth^{-1}-30),\\
	g_2^{+}& \= \theta^3\cdot(\jth^{-2}-54\jth^{-1}+192),
	\end{split}
	\quad
	\begin{split}
	g_1^{-}& \= \theta^3\cdot(1-2\lambda)\cdot(\jth^{-1}),\\
	g_2^{-}& \= \theta^3\cdot(1-2\lambda)\cdot(\jth^{-2}-22\jth^{-1}),\\
	g_3^{-}& \= \theta^3\cdot(1-2\lambda)\cdot(\jth^{-3}-46\jth^{-2}+252\jth^{-1}).
	\end{split}
	\end{align*}
Note that the polynomials $P_{n}^{+}$ are the Faber polynomials associated to the function~$1/J$, viewed as a function on the unit disk (see~\cite{Cu}).
%The main difference here is that the group $\Gamma_{\theta}$ has two cusps, so some additional normalization is needed.
In the next theorem we give closed form expressions for
generating functions of $\{g_{n}^{\pm}\}$.
\begin{theorem}
	The generating functions for $\{g_n^{+}(z)\}_{n\ge0}$
	and $\{g_n^{-}(z)\}_{n\ge1}$ are given by
	\begin{align} \label{eq:kernels}
	\begin{split}
	\sum_{n=0}^{\infty} g_n^{+}(z)e^{i\pi n\tau}
	\= \frac{\theta(\tau)(1-2\lambda(\tau))\theta^3(z)\jth(z)}{\jth(z)-\jth(\tau)}
	 \es{=:} K_{+}(\tau, z),\\
	\sum_{n=1}^{\infty} g_n^{-}(z)e^{i\pi n\tau}
	\= \frac{\theta(\tau)\jth(\tau)\theta^3(z)(1-2\lambda(z))}{\jth(z)-\jth(\tau)}
	\es{=:} K_{-}(\tau, z).
	\end{split}
	\end{align}
    Here $K_{\pm}(\tau,z)$ is a meromorphic function with poles at
    $\tau\in\Gamma_{\theta}z$, and the series on the left-hand side
    converges for all large enough $\Im(\tau)$.
\end{theorem}
\begin{proof}
	The proof follows the same lines as the proof of Theorem~2 from~\cite{DJ}.
	We only prove the statement for $g_n^{+}$, since
	the case of~$g_n^{-}$ is almost identical. From the $q$-expansion of~$\jth^{-1}$ and the fact
	that
	\[
	\frac{\jth(z)}{\jth(z)-\jth(\tau)} \= \sum_{n\ge 0} \jth^n(\tau)\jth^{-n}(z),
	\]
	it is clear that the $g_n^{+}$ defined by~\eqref{eq:kernels} are also
	of the form $\theta^3(z) P_n(\jth^{-1}(z))$ for some monic polynomial $P_n$ of degree $n$. The only thing that we need to check is that they satisfy
	\[
	g_n^{+}(z) \= q^{-n/2}+O(q^{1/2}),\; z\to i\infty,
	\]
	or, equivalently, that $P_n=P_n^{+}$.
	By Cauchy's theorem we know that
	\[
	g_n^{+}(z) \= \frac{1}{2}\int_{\tau_0}^{\tau_0+2}
	K_{+}(\tau,z)q_{\tau}^{-n/2}d\tau
	\= \frac{1}{2\pi i}\oint_{C}
	K_{+}(\tau,z)q_{\tau}^{-(n+1)/2}d(q_\tau^{1/2}),
	\]
	where $\tau_0\in\HH$ has sufficiently large imaginary part and $C$
	is a small enough loop around~$0$ in the $q_{\tau}^{1/2}$-plane.
	Using the identity
	\begin{equation}\label{eq:diff}
	q_{\tau}^{1/2}\frac{d\jth}{d(q_{\tau}^{1/2})}(\tau) \=
	\frac{\jth'(\tau)}{\pi i} \= \theta^4(\tau)(1-2\lambda(\tau))\jth(\tau)
	\end{equation}
	we get that
	\[
	K_{+}(\tau,z) \= \frac{q_{\tau}^{1/2}\frac{d\jth}{d(q_{\tau}^{1/2})}(\tau)}{\jth(z)-\jth(\tau)}\cdot\frac{\theta^3(z)\jth(z)}{\theta^3(\tau)\jth(\tau)},
	\]
	and thus changing the variable of integration we get
	\[
	g_n^{+}(z) \= \frac{1}{2\pi i}\oint_{\tilde C}
	\frac{(q_{\tau}^{1/2}(j))^{-n}}{\jth(z)-j}\cdot\frac{\theta^3(z)\jth(z)}{\theta^3(\tau)j}dj.
	\]
	(We write $q_{\tau}^{1/2}(j)$ to emphasized dependence on $j$.)Now recall that $\theta^3(z)P_n^{+}(\jth^{-1}(z)) = q^{-n/2}+O(q^{1/2})$, so that $(\theta^3(\tau)P_n^{+}(j^{-1})-q_{\tau}^{-n/2}(j))/j$ is holomorphic in some small neighborhood of $0$ in the $j$-plane. Therefore, for some small loop $\tilde C$ around zero, we have
	\begin{multline*}
	g_n^{+}(z) \= \frac{1}{2\pi i}\oint_{\tilde C}
	\frac{(q_{\tau}^{1/2}(j))^{-n}}{\jth(z)-j}\cdot\frac{\theta^3(z)\jth(z)}{\theta^3(\tau)j}dj
	\=
	\frac{\theta^3(z)}{2\pi i}\oint_{\tilde C}
	\frac{P_n^{+}(j^{-1})}{j\big(\jth(z)-j\big)}{\jth(z)}dj
	\\
	\=
	-\frac{\theta^3(z)}{2\pi i}\oint_{\tilde C}
	\frac{P_n^{+}(j^{-1})}{\big(j^{-1}-\jth^{-1}(z)\big)}dj^{-1}
	\= \theta^3(z)P_n^{+}(\jth^{-1}(z)).
	\end{multline*}
	The last sign is changed since the contour
	for $j^{-1}$ in the last application of Cauchy's formula has the
	opposite orientation.
\end{proof}
\textbf{Remark.} From~\eqref{eq:diff} it also follows that
$K_{\eps}(\tau,z)$ has a simple pole at~$z=\tau$ with residue~$\frac1{i\pi}$
for all $\tau\in\HH$. We also record here the following identities
for~$K_{\eps}$:
	\begin{equation}
	\label{eq:kerneltrans}
	\begin{split}
	K_{\eps}(\tau,-1/z)& \= \eps(-iz)^{3/2} K_{\eps}(\tau,z),\\
	K_{\eps}(-1/\tau,z)& \= -\eps(-i\tau)^{1/2} K_{\eps}(\tau,z).
	\end{split}
	\end{equation}
Note that generating functions very similar to~\eqref{eq:kernels}
have also been used in~\cite{Za1} in the computation of traces of singular
moduli.

\section{Interpolation basis for even functions}
\label{sec:basis}
Let us define a function $b_m^{\eps}\colon\RR\to\RR$ by the integral
	\begin{equation}
	\label{eq:bm_definition}
	b_m^{\eps}(x) \= \frac12\int_{-1}^{1} g_m^{\eps}(z)e^{i\pi x^2 z}\dz,
	\end{equation}
where the path of integration is chosen to lie in the upper
half-plane and orthogonal to the real line at the endpoints~$1$
and~$-1$. Since $g_m^{\eps}$ has exponential decay at $\pm 1$, the above integral
converges.
Note that $b_m^{\eps}$ is defined for $m\ge 0$ if $\eps=+1$ and for $m\ge 1$ if $\eps=-1$;
for convenience let us also define $b_{0}^{-}(x) = 0$.

Recall that Schwartz functions are $C^{\infty}$-smooth functions that,
together with all of their derivatives, decay faster than any inverse
power of $x$.

\begin{proposition} \label{prop:schwartz_eigenfunction}
The function $b_m^{\eps}\colon\RR\to\RR$ is an even Schwartz function that satisfies
	\[\ftt{b_m^{\eps}}(x) \= \eps b_m^{\eps}(x)\]
and
	\[b_m^{\eps}(\sqrt{n}) \= \delta_{n,m},\quad n\ge 1,\,m\ge0,\]
where $\delta_{n,m}$ is the Kronecker delta. In addition, we have $b_0^{+}(0) = 1$.
\end{proposition}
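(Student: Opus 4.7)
My plan is to verify the four claims in turn. Evenness is immediate, since the integrand depends on $x$ only through $x^2$. For the interpolation values $b_m^{\eps}(\sqrt{n})=\delta_{n,m}$, I exploit the $2$-periodicity of $g_m^{\eps}$: because $g_m^{\eps}$ is holomorphic on $\HH$ with super-exponential decay at the cusps $\pm 1$, Cauchy's theorem allows the original contour to be replaced by the three-piece path $-1\to -1+iC\to 1+iC\to 1$ for any $C>0$. For $x=\sqrt{n}$ with $n\in\ZZ_{\ge 0}$, the factor $e^{i\pi x^2 z}$ takes the same value at $z=\pm 1$, so $2$-periodicity of $g_m^{\eps}$ forces the two vertical pieces to cancel, leaving
\[
b_m^{\eps}(\sqrt{n}) \= \tfrac12\int_{-1+iC}^{1+iC} g_m^{\eps}(z)\,e^{i\pi n z}\,\dz.
\]
Substituting the Fourier expansion $g_m^{\eps}(z)=\sum_{k\ge -m} c_k\,e^{i\pi k z}$ and integrating term by term, orthogonality of $\{e^{i\pi(k+n)t}\}$ on $[-1,1]$ picks out only $k=-n$, giving $b_m^{\eps}(\sqrt{n})=c_{-n}$. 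The asymptotic conditions in \eqref{cond:asympt} pin down $c_{-n}=\delta_{n,m}$ for $n\ge 1$ and both $\eps=\pm$, and $b_0^{+}(0)=1$ follows from the constant term of $g_0^{+}=\theta^3=1+O(q^{1/2})$.

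For the Fourier eigenvalue property, I would apply the Fourier transform under the integral sign using the Gaussian identity $\ftt{e_z}(\xi)=(-iz)^{-1/2}e_{-1/z}(\xi)$ and then substitute $w=-1/z$. This sends $\gamma$ to a contour $\gamma'$ in $\HH$ running from $1$ to $-1$, and a short branch-of-square-root calculation gives $(-iz)^{-1/2}\,\dz = -(-iw)^{-3/2}\,dw$. The $S$-modularity $g_m^{\eps}(-1/w)=\eps\,(-iw)^{3/2}g_m^{\eps}(w)$ from \eqref{cond:trans} then cancels the $(-iw)^{-3/2}$ and leaves a factor of $-\eps$, while the orientation reversal $\gamma\leftrightarrow\gamma'$ supplies an additional minus sign, yielding $\ftt{b_m^{\eps}}(\xi)=\eps\,b_m^{\eps}(\xi)$.

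The Schwartz property is the main technical step. I would again deform the contour to the three-piece path and estimate each piece. The horizontal piece at height $C$ contributes at most $O(e^{-\pi x^2 C})$ because $|e^{i\pi x^2 z}|=e^{-\pi x^2 C}$ there. On the vertical pieces one needs a quantitative version of the cusp vanishing in \eqref{cond:asympt}: using the ansatz \eqref{mod:ansatz} together with \eqref{eq:jthetacusp} (which shows $1/\jth(1+is)=O(e^{-2\pi/s})$) and a parallel analysis of $\theta$ and $1-2\lambda$ at the cusp $1$, one obtains an estimate of the form $|g_m^{\eps}(1+is)|\le C_m e^{-c_m/s}$ for small $s>0$. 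Combined with the factor $e^{-\pi x^2 s}$ from the Gaussian and the elementary inequality $-c/s-\pi x^2 s\le -2|x|\sqrt{\pi c}$, this yields exponential decay in $|x|$ of the vertical contributions. Differentiating under the integral introduces only polynomial factors in $x$ (and bounded factors in $z$), so every derivative of $b_m^{\eps}$ decays rapidly as well, proving $b_m^{\eps}\in\mathcal{S}(\RR)$. The main obstacle is obtaining the clean cusp estimate $|g_m^{\eps}(1+is)|\le C_m e^{-c_m/s}$; everything else is routine contour manipulation.
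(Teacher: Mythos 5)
Your proof is correct and follows essentially the same lines as the paper's: evenness is immediate, the Fourier-eigenfunction property follows from the substitution $z\mapsto-1/z$ together with the $S$-transformation of $g_m^{\eps}$ (your branch bookkeeping $(-iz)^{-1/2}\,\dz = -(-iw)^{-3/2}\,dw$ is right), and the interpolation values are read off as $q$-expansion coefficients of $g_m^{\eps}$. For the Schwartz estimate the paper stays on the semicircular contour, parametrizes by $z=e^{i\pi t}$, and splits the $t$-integral at $\delta\sim 1/x$, whereas you deform to a rectangular path and balance the cusp decay $|g_m^{\eps}(1+is)|\ll e^{-c_m/s}$ against $e^{-\pi x^2 s}$ via the AM--GM inequality; these are two implementations of the same underlying mechanism, so I would count the arguments as essentially identical.
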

\begin{proof}
Clearly, $b_m^{\eps}$ is an even function, since $e_z(x) = e^{i\pi x^2 z}$ is even.
That it indeed takes real values for $x\in\RR$ can be seen by taking the integral over the semicircle $z=e^{it}$, $t\in(0,\pi)$, making a change of variables $z\mapsto -\overline{z}$, and noting that $\overline{g_m^{\eps}(z)} = g_m^{\eps}(-\overline{z})$.
Let us prove that $b_m^{\eps}$ belongs to the Schwartz class.
We will only consider the case ``$\eps=+$'', but the same argument will work
also in the case ``$\eps=-$''. Since $g_n^{+}(z) = \theta^3(z)P^+_n(\jth^{-1}(z))$,
it is enough to prove that for each $n\in \NN$ the integral
	\[\beta_n(x) \= \frac12\int_{-1}^{1} \theta^3(z)\jth^{-n}(z)e^{i\pi x^2 z}\dz\]
is a Schwartz function. On the circle arc from $-1$ to $1$ the function $1/\jth(z)$ takes real
values between $0$ and $64$, and moreover
	\[\jth^{-1}(\pm 1+i/t) \,\le\, C\exp(-2\pi t),\; t\to \infty,\; \Re(t) > 0.\]
By taking the $k$-th derivative of $\beta_n(x)$ with respect to $x$ under the
integral we obtain
	\[\beta_n^{(k)}(x) \= \frac12\int_{-1}^{1} \theta^3(z) \jth^{-n}(z)Q_{k}(x,z)e^{i\pi x^2 z}\dz,\]
where $Q_{k}(x,z)$ are polynomials defined by
    \begin{equation}\label{eq:polyqk}
    \frac{\partial^k}{\partial x^k}e^{i\pi x^2z} = Q_{k}(x,z)e^{i\pi x^2z}.
    \end{equation}
%Note that satisfy $Q_{0} = 1$ and
%	\[Q_{k+1}(x,z) \= \frac{\dl}{\dl x}Q_{k}(x,z)
%+ (2\pi i z x)Q_{k}(x,z).\]
Clearly, there exists a constant $C_k$ such that
	\[|Q_{k}(x,z)| \leqs C_k(1+|x|^2)^{k}(1+|z|^2)^{k},\]
thus we get
	\[|\beta_n^{(k)}(x)| \leqs \pi 2^{k+3}C_k(1+|x|^2)^{k}\int_{0}^{1/2}
	\jth^{-n}(e^{i\pi t})e^{-\pi x^2 \sin(\pi t)}\dt.\]
Here we used a rather crude estimate $|\theta(e^{i\pi t})| < 2$ for $t\in(0,1/2)$. When $|x|$ is
small, we estimate the above integral by $64^n$, for all other values of $x$ we estimate the integral
by splitting it into two parts (where we take $\delta = (\sqrt{\pi}x)^{-1}$):
\begin{align*}
\int_{0}^{1/2} \jth^{-n}(e^{i\pi t})e^{-\pi x^2 \sin(\pi t)}\dt &\=
\int_{0}^{\delta} \jth^{-n}(e^{i\pi t})e^{-\pi x^2 \sin(\pi t)}\dt +
\int_{\delta}^{1/2} \jth^{-n}(e^{i\pi t})e^{-\pi x^2 \sin(\pi t)}\dt \\
	&\leqs C\delta e^{-2/\delta} + 64^n e^{-2\pi\delta x^2}
	\= e^{-2\sqrt{\pi}x}(64^n+C/(x\sqrt{\pi})),
\end{align*}
from which it follows that $\beta_n$ is a Schwartz function.

To check that $\ftt{b_m^{\eps}} = \eps b_m^{\eps}$ we will use the fact
that $\ftt{e_z} = (-iz)^{-1/2}\,e_{-1/z}$ and the transformation
property~\eqref{cond:trans}:
	\begin{align*}
	\widehat{b_m^{\eps}}(x) &\= \frac12\int_{-1}^{1} g_m^{\eps}(z)(-iz)^{-1/2}e^{i\pi x^2(-1/z)}\dz \\
	&\= \frac12\int_{-1}^{1} -g_m^{\eps}(z)(-iz)^{3/2}e^{i\pi x^2(-1/z)}d(-1/z)\\
	&\= \frac12\int_{1}^{-1} \eps g_m^{\eps}(-1/z)e^{i\pi x^2(-1/z)}d(-1/z)
	\= \eps b_m^{\eps}(x).
	\end{align*}
	In the above computations we always choose the branch of $(-iz)^{k/2}$ that takes positive values for $z$ on the imaginary semiaxis. Finally, note that
		\[b_m^{\eps}(\sqrt{n}) \= \frac12\int_{-1}^{1} g_m^{\eps}(z)e^{i\pi n z}\dz\]
	is simply the coefficient of $q^{-n/2}$ in the $q$-expansion of $g_m^{\eps}$, so that~\eqref{cond:asympt} immediately implies $b_m^{\eps}(\sqrt{n}) = \delta_{n,m}$
	and $b_0^{+}(0) = 1$.
\end{proof}
\noindent\textbf{Remark.} Note that~\eqref{cond:asympt} also implies that
$b_m^+(0) = \delta_{m,0}$, and using the explicit
formula~\eqref{eq:kernels} for the kernel $K_{-}$, we also get
	\begin{equation*}
	b_m^-(0) \= \begin{cases}
	-2,\quad m\ge1\mbox{ is a square},\\
	0,\quad\quad \mbox{ otherwise}.	
	\end{cases}
	\end{equation*}
Alternatively, this last equation follows from the Poisson summation formula
	\[\sum_{n\in\ZZ} b_{m}^{-}(n) \= \sum_{n\in\ZZ} \ftt{b_{m}^{-}}(n)
	 \= -\sum_{n\in\ZZ} b_{m}^{-}(n).\]

To establish other properties of the sequences $\{b_m^{\eps}(x)\}_m$
we will need to work with generating functions. Let $\mathcal{D}$ be the
standard fundamental domain for the group $\Gamma_{\theta}$ (as defined
in~\eqref{eq:funddom}). For a fixed $x$ define a function
$F_{\eps}(\tau, x)$ on the set
	\[\{\tau\in\HH\: \forall k\in\ZZ,\ |\tau-2k| > 1 \} \es{\supset}\mathcal{D}+2\ZZ\]
by
	\begin{equation} \label{eq:genfun}
	F_{\eps}(\tau, x) \= \frac12\int_{-1}^{1}K_{\eps}(\tau,z)e^{i\pi x^2 z}\dz,
	\end{equation}
where the contour is the semicircle in the upper half-plane that passes
through~$-1$ and~$1$. Note that for $\Im(\tau) > 1$ we have
	\begin{equation} \label{eq:genfunseries}
	F_{\eps}(\tau, x) \= \sum_{n=0}^{\infty} b_n^{\eps}(x)e^{i\pi n \tau},
	\end{equation}
and the series converges absolutely. Our next task is to show that
$F_{\eps}$ can be analytically continued to $\HH$
(and hence~\eqref{eq:genfunseries} also holds for all $\tau\in\HH$).
\begin{proposition} \label{prop:genfunceq}
	For any $\eps\in\{+,-\}$ and $x\in\RR$ the function $F_{\eps}(\tau,x)$
	admits an analytic continuation to $\HH$. Moreover, the analytic continuation
	satisfies the functional equations
	\begin{equation}\label{eq:genfun_main}
		\begin{split}
			F_{\eps}(\tau,x) - F_{\eps}(\tau+2,x) &\= 0,\\
			F_{\eps}(\tau,x) + \eps(-i\tau)^{-1/2}F_{\eps}\Big(-\frac{1}{\tau},x\Big) &\=
			e^{i\pi \tau x^2} + \eps(-i\tau)^{-1/2}e^{i\pi (-1/\tau) x^2}.
		\end{split}
	\end{equation}
\end{proposition}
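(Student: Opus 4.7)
My plan is to handle the $T^2$-periodicity as a quick consequence of kernel invariance, and then establish the analytic continuation and the $S$-transformation equation together via contour deformation and residue calculus.

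The $T^2$-periodicity $F_\eps(\tau+2, x) = F_\eps(\tau, x)$ on the initial domain is immediate from the invariance of $K_\eps(\tau, z)$ under $\tau \mapsto \tau+2$ (each of $\theta^3$, $\lambda$, and $\jth$ is $T^2$-invariant), and it extends to $\HH$ once the analytic continuation is in place. For the analytic continuation, I plan to deform $C_0$ continuously as $\tau$ moves into the half-disk $D := \{z \in \HH : |z| < 1\}$, keeping the contour away from the two poles of $K_\eps(\tau, \cdot)$ in $\HH$ at $z = \tau$ and $z = -1/\tau$. Starting with $\tau$ in the exterior region (e.g.\ $\tau = 2i$, where $\tau \notin D$ and $-1/\tau = i/2 \in D$) and moving $\tau$ into $D$, both poles cross $C_0$---simultaneously through $z = i$ at $\tau = i$, a degeneracy I would resolve by a generic perturbation of the path of $\tau$. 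By homotopy invariance of the deformation in $\HH \setminus \{\tau, -1/\tau\}$, the closed loop $\widetilde C_\tau - C_0$ winds $+1$ around $z = \tau$ and $-1$ around $z = -1/\tau$. Combining $\mathrm{Res}_{z=\tau} K_\eps(\tau, z) = \tfrac{1}{i\pi}$ from the remark with $\mathrm{Res}_{z=-1/\tau} K_\eps(\tau, z) = -\tfrac{\eps(-i\tau)^{-1/2}}{i\pi}$, which I would compute via $K_\eps(\tau, -1/w) = \eps(-iw)^{3/2} K_\eps(\tau, w)$, the residue theorem then yields, for $\tau \in D$,
\[
F_\eps(\tau, x) \= \tfrac{1}{2}\int_{C_0} K_\eps(\tau, z) e^{i\pi x^2 z}\dz \es{+} e^{i\pi \tau x^2} \es{+} \eps(-i\tau)^{-1/2} e^{-i\pi x^2/\tau}.
\]
The right-hand side is analytic on $D$, and $T^2$-translation extends $F_\eps$ analytically to all of $\HH$.

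For the $S$-transformation equation, I would take $\tau$ with $\Im(\tau) > 1$ (so $\tau$ is in the initial domain and $-1/\tau \in D$) and apply the $D$-formula to $F_\eps(-1/\tau, x)$. The identity $K_\eps(-1/\tau, z) = -\eps(-i\tau)^{1/2} K_\eps(\tau, z)$ turns the integral term into $-\eps(-i\tau)^{1/2} F_\eps(\tau, x)$, while the two correction terms become $e^{-i\pi x^2/\tau}$ and $\eps(-i\tau)^{1/2} e^{i\pi \tau x^2}$ (using $(-i(-1/\tau))^{-1/2} = (-i\tau)^{1/2}$). Rearranging and multiplying by $\eps(-i\tau)^{-1/2}$ yields exactly the stated identity, and it extends to $\HH$ by analytic continuation. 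The main obstacle is the winding-number claim---justifying that $\widetilde C_\tau - C_0$ winds precisely $+1$ around $\tau$ and $-1$ around $-1/\tau$---which requires a careful homotopy-class analysis through the simultaneous pole crossing at $z = i$.
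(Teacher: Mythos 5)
Your approach is correct and genuinely different from the paper's. The paper first ``folds'' the semicircle onto the arc from $-1$ to $i$ by symmetrizing the integrand with the factor $e^{i\pi x^2 z}+\eps(-iz)^{-1/2}e^{i\pi x^2(-1/z)}$; this $S$-symmetric integrand lets them deform only the left half of the contour into $S\mathcal{D}$, so that they pick up a \emph{single} residue (at $z=\tau$) whose value already equals the full right-hand side of~\eqref{eq:genfun_main}. You instead keep the full semicircle $C_0$, track the simultaneous crossings of the two poles $z=\tau$ and $z=-1/\tau$, and combine two residue contributions with opposite winding numbers. I checked your winding numbers ($+1$ around $\tau$, $-1$ around $-1/\tau$ for $\tilde C_\tau - C_0$) and your residue $\mathrm{Res}_{z=-1/\tau} K_\eps(\tau,z) = -\eps(-i\tau)^{-1/2}/(i\pi)$, and they are right; the resulting closed-form continuation $F_\eps = \tfrac12\int_{C_0} + e^{i\pi\tau x^2}+\eps(-i\tau)^{-1/2}e^{-i\pi x^2/\tau}$ for $\tau\in D$ and the derivation of the $S$-equation by substituting $-1/\tau$ and using $K_\eps(-1/\tau,z)=-\eps(-i\tau)^{1/2}K_\eps(\tau,z)$ are both correct. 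What each approach buys: the paper's folding reduces the residue bookkeeping to a single pole and makes the degenerate point $\tau=i$ unproblematic (they handle it separately by a short argument at the end), while your approach yields a cleaner explicit formula for the continuation on $D$ at the cost of the two-pole homotopy analysis you flag.

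Two details you should tighten. First, $K_\eps(\tau,\cdot)$ has poles at \emph{all} $\Gamma_\theta$-translates of $\tau$, not just at $\tau$ and $-1/\tau$; you need to note that for $\tau$ near $C_0$ only these two lie in $\ol{\mathcal{D}}\cup\ol{S\mathcal{D}}$, so no other poles threaten the deformation. Second, the ``generic perturbation'' step deserves a sentence more: for $|\tau|=1$ the two crossings always happen simultaneously (since $|\tau|=1 \Leftrightarrow |-1/\tau|=1$), but for $\tau\ne i$ they occur at \emph{distinct} points of $C_0$, namely $\tau$ and $-1/\tau=-\ol\tau$, so the two local bulges of the contour are disjoint and the winding number at each is determined by which side the pole approached from; the degenerate case is only $\tau=i$ itself, which one avoids by moving $\tau$ off the imaginary axis. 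With those two points filled in, your proof is complete.
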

\begin{proof}
	To prove the theorem, it is enough to show that there exists an
    analytic continuation to some open set $\Omega$ containing the
    relative closure of $\mathcal{D}$, on which~\eqref{eq:genfun_main}
    holds. Indeed, we can then choose a smaller $\Omega$ in such a
    way that $\Omega \cap \gamma^{-1}(\Omega)\ne\varnothing$ if and
    only if $\gamma\in\{T^2,T^{-2},S,I\}$.
    Since $\cup_{g\in\Gamma_{\theta}}g\Omega = \HH$, we can
    construct a continuation inductively by repeatedly
    using~\eqref{eq:genfun_main} to pass to the
    neighboring sets $g\Omega$ that have not been covered yet.
    Since $\Gamma_{\theta}$ is generated by $S$ and $T^2$, and
    the only relation is $S^2=1$, this
    process indeed gives a single-valued analytic continuation
    (the main reason is that there are no cycles of neighboring
    domains; this is also clear from
    Figure~\ref{figure:fundamental_domain}).

	The first functional equation in~\eqref{eq:genfun_main} is clearly satisfied, since the
	integral that defines $F_{\eps}$ automatically defines a $2$-periodic function on
	the open set $\{\tau\in\HH \: \forall k\in\ZZ,\ |\tau-2k| > 1 \}$ that
	contains the vertical lines $\Im(\tau)=\pm1$.
	
	Hence, we only need to deal with the second functional equation.
	We can get an analytic continuation of $F_{\eps}$ to some neighborhood of
	$\{z\in\HH\:|z|=1, z\ne i\}$ by changing the contour of integration
	in~\eqref{eq:genfun}. First, we rewrite the integral as
	\begin{equation} \label{eq:genfun2}
		\begin{split}
			2F_{\eps}(\tau, x) &\= \int_{-1}^{i}K_{\eps}(\tau,z)e^{i\pi x^2 z}\dz
			+ \int_{i}^{1}K_{\eps}(\tau,z)e^{i\pi x^2 z}\dz \\
			&\= \int_{-1}^{i}K_{\eps}(\tau,z)e^{i\pi x^2 z}\dz
			- \int_{-1}^{i}K_{\eps}(\tau,-1/z)e^{i\pi x^2 (-1/z)}z^{-2}\dz \\
			&\= \int_{-1}^{i}K_{\eps}(\tau,z)
			(e^{i\pi x^2 z}+\eps (-iz)^{-1/2}\,e^{i\pi x^2 (-1/z)})\dz,
		\end{split}
	\end{equation}
	where we have used the transformation property~\eqref{eq:kerneltrans}.
	Note, that if $\tau$ belongs to $\ol{\mathcal{D}}\cup \ol{S\mathcal{D}}$,
	then the only poles of $K_{\eps}(\tau,z)$ (as a function of $z$)
	inside $\ol{\mathcal{D}}\cup \ol{S\mathcal{D}}$ are at $z=\tau$ and $z=-1/\tau$.
	Let $\gamma_1$ be the circle arc from $-1$ to $i$, and let $\gamma_2$ be a simple
	smooth path from $-1$ to $i$ that lies
	inside $\ol{S\mathcal{D}}$ and strictly below $\gamma_1$. Denote by
	$\mathcal{F}$ the region enclosed between $\gamma_1$ and $\gamma_2$. We will now
	build a continuation of $F_{\eps}$ to $\mathcal{F}$ and show that it satisfies
	the functional equation. We define a continuation by the contour integral
	\[\tilde F_{\eps}(\tau,x) \= \frac12\int_{\gamma_2}K_{\eps}(\tau,z)
	(e^{i\pi x^2 z}+\eps (-iz)^{-1/2}e^{i\pi x^2 (-1/z)})\,\dz.\]
	\begin{figure}
		\centering
		\begin{tikzpicture}
		\definecolor{cv0}{rgb}{0.9,0.9,0.9}
		\draw (-3,0)node[below] {$-1$};
		\draw (3,0) node[below] {$1$};
		\draw (0,3) node[above,xshift=5] {$i$};
		%\draw[dash pattern=on 1pt off 1pt, color=lightgray] (0,3) -- (0,6);
		\draw[lightgray] (3,0) arc (180:120:3);
		\draw[lightgray] (-3,0) arc (0:60:3);
		\draw[lightgray] (3,0) -- (3,6);
		\draw[lightgray] (-3,0) -- (-3,6);
		\draw[lightgray] (-4.5,0) -- (4.5,0);
		\draw[lightgray] (3,0) arc (0:180:3);
		\filldraw[dashed, color=cv0, line width=0.35mm, draw=black] (-3,0) arc (180:90:3) to[out=200,in=70] node[above, midway, xshift=-5,yshift=20, black]{$\gamma_1$} node[below, midway,yshift=-5,black]{$\gamma_2$} (-3,0);
		\draw (-2.05,1.6) node[above]{$\mathcal{F}$};
		\draw (0,5) node[above]{$\mathcal{D}$};
		\fill (-1.4,2.55) circle (0.04) node[above] {$\tau$};
		\fill (1.489,2.712) circle (0.04) node[above] {$S\tau$};
		%\draw (1.5,2.598076) -- (1.5,6);
		%\draw (-1.5,2.598076) -- (-1.5,6);
		\end{tikzpicture}
		\caption{Fundamental domain for $\Gamma_{\theta}$
			and the contour of integration.}
		\label{figure:domains}
	\end{figure}
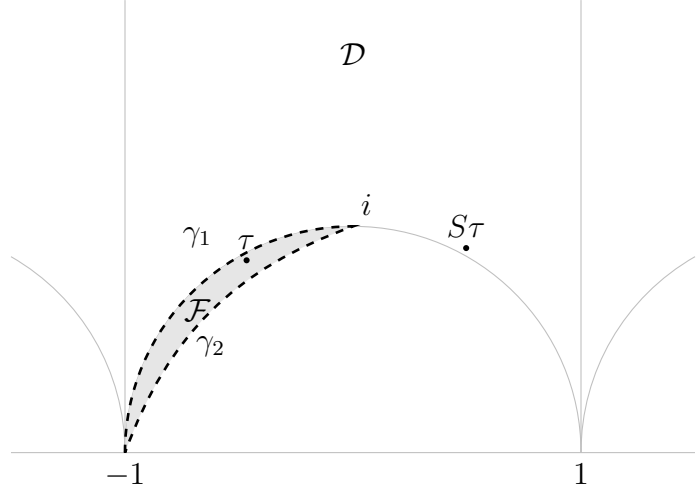

	Clearly, $F_{\eps} = \tilde F_{\eps}$ for $\tau$ with big enough imaginary part, so
	$\tilde F$ indeed defines an analytic continuation to $\mathcal{F}$.
	For $\tau \in \mathcal{F}$ we compute
	\begin{align*}
		\tilde F_{\eps}(\tau,x) + \frac{\eps}{\sqrt{-i\tau}}F_{\eps}\Big(-\frac{1}{\tau},x\Big) &=
		\tilde F_{\eps}(\tau,x) - \frac12\int_{\gamma_1}\frac{\eps K_{\eps}(-1/\tau,z)}{-\sqrt{-i\tau}}
		(e^{i\pi x^2 z}+\frac{\eps e^{i\pi x^2 (-1/z)}}{\sqrt{-iz}})\dz \\
		&=
		\tilde F_{\eps}(\tau,x) - \frac12\int_{\gamma_1}K_{\eps}(\tau,z)
		(e^{i\pi x^2 z}+\eps (-iz)^{-1/2}e^{i\pi x^2 (-1/z)})\dz\\
		&=
		\frac12\int_{\partial\mathcal{F}}K_{\eps}(\tau,z)
		(e^{i\pi x^2 z}+\eps (-iz)^{-1/2}e^{i\pi x^2 (-1/z)})\dz \\
		&= i\pi \sum_{w\in\mathcal{F}} {\rm Res}_{z=w}\big(K_{\eps}(\tau,z)
		(e^{i\pi x^2 z}+\eps (-iz)^{-1/2}e^{i\pi x^2 (-1/z)})\big) \\
		&= e^{i\pi x^2 \tau}+\eps (-i\tau)^{-1/2}e^{i\pi x^2 (-1/\tau)},
	\end{align*}
	which is precisely the functional equation that we needed. Similar
    computation works for the arc from $i$ to $1$. The only thing that
    is left is to check that $F_{\eps}$ has no singularity
    at~$\tau = i$. For~$\eps = 1$ this follows from the second
    functional equation, while for $\eps = -1$ both $2\lambda(z)-1$
	and $e^{i\pi z r^2} +\eps (-iz)^{-1/2}e^{i\pi (-1/z) r^2}$ vanish
    at $z=i$, so that they cancel the double pole at $i$ coming
    from $\jth(i)-\jth(z)$, and hence the integral~\eqref{eq:genfun2}
    converges at~$\tau=i$.
\end{proof}
As an immediate corollary, we obtain that
formula~\eqref{eq:genfunseries} is valid
for all $\tau\in\HH$. This already implies that for all $\delta>0$
we have $b_n^{\eps}(x) = O((1+\delta)^n)$. In the next section we prove
a much stronger estimate.

Note that the only properties of $K_{\eps}$ that were used in the proof
are the modularity in~$\tau$ and in~$z$, as well as the fact that the only poles are at $z \in \Gamma_{\theta}\tau$, and that the residue
at~$z=\tau$ is equal to~$1/(i\pi)$.

\section{Growth estimate} \label{sec:mainest}
The main result of this section is the following.
\begin{theorem} \label{thm:mainest}
	For any $\eps\in\{+,-\}$ the numbers $b_n^{\eps}(x)$ satisfy
	\[|b_n^{\eps}(x)| \= O(n^{2})\]
	uniformly in $x$.
\end{theorem}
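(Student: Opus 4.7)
The plan is to realize $b_n^{\eps}(x)$ as the $n$-th Fourier coefficient of the $2$-periodic holomorphic function $\tau\mapsto F_{\eps}(\tau,x)$ from Proposition~\ref{prop:genfunceq}. Since $F_{\eps}(\tau,x)=\sum_{n\geqs 0}b_n^{\eps}(x)e^{i\pi n\tau}$ on all of~$\HH$, Cauchy's theorem gives, for any $h>0$,
\[b_n^{\eps}(x) \= \frac12\int_{-1+ih}^{1+ih}F_{\eps}(\tau,x)\,e^{-i\pi n\tau}\,d\tau,\]
so that
\[|b_n^{\eps}(x)| \leqs e^{\pi n h}\,\sup_{\sigma\in[-1,1]}\big|F_{\eps}(\sigma+ih,x)\big|.\]
It therefore suffices to establish a bound of the form $|F_{\eps}(\sigma+ih,x)|\ll h^{-2}$ as $h\to 0^{+}$, uniformly in $x\in\RR$ and in $\sigma\in[-1,1]$; the choice $h=2/(\pi n)$ will then yield $|b_n^{\eps}(x)|\ll n^{2}$.

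To prove such a bound on $F_{\eps}$, I would divide the strip $\{|\Re\tau|\leqs 1,\,0<\Im\tau\leqs 1\}$ into a ``bulk'' region (bounded away from the cusps $\pm 1$) and cuspidal neighborhoods of $\pm 1$. In the bulk, the semicircular integral representation~\eqref{eq:genfun} combined with the trivial estimate $|e^{i\pi x^{2}z}|=e^{-\pi x^{2}\Im z}\leqs 1$ on the upper semicircle gives
\[|F_{\eps}(\tau,x)|\leqs \frac12\int_{0}^{\pi}|K_{\eps}(\tau,e^{i\theta})|\,d\theta,\]
a quantity depending on $\tau$ alone; this can be bounded by standard estimates on $\theta$, $\lambda$, and $\jth$ coming from their $q$-expansions. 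Near the cusp~$1$ (and similarly $-1$), I would instead reduce $\tau$ into $\overline{\mathcal{D}}$ by applying a sequence of the generators $S$ and $T^{\pm 2}$ of~$\Gamma_{\theta}$ and using the functional equations~\eqref{eq:genfun_main} at each step: each application of $S$ relates $F_{\eps}(\tau',x)$ and $F_{\eps}(-1/\tau',x)$ by a weight-$1/2$ modular factor and produces the inhomogeneous Gaussian $e^{i\pi\tau'x^{2}}+\eps(-i\tau')^{-1/2}e^{i\pi(-1/\tau')x^{2}}$. The key observation that keeps the final estimate uniform in $x$ is that $|e^{i\pi\tau'x^{2}}|\leqs 1$ on $\HH$, so every such inhomogeneous term is bounded in absolute value by $1+|\tau'|^{-1/2}$ independently of~$x$.

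The main obstacle is the bookkeeping: a point $\sigma+ih$ with $\sigma$ close to $\pm 1$ and $h$ small needs on the order of $\log(1/h)$ applications of the generators to be brought into $\overline{\mathcal{D}}$, and both the accumulated product of modular factors $|c\tau+d|^{1/2}$ and the telescoping sum of cocycle Gaussians have to be tracked carefully to yield the polynomial bound $h^{-2}$. I would carry out this accounting by parametrizing points near the cusp $1$ via an explicit conjugation sending~$1$ to~$i\infty$, and by using the $q$-expansion of $\jth^{-1}$ at~$1$ recorded in~\eqref{eq:jthetacusp} to locate the dominant contributions. This is the technical core of the section; once the $h^{-2}$ estimate on $F_{\eps}$ is in place, Theorem~\ref{thm:mainest} follows immediately from the Cauchy contour shift above.
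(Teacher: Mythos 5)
Your overall strategy matches the paper's: estimate $F_\eps(\sigma+ih;x)$ by a power of $1/h$ uniformly in $x$ and then extract a polynomial bound on the Fourier coefficients (this is exactly Lemma~\ref{lem:heckebound}), using the functional equations~\eqref{eq:genfun_main} to reduce to the fundamental domain and the observation that $|e^{i\pi \tau' x^2}| \le 1$ to make the inhomogeneous cocycle terms uniform in $x$. However, the outline leaves the two essential lemmas unproved and contains a quantitative error that matters. The claim that a point $\sigma + ih$ with $\sigma$ near $\pm 1$ needs on the order of $\log(1/h)$ generator applications to reach $\overline{\mathcal{D}}$ is false: the paper's Lemma~\ref{lem:contfrac} gives only $\Im(\tau_j) \le 1/(2j-1)$, so the reduction depth can be as large as $\Theta(1/h)$, and this worst case occurs precisely in approach to the cusp $1$ (the associated continued fraction has all partial quotients $\pm 1$). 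The resulting $O(1/h)$ terms in the cocycle sum are what contribute the extra power of $\Im(\tau)^{-1}$ in Lemma~\ref{lem:cocyclelemma} and hence the exponent $2$ in the theorem; a $\log$-depth accounting would not produce a correct estimate.

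Separately, your ``bulk'' estimate, which keeps the semicircular contour and simply replaces $|e^{i\pi x^2 z}|$ by $1$, does not actually control $F_\eps(\tau;x)$ for $\tau\in\mathcal{D}$ near the cusps $\pm 1$. There both $\jth(\tau)$ and $\jth(z)$ (with $z$ on the arc near $\pm 1$) tend to $\infty$ along the same real ray $[1/64,\infty)$, so the denominator $\jth(z)-\jth(\tau)$ of the kernel can be small relative to the numerator, and the naive bound is useless in exactly the regime you need. The paper's Lemma~\ref{lem:kernel_estimate} resolves this by deforming the contour to $\gamma=S\ell\cup\ell$, chosen so that $\jth(z)$ runs along the vertical line $\Re = 1/64$ while $\jth(\tau)$ (for $\tau\in\mathcal{D}_1$) lies in the lower half-plane, giving the crucial separation $|\jth(\tau)-\jth(z)| \gg \sqrt{|\jth(\tau)|^2+|\jth(z)|^2}$. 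Some deformation of this kind, or an equivalent device, is required before the analytic estimate near the cusp goes through, and without it the strategy stalls at precisely the point you describe as ``standard estimates''.
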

To prove this we will use the following general result that goes back to Hecke
(see, for example,~\cite{BeKn}*{Lemma~2.2, (ii)}).
\begin{lemma} \label{lem:heckebound}
	If a $2$-periodic analytic function $f\colon\HH\to\CC$ has a Fourier expansion $f(\tau) = \sum_{n\ge 0}a_n e^{i\pi n\tau}$ and for some $\alpha>0$ it satisfies
	\[ |f(\tau)| \leqs C\,\Im(\tau)^{-\alpha}\;\mbox{ for }\ \Im(\tau)<c,\]
	then for all sufficiently large $n$ we have
	\[|a_n| \leqs C\,\big(\frac{e\pi}{\alpha}\big)^{\alpha}\, n^\alpha.\]
\end{lemma}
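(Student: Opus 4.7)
The plan is to recover $a_n$ from $f$ by the standard Fourier extraction formula on a horizontal line and then optimize the height of that line. Since $f$ is $2$-periodic with expansion $f(\tau)=\sum_{n\ge 0}a_n e^{i\pi n\tau}$, for any $t\in(0,c)$ one has
\begin{equation*}
a_n \= \frac{1}{2}\int_{-1+it}^{1+it} f(\tau)\,e^{-i\pi n\tau}\,d\tau
     \= \frac{1}{2}\int_{-1}^{1} f(x+it)\,e^{-i\pi n(x+it)}\,dx.
\end{equation*}
The contour is fine because $f$ is holomorphic on $\HH$ and the exponentials on the two vertical sides cancel by $2$-periodicity.

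Next I would apply the hypothesis $|f(\tau)|\leqs C\,\Im(\tau)^{-\alpha}$, valid as long as $t<c$, to bound the integrand by $C\,t^{-\alpha}e^{\pi n t}$. The length of the interval is $2$, so the factor of $\frac12$ cancels and we obtain
\begin{equation*}
|a_n| \leqs C\,t^{-\alpha}\,e^{\pi n t},\qquad 0<t<c.
\end{equation*}
Now I would minimize the right-hand side over $t$. Differentiating $t^{-\alpha}e^{\pi n t}$ and setting the derivative to zero gives $t_\star=\alpha/(\pi n)$, and substituting back yields the value $(e\pi n/\alpha)^{\alpha}$. Thus
\begin{equation*}
|a_n| \leqs C\,\bigl(e\pi/\alpha\bigr)^{\alpha}\,n^{\alpha},
\end{equation*}
provided the optimal $t_\star$ lies in the allowed range $(0,c)$, which is exactly the condition $n>\alpha/(\pi c)$, i.e.\ ``$n$ sufficiently large.''

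There isn't really a hard step here: the only thing to watch is justifying the contour shift (which is immediate from holomorphy plus $2$-periodicity) and noting that the optimization in $t$ must respect the hypothesis's range $t<c$, which is what forces the restriction to large $n$. Everything else is a one-variable calculus exercise.
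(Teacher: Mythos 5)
Your proof is correct, and it is the standard Hecke argument (extract $a_n$ by integrating against $e^{-i\pi n\tau}$ along a horizontal line at height $t$, bound trivially using the hypothesis, then optimize $t=\alpha/(\pi n)$). The paper does not spell out a proof of this lemma but instead cites Berndt--Knopp, Lemma 2.2(ii); the argument given there is the same one you wrote, so there is nothing to reconcile.
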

To prove Theorem~\ref{thm:mainest} we will apply this lemma to the
generating function $F_{\eps}(\tau,x)$. To simplify notation, we will
write $F_{\eps}(\tau)$ instead of $F_{\eps}(\tau,x)$.
The estimate of $|F_{\eps}(\tau)|$ naturally splits into two parts:
combinatorial (estimating $F_{\eps}(\tau) - (F_{\eps}|A)(\tau)$
using functional equations) and  analytic (estimating
$F_{\eps}(\tau)$ using the defining contour integral).

To deal with the first part, we define functions $\phi_{A}(\tau)$
for $A\in\Gamma_{\theta}$:
    \begin{equation} \label{eq:funceq}
    \phi_{A}(\tau) \es{:=} F_{\eps}(\tau) - (F_{\eps}|_{1/2}^{-\eps}A)(\tau).
    \end{equation}
From the functional equations~\eqref{eq:genfun_main} for $F_{\eps}$
we have
	\begin{equation}
	\label{eq:cocycledef}
	\begin{split}
	\phi_{T^2}(\tau) &\= 0,\\
	\phi_{S}(\tau) &\=
	e^{i\pi x^2\tau} + \eps(-i\tau)^{-1/2}e^{i\pi x^2(-1/\tau)}\,.
	\end{split}
	\end{equation}
Moreover, the functions $\phi_A$ satisfy the cocycle relation
$\phi_{AB}=\phi_{B}+\phi_{A}|B$ (where we write~$|$
for~$|_{1/2}^{-\eps}$).
In other words, the collection $\{\phi_A\}_{A\in\Gamma_{\theta}}$
forms what is usually called a~$\Gamma_{\theta}$-cocycle
(see, for example,~\cite{Kn2}).

First, we need the following elementary lemma.
\begin{lemma} \label{lem:contfrac}
	For any $\tau\in\HH$ with $|\tau|\ge 1$ and any sequence of non-zero integers $\{n_j\}_{j\ge 1}$
	define a sequence of numbers $\tau_{j} \in \HH$ as follows:
	\begin{align*}
		\tau_0 &\= \tau, \\
		\tau_j &\= 2n_j - \frac{1}{\tau_{j-1}},\quad j\ge 1.
	\end{align*}
	Then the sequence $\{\Im(\tau_j)\}_{j\ge 0}$ is strictly decreasing and $\Im(\tau_j) \le \frac{1}{2j-1}$ for all $j\ge 1$.
\end{lemma}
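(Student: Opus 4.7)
The plan is to encode the iteration as a M\"obius action and reduce both assertions to number-theoretic estimates on integer convergents. Setting $M_k = \pmat{2n_k}{-1}{1}{0}$ and $\gamma_j = M_j M_{j-1} \cdots M_1 \in \SL_2(\ZZ)$, one has $\tau_j = \gamma_j \tau$, and a direct computation shows $\gamma_j = \pmat{p_j}{q_j}{p_{j-1}}{q_{j-1}}$, where the integer sequences $\{p_n\}$, $\{q_n\}$ satisfy the linear recurrences
\begin{align*}
p_j &\= 2n_j\, p_{j-1} - p_{j-2}, & p_{-1} &\= 0, & p_0 &\= 1,\\
q_j &\= 2n_j\, q_{j-1} - q_{j-2}, & q_{-1} &\= 1, & q_0 &\= 0.
\end{align*}
The standard formula for the imaginary part of a M\"obius transformation then gives $\Im(\tau_j) = \Im(\tau)/|p_{j-1}\tau + q_{j-1}|^2$, so both claims reduce to estimates on this denominator.

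For the strict-decrease claim I would show inductively that $|\tau_j| > 1$ for all $j \ge 1$: the bound $|\Re(1/\tau_{j-1})| \le 1/|\tau_{j-1}|$ gives $|\Re(\tau_j)| \ge 2|n_j| - 1/|\tau_{j-1}| \ge 1$, and since $\Im(\tau_j) > 0$, one obtains $|\tau_j|^2 \ge 1 + \Im(\tau_j)^2 > 1$. Combined with $\Im(\tau_j) = \Im(\tau_{j-1})/|\tau_{j-1}|^2$, this yields strict monotonicity for all $j \ge 2$; the first step $j=0\to 1$ is strict provided $|\tau_0| > 1$ (the boundary case $|\tau_0| = 1$ gives equality there, so the lemma is probably meant with this mild caveat).

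For the quantitative bound I would establish two integer-theoretic facts about $(p_j, q_j)$. First, $|p_j| \ge j+1$: induct to show $|p_j/p_{j-1}| \ge (j+1)/j$, starting from $|p_1/p_0| = |2n_1| \ge 2$ and using
\[\Big|\frac{p_j}{p_{j-1}}\Big| \= \Big|2n_j - \frac{p_{j-2}}{p_{j-1}}\Big| \geqs 2 - \frac{j-1}{j} \= \frac{j+1}{j};\]
the product telescopes to $|p_j| \ge j+1$. Second, $|q_j| \le |p_j| - 1$: the identity $\det \gamma_j = 1$ reads $p_j q_{j-1} - p_{j-1} q_j = 1$, which rearranges to the telescoping formula
\[\frac{q_j}{p_j} \= -\sum_{i=1}^{j} \frac{1}{p_i\, p_{i-1}}.\]
By the triangle inequality and the first bound, $|q_j/p_j| \le \sum_{i=1}^{j} 1/(i(i+1)) = 1 - 1/(j+1) < 1$, and integrality of $p_j, q_j$ upgrades this to $|q_j| \le |p_j| - 1$.

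Finally, a short extremal calculation shows that for any real $k$ with $|k| < 1$ and any $\tau \in \HH$ with $|\tau| \ge 1$ one has $|\tau + k|^2/\Im(\tau) \ge 1 - k^2$, with minimum attained on the unit circle at $\Re(\tau) = -2k/(1+k^2)$, $\Im(\tau) = (1-k^2)/(1+k^2)$. Applied with $k = q_{j-1}/p_{j-1}$ this gives $|p_{j-1}\tau + q_{j-1}|^2 \ge (p_{j-1}^2 - q_{j-1}^2)\,\Im(\tau)$, and the estimate $p_{j-1}^2 - q_{j-1}^2 \ge 2|p_{j-1}| - 1 \ge 2j-1$ closes the argument. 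I expect the main obstacle to be locating the telescoping identity for $q_j/p_j$: a naive induction based only on $|\tau_j|^2 \ge 1 + \Im(\tau_j)^2$ gives at best $\Im(\tau_j) \ll 1/\sqrt{j}$, and the improvement to $1/(2j-1)$ really seems to need the arithmetic observation that $|q_j| < |p_j|$.
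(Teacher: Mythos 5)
Your proof is correct, and it takes a genuinely different route from the paper's. The paper argues geometrically: it tracks $\tau_j$ inside half-disks $D_j$ on the real line, proves the clean inequality $\mathrm{diam}(SD) \le \mathrm{diam}(D)/(1+\mathrm{diam}(D))$ for any half-disk $D$ disjoint from $D(-1,1)$, starts from $\mathrm{diam}(D_1)=2$, and iterates to get $\mathrm{diam}(D_j)\le 2/(2j-1)$, hence $\Im(\tau_j)\le 1/(2j-1)$. Your argument is instead arithmetical: you pass to the convergent matrices $\gamma_j\in\SL_2(\ZZ)$, prove $|p_j|\ge j+1$ by a multiplicative telescoping, exploit $\det\gamma_j=1$ to telescope $q_j/p_j=-\sum 1/(p_i p_{i-1})$ and deduce $|q_j|\le|p_j|-1$, and close with the extremal estimate $|\tau+k|^2/\Im(\tau)\ge 1-k^2$ on $|\tau|\ge1$, $|k|<1$, giving $\Im(\tau_j)\le 1/(p_{j-1}^2-q_{j-1}^2)\le 1/(2j-1)$. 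Both proofs pivot on the same induction $|\tau_j|>1$, and both notice that the naive iteration of $y_{j+1}\le y_j/(1+y_j^2)$ only yields $y_j\ll j^{-1/2}$; the paper's extra leverage is the diameter-shrinking lemma, yours is the strict integer gap $|q_j|\le|p_j|-1$. The paper's version is shorter; yours exposes the continued-fraction structure and gives an explicit formula for $q_j/p_j$, which could be reused if finer information were needed. You are also right about the small caveat at $|\tau_0|=1$: then $\Im(\tau_1)=\Im(\tau_0)$, so the ``strictly decreasing'' claim fails at the first step; the paper's own proof only establishes strict decrease from $j\ge1$ onward, and in any case only the quantitative bound is used downstream.
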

\begin{proof}
	First, observe that $|\tau_j| > 1$ for all $j\ge 1$ (the proof is by
	induction). The inequality $\Im(\tau_j)\ge \Im(\tau_{j+1})$ the follows
	from $\Im(\tau_{j+1}) = \Im(\tau_j)/|\tau_j|^2 < \Im(\tau_j)$.
	
	For $a,b\in\RR$ denote by $D(a,b)$ the half-disk with center $(a+b)/2$ whose
	boundary semicircle passes through $a$ and $b$. Let $D$ be any such
	half-disk that does not intersect
	$D(-1,1)$ and set $D' = SD$. Then a simple calculation shows that
		\[{\rm diam}(D') \leqs \frac{{\rm diam}(D)}{1+{\rm diam}(D)}.\]
	Note that $\tau_1 \in \bigcup_{n \ne 0} D(2n-1,2n+1)$, so $\tau_1$ lies in
	some half-disk of diameter $2$. Denote this half-disk by $D_1$, and
	define $D_{j+1} = 2n_j+SD_j$.
	Then $\tau_j\in D_j$ and no $D_j$ intersects $D(-1,1)$.
	By repeatedly applying the above inequality we get that $D_j$
	has diameter at most $2/(2j-1)$, thus $\Im(\tau_j)\le 1/(2j-1)$.
\end{proof}

The following lemma allows us to estimate values of certain cocycles.
\begin{lemma} \label{lem:cocyclelemma}
	Let $\{\psi_{A}\}_{A\in\Gamma_{\theta}}$ be a cocycle (with respect to
	$|\;:=\;|_{k/2}^{-\eps}$) such that
		\begin{align*}
		\psi_{T^2} &\= 0,\\
		|\psi_{S}(\tau)| &\leqs |\tau|^{\alpha} + \Im(\tau)^{-\beta}
		\end{align*}
	for some $\alpha, \beta \ge 0$.
	Let $\tau'\in\mathcal{D}$, $A\in\Gamma_{\theta}$, and $\tau = A\tau'\in\HH$
	and suppose that $\Im(\tau) \le 1$. Then
	\[|\psi_{A}(\tau')| \leqs
	|\tau|^{\alpha}+\Im(\tau)^{-\alpha-1}+2\,\Im(\tau)^{-\beta-1}.\]
\end{lemma}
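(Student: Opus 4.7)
My plan is to decompose $A\in\Gamma_{\theta}$ as a word in the generators $S$ and $T^{\pm 2}$, guided by the continued-fraction reduction of Lemma~\ref{lem:contfrac}, then iterate the cocycle relation and apply the hypothesis term by term. The assumption should be read as $\psi_{T^{2}}=0$ (since $T\notin\Gamma_{\theta}$); by the cocycle relation this extends to $\psi_{T^{2n}}=0$ for all $n\in\ZZ$, so only the $S$-letters of a word for $A$ contribute to $\psi_{A}$.

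Write $A = (T^{2n_{k}}S)(T^{2n_{k-1}}S)\cdots(T^{2n_{1}}S)T^{2n_{0}}$ in canonical form, set $\tau_{0} = \tau' + 2n_{0}$ and $\tau_{j} = 2n_{j} - 1/\tau_{j-1}$, so that $\tau_{k} = A\tau' = \tau$. Since $\tau'\in\mathcal{D}$ ensures $|\tau_{0}|\geqs 1$, Lemma~\ref{lem:contfrac} gives $|\tau_{j}|\leqs\sqrt{2}$ and $\Im(\tau_{j})\leqs 1/(2j-1)$ for $j\geqs 1$, and in particular $k\leqs(1+1/\Im(\tau))/2$. Iterating $\psi_{BC} = \psi_{C} + \psi_{B}|C$ and discarding the vanishing $T^{2m}$-contributions yields
\[
\psi_{A}(\tau') \= \sum_{j=1}^{k} (\psi_{S}\,|\,C_{j})(\tau'),
\]
where $C_{j}$ is the suffix of the word sending $\tau'$ to $\tau_{j-1}$. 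Since $|j_{\theta}(\tau', C_{j})|^{k} = (\Im(\tau')/\Im(\tau_{j-1}))^{-k/4}\leqs 1$ by monotonicity of $\{\Im(\tau_{j})\}$, we obtain $|\psi_{A}(\tau')| \leqs \sum_{j=1}^{k}|\psi_{S}(\tau_{j-1})|$.

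For $j\geqs 2$ we have $|\tau_{j-1}|\leqs\sqrt{2}$ and $\Im(\tau_{j-1})\leqs 1/(2j-3)$, so $|\psi_{S}(\tau_{j-1})|\leqs 2^{\alpha/2} + (2j-3)^{\beta}$; summing over $j$ and using $k\leqs(1+1/\Im(\tau))/2$ produces a contribution $\ll\Im(\tau)^{-\beta-1}$. For the boundary term $j=1$, after absorbing any trailing $T^{2n_{0}}$ we may take $\tau_{0}=\tau'$; combining $\Im(\tau)\leqs\Im(\tau_{1})=\Im(\tau')/|\tau'|^{2}$ with $|\tau'|^{2}\geqs\Im(\tau')^{2}$ yields $\Im(\tau')\leqs 1/\Im(\tau)$ and $|\tau'|\leqs 1/\Im(\tau)$, whence $|\psi_{S}(\tau')|\leqs \Im(\tau)^{-\alpha} + (2/\sqrt{3})^{\beta}$, contributing $\ll \Im(\tau)^{-\alpha-1}$. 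The $|\tau|^{\alpha}$ summand in the statement simply absorbs configurations where $|\tau|$ is already large, and the coefficient $2$ on $\Im(\tau)^{-\beta-1}$ emerges from combining the boundary and tail contributions. The main subtlety I anticipate is pinning the implicit constants down to the clean coefficients $1$, $1$, and $2$ as stated, rather than hiding them inside a~$\ll$.
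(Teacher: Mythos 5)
Your overall strategy matches the paper's: write $A$ as a word in $S$ and $T^{\pm 2}$, iterate the cocycle relation, use the monotonicity of $\Im(\tau_j)$ to drop the automorphy factors, and control the number of terms via Lemma~\ref{lem:contfrac}. That part is correct and is the same argument. But two of the quantitative claims you make along the way are wrong, and they are not merely cosmetic.

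First, you assert that ``Lemma~\ref{lem:contfrac} gives $|\tau_j|\leqs\sqrt{2}$.'' It does not. Lemma~\ref{lem:contfrac} controls only $\Im(\tau_j)$ (through the shrinking half-disks $D_j$); the half-disk $D_{j}$ is centered near $2n_j$, so $|\tau_j|$ can be arbitrarily large when $|n_j|$ is large. With this bound unavailable, your estimate $|\psi_S(\tau_{j-1})|\leqs 2^{\alpha/2}+(2j-3)^{\beta}$ for $j\geqs2$ collapses. The correct replacement, and the one the paper uses, is $|\tau_j|\leqs\Im(\tau)^{-1}$ for the intermediate indices: since $\Im(\tau)\leqs\Im(\tau_{j+1})=\Im(\tau_j)/|\tau_j|^2\leqs |\tau_j|^{-1}$. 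Summing $|\tau_j|^\alpha\leqs\Im(\tau)^{-\alpha}$ over the at most $\sim\frac12\Im(\tau)^{-1}$ many $j$'s is what produces the term $\Im(\tau)^{-\alpha-1}$ in the conclusion.

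Second, in your boundary case $j=1$ you bound $\Im(\tau')^{-\beta}$ by $(2/\sqrt{3})^{\beta}$, implicitly using $\Im(\tau')\geqs\sqrt{3}/2$. That is the floor of the $\SL_2(\ZZ)$ fundamental domain, not of $\mathcal{D}$. The $\Gamma_\theta$ fundamental domain~\eqref{eq:funddom} has cusps at $\pm1$, so $\Im(\tau')$ can be arbitrarily small. The fix is again the monotonicity you already invoked: $\Im(\tau')=\Im(\tau_0)\geqs\Im(\tau)$, hence $\Im(\tau')^{-\beta}\leqs\Im(\tau)^{-\beta}$. Once both corrections are made, the sum $\sum_j(|\tau_j|^{\alpha}+\Im(\tau_j)^{-\beta})$ is bounded by $(m+1)\Im(\tau)^{-\alpha}+(m+1)\Im(\tau)^{-\beta}$ with $m+1\leqs\Im(\tau)^{-1}$, which yields the stated $\Im(\tau)^{-\alpha-1}+2\Im(\tau)^{-\beta-1}$ (plus the $|\tau|^\alpha$ term coming from the endpoint), and your argument then agrees with the paper's.
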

\begin{proof}
	Let us consider the case when
	\[A \= ST^{2n_m}ST^{2n_{m-1}}S\dots T^{2n_1}S.\]
	By applying the cocycle relation repeatedly, we get that
	\[\psi_A \= \psi_S + \psi_S|A_1 +  \psi_S|A_2
	+ \dots + \psi_S|A_m,\]
	where we write $A_j = T^{2n_j}S\dots T^{2n_1}S$. Hence
	\[|\psi_A(\tau')| \leqs \sum_{j=0}^{m}\frac{|\psi_S(\tau_j)|}{|c_j\tau'+d_j|^{k}},\]
	where $A_j = (\smat {a_j}{b_j}{c_j}{d_j})$ and $\tau_j$ are defined by
	\begin{equation*}
		\begin{cases}
			\tau_0 \= \tau',\\
			\tau_{j} \= 2n_j - 1/\tau_{j-1}.
		\end{cases}
	\end{equation*}
	Under these definitions $\tau_j = \frac{a_j\tau'+b_j}{c_j\tau'+d_j}$
	and $\tau=-1/\tau_m$. Multiplying both sides of the above inequality by $\Im(\tau')^{k/2}$ we get
		\[\Im(\tau')^{k/2}|\psi_A(\tau')| \leqs \sum_{j=0}^{m}
		\Im(\tau_j)^{k/2}|\psi_{S}(\tau_j)|\]
	Lemma~\ref{lem:contfrac} implies that $\Im(\tau)^{-1}\ge 2m-1$ and
	$\Im(\tau_j)\ge \Im(\tau)$ for $j=0,\dots,m$. We also have
	$|\tau_j|\le \Im(\tau)^{-1}$ for $j=0,\dots,m-1$,
	since $\Im(\tau)\le \Im(\tau_{j+1}) = \Im(\tau_{j})/|\tau_j|^2\le |\tau_j|^{-1}$. Therefore
		\begin{multline*}
		\Im(\tau')^{k/2}|\psi_A(\tau')| \leqs \sum_{j=0}^{m}
		\Im(\tau_j)^{k/2}(|\tau_j|^{\alpha}+\Im(\tau_j)^{-\beta})
		\leqs
		\Im(\tau')^{k/2}\sum_{j=0}^{m}(|\tau_j|^{\alpha}+\Im(\tau_j)^{-\beta})
		\\
		\leqs \Im(\tau')^{k/2}(|\tau|^{\alpha}+m\,\Im(\tau)^{-\alpha}+(m+1)\Im(\tau)^{-\beta})
		\\ \leqs \Im(\tau')^{k/2}(|\tau|^{\alpha}+\Im(\tau)^{-\alpha-1} +2\,\Im(\tau)^{-\beta-1}),
		\end{multline*}
	where in the last line we used $m+1\le 4m-2 \le \Im(\tau)^{-1}$.
	
	The proof in the other cases (i.e., when $A$ is of the
	form $T^{2n_k}ST^{2n_{k-1}}S\dots T^{2n_1}S$,  $ST^{2n_k}ST^{2n_{k-1}}S\dots T^{2n_1}$, or  $T^{2n_k}ST^{2n_{k-1}}S\dots T^{2n_1}$)
	can be completed using similar estimates.
\end{proof}

Next, we deal with the analytic part of the estimate.
For Theorem~\ref{thm:summation} the case $n=0$ of the lemma
below will suffice, but we need the general form for the proof of
Theorem~\ref{thm:isomorphism}.
\begin{lemma} \label{lem:kernel_estimate}
	For each $n,k\ge 0$ there exists an absolute constant $C_{n,k}>0$
	such that the inequality
	\[\big|x^{k}\frac{d^n}{dx^n}\,F_{\eps}(\tau,x)\big| \leqs C_{n,k}(1+\Im(\tau)^{-(n+k+1)/2})\]
	holds for all $\tau \in\mathcal{D}$.
\end{lemma}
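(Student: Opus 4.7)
My plan is to estimate the contour integral defining $F_\eps$ directly. I would first differentiate under the integral sign, writing $\frac{d^n}{dx^n} e^{i\pi x^2 z} = P_n(x,z) e^{i\pi x^2 z}$. The recursion $P_0 = 1$, $P_{n+1} = \partial_x P_n + 2\pi i x z P_n$ shows that $P_n(x,z) = \sum_{0 \le 2j \le n} c_{n,j}(2\pi i z)^{n-j} x^{n-2j}$ with universally bounded coefficients, hence
\[x^k \frac{d^n}{dx^n} F_\eps(\tau; x) \= \frac12 \int_{\gamma} K_\eps(\tau, z)\, x^k P_n(x, z)\, e^{i\pi x^2 z}\, dz,\]
where $\gamma$ is the semicircle from $-1$ to $1$.

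Next I would parametrize $\gamma$ by $z = e^{i\pi(1-t)}$, $t \in (0,1)$, so that $\Im(z) = \sin(\pi t)$ and $|z|=1$ on $\gamma$. The elementary bound $|x|^m e^{-\pi x^2 a} \leqs C_m a^{-m/2}$, obtained by maximizing in $x$, applied term-by-term to $P_n$ yields
\[|x|^k \,|P_n(x, z)|\, e^{-\pi x^2 \sin(\pi t)} \leqs C_{n,k}\, \sin(\pi t)^{-(n+k)/2},\]
so the problem reduces to bounding $\int_0^1 |K_\eps(\tau, z(t))|\,\sin(\pi t)^{-(n+k)/2}\, dt$ by a constant multiple of $1 + \Im(\tau)^{-(n+k+1)/2}$ uniformly for $\tau \in \mathcal{D}$.

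The main step is then to estimate $|K_\eps(\tau, z(t))|$. Using the explicit formulas~\eqref{eq:kernels} I would separate $K_\eps(\tau, z)$ into its $\tau$- and $z$-dependent pieces. The $z$-dependent factors ($\theta^3(z)\jth(z)$ in $K_+$, $\theta^3(z)(1-2\lambda(z))$ in $K_-$) can be controlled via the $\eta$-product expressions for $\theta$, $\thV$, $\thW$, $\jth$ combined with the transformations~\eqref{eq:transformS} and~\eqref{eq:transformT}: these show that the factor decays (in fact, exponentially fast in $|1\mp z|^{-1}$) as $z \to \pm 1$ along $\gamma$, more than enough to absorb the blow-up of $\sin(\pi t)^{-(n+k)/2}$ at $t = 0, 1$. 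For the $\tau$-dependent piece, the $\Gamma_\theta$-invariance of $|\theta(\tau)|\Im(\tau)^{1/4}$ (because $\theta$ has weight $1/2$) gives $|\theta(\tau)| \leqs C\,\Im(\tau)^{-1/4}$ on $\mathcal{D}$, while the cuspidal expansion~\eqref{eq:jthetacusp} and its analogues for $\theta(\tau)$ and $1-2\lambda(\tau)$ obtained via~\eqref{eq:transformS}, \eqref{eq:transformT} bound their growth near the cusps $\pm 1$ of $\mathcal{D}$. The denominator $|\jth(z)-\jth(\tau)|$ stays away from zero in the interior of $\mathcal{D}$ because $\jth(\mathcal{D})$ is disjoint from the slit $[1/64,\infty) \ni \jth(z(t))$, and near the cusps $\pm 1$ it grows like $|\jth(\tau)|$.

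The main obstacle will be the bookkeeping in the regime where both $\tau$ and $z$ simultaneously approach the cusps $\pm 1$: one must balance the large $|\jth(\tau)|$ and $|\jth(z)|$ against the decaying $|\theta(\tau)|$ and $|\theta^3(z)|$, and verify that the final power of $\Im(\tau)^{-1}$ is exactly $(n+k+1)/2$ rather than something larger. The extra half-power beyond the $(n+k)/2$ supplied by the $x$-bound is contributed by the $\Im(\tau)^{-1/4}$ growth of $|\theta(\tau)|$ together with another $\Im(\tau)^{-1/4}$ arising from the ratio of the remaining $\tau$-factors to $\jth(z)-\jth(\tau)$ near the cusps.
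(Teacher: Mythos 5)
Your overall setup — differentiating under the integral, the elementary bound $|x|^m e^{-\pi x^2 \Im z}\ll \Im(z)^{-m/2}$, the $\eta$-product control of the $z$-factor near the cusps, and the observation $|\theta(\tau)|\ll\Im(\tau)^{-1/4}$ on $\mathcal{D}$ — matches the beginning of the paper's argument. But there is a genuine gap at the central step, and it is not the cusp bookkeeping you flagged. You plan to bound $\int_\gamma|K_\eps(\tau,z)|\sin(\pi t)^{-(n+k)/2}\,|dz|$ over the \emph{original} semicircle $\gamma=\partial\mathcal{D}\cap\{|z|=1\}$, justifying the non-vanishing of the denominator by "$\jth(\mathcal{D})$ is disjoint from $[1/64,\infty)$." Disjointness does not give a uniform lower bound: $K_\eps(\tau,\cdot)$ has a simple pole at $z=\tau$, and as $\tau\in\mathcal{D}$ approaches the interior of the arc (say $\tau=i(1+\eta)$, $\eta\to0^+$), the pole approaches $\gamma$ while $\Im(\tau)$ stays near $1$. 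The absolute-value integral then picks up a contribution $\sim\int_{|z-\tau'|\le\delta}|z-\tau|^{-1}|dz|\sim\log(1/\eta)\to\infty$ (the exponential factor does not rescue you — take $x=0$ and $n$ even so that $P_n(0,z)\ne0$), whereas the right-hand side $C_{n,k}(1+\Im(\tau)^{-(n+k+1)/2})$ stays bounded. The true integral is bounded because of cancellation across the singularity, which a pointwise $|\cdot|$ estimate destroys.

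The paper avoids exactly this by first using Schwarz reflection $F_\eps(-\bar\tau)=\overline{F_\eps(\tau)}$ to restrict to $\tau\in\mathcal{D}_1$ (where $\Im\jth(\tau)<0$), and then deforming the contour from the arc to $\ell\cup S\ell$, where $\ell$ is the $\jth$-preimage of the ray $\{\Re w=1/64,\ \Im w>0\}$. This puts the contour strictly inside $\mathcal{D}\sm\mathcal{D}_1$, away from the pole at $z=\tau$, and yields the clean separation $|\jth(z)-\jth(\tau)|\gg\sqrt{|\jth(z)|^2+|\jth(\tau)|^2}$ (after discarding a neighborhood of $\tau=i$ via the maximum principle), which is what makes the remaining bookkeeping close. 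Without a comparable contour deformation — or an explicit treatment of the principal-value cancellation near $z=\tau$ — your plan cannot produce a bound uniform over $\tau\in\mathcal{D}$.
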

\begin{proof}
Let $\tau$ be any point in~$\mathcal{D}$. Since $F_{\eps}(it)$ is real
for all $t>0$, from the Schwarz reflection principle we get that
	\begin{equation}\label{eqn: reflection}
    F_{\eps}(-\ol{\tau}) \= \ol{F_{\eps}(\tau)}.
	\end{equation}
Using this symmetry we reduce the inequality to the
case $\tau\in\mathcal{D}_1$, where
$\mathcal{D}_1 = \{\tau\in\mathcal{D} \: \Re(\tau) \in (-1,0)\}$.
Observe that $\Im(\jth(\tau)) < 0$ for all $\tau\in\mathcal{D}_1$
and~$\Im(\jth(\tau)) \ge 0$ for
all $\tau\in\mathcal{D}\sm\mathcal{D}_1$.
Indeed, since $J$ is a Hauptmodul, the map $J:\mathcal{D}\to\CC$ is
injective. The identity~\eqref{eqn: reflection} for $\jth$ implies that
for $\tau\in\mathcal{D}$ the value $\jth(\tau)$ is real if and only
if $\tau$ lies on the imaginary axis. It is easy to see
from~\eqref{eq:jtheta} that $\Im(\jth(\tau))<0$
for $\tau\in\mathcal{D}_1$ and $\Im(\tau)\gg1$.
Hence, this inequality also holds for all $\tau\in\mathcal{D}_1$.
	
Define
	\[L \= \{w\in\CC \;|\; \Re(w)=\jth(i)=1/64,\ \Im(w)>0\},\]
and let $\ell$ be the preimage of $L$ under the
map $\jth\colon\mathcal{D}\to\CC$
(see Figure~\ref{fig: deforming the countour}).
Then $\ell$ is a smooth path contained
in $\mathcal{D}\sm \mathcal{D}_1$ and goes from $i$ to $1$.
We set $\gamma$ to be the path $S\ell\cup \ell$ that goes
from $-1$ to $1$. Note that $|z|$ and $|z|^{-1}$ are bounded
on $\gamma$ and that $\gamma$ has finite length (this fact will
follow from the computations below).

As in the proof of Proposition~\ref{prop:schwartz_eigenfunction}
let~$Q_n(x,z)$ be a polynomial defined by~\eqref{eq:polyqk}.
We have
	\[x^k\frac{d^n}{dx^n}\,F_{\eps}(\tau,x)=
    \frac{1}{2}\int_{-1}^{1}
    K_{\eps}(\tau,z)\,x^k\,Q_n(x,z)\,e^{i\pi x^2 z}\dz.\]
From~\eqref{eq:kerneltrans} we find
	\[x^k\frac{d^n}{dx^n}\,F_{\eps}(\tau,x)=\frac{1}{2}\int_{i}^{1}K_{\eps}(\tau,z)\,x^k\,\big(Q_n(x,z)\,e^{i\pi x^2 z}+\eps(-iz)^{-1/2}Q_n(x,-1/z)\,e^{i\pi x^2 (-1/z)}\big)\,\dz.\]
Without loss of generality, we may assume $x\geq0$. Since $|z|$ is
bounded for $z\in\gamma$, any monomial $z^\alpha x^\beta$
with $0\le \beta\le n$ is majorized by~$1+x^n$, and thus for all
such $z$ we have~$|x^k\,Q_n(x,z)|\ll_{n,k,\gamma} 1+x^{n+k}$. Then
	\begin{align} \label{eq:kernel_estimate}
	\big|x^k\,\frac{d^n}{dx^n}\,F_{\eps}(\tau,x)\big|
    \es{\ll}
	\int_\ell |K_{\eps}(\tau,z)\,x^k|\,\big| Q_n(x,z)\,e^{i\pi x^2 z}+\eps(-iz)^{-1/2}\,Q_n(x,-1/z)\,e^{i \pi x^2(-1/z)}\big|\,|\dz|
	\notag\\
    \es{\ll}
	\int_\ell |K_{\eps}(\tau,z)|\,(1+x^{k+n})\,\big(e^{-\pi x^2 \Im(z)}+|z|^{-1/2}\,e^{-\pi x^2\Im(-1/z)}\big)\,|\dz|.
	\end{align}
Next, we observe that
	\[(1+x^{k+n})\,e^{-\pi x^2 \Im(z)}
	\es{\ll_{k+n}} 1+\Im(z)^{\frac{-k-n}{2}}.\]
Note, that $1\leq|z|\ll1$ for $z\in\ell$. Hence, we get
    \begin{align} \label{eq:kernel_estimate1}
	\big|x^k\,\frac{d^n}{dx^n}\,F_{\eps}(\tau,x)\big|&
    \es{\ll}
	\int_\ell |K_{\eps}(\tau,z)|\,
    \big(1+\Im(z)^{\frac{-k-n}{2}}+|z|^{-1/2}\,
    \Im(\tfrac{-1}{z})^{\frac{-k-n}{2}}\big)\,|\dz|
	\notag\\
    &\es{=}
    \int_\ell |K_{\eps}(\tau,z)|\,
    \big(1+\Im(z)^{\frac{-k-n}{2}}+|z|^{k+n-1/2}\,
    \Im(z)^{\frac{-k-n}{2}}\big)\,|\dz|
    \notag \\
    & \es{\ll}\int_\ell
    |K_{\eps}(\tau,z)|\,\big(1+\Im(z)^{\frac{-k-n}{2}}\big)\,|\dz|.
    \end{align}
	
Without loss of generality, we may also assume that~$|\tau-i|\ge 1/10$,
since we can recover the inequality of the Lemma in the
region~$|\tau-i|<1/10$ by applying the maximum modulus principle
together with the functional equation for $F_{\eps}$.
	
For $\tau$ with $\Im(\tau)\ge1/2$ and $|\tau-i|>1/10$ we can
estimate $|K_{\eps}(\tau,z)| \ll |\theta(z)|^3$ with a constant
independent of $\tau$. Since $|\theta^3(z)|$ behaves
like $\Im(z)^{-2}e^{-\pi/\Im(z)}$ as $z$ approaches~$1$, by
splitting the integral into~$\{z\colon \Im(z)\ge1/x\}$
and~$\{z\colon\Im(z) < 1/x\}$ we obtain
	\[|F_{\eps}(\tau,x)| \es{\ll} (1+x^2)e^{-c\pi x},\]
which clearly implies the needed inequality.
    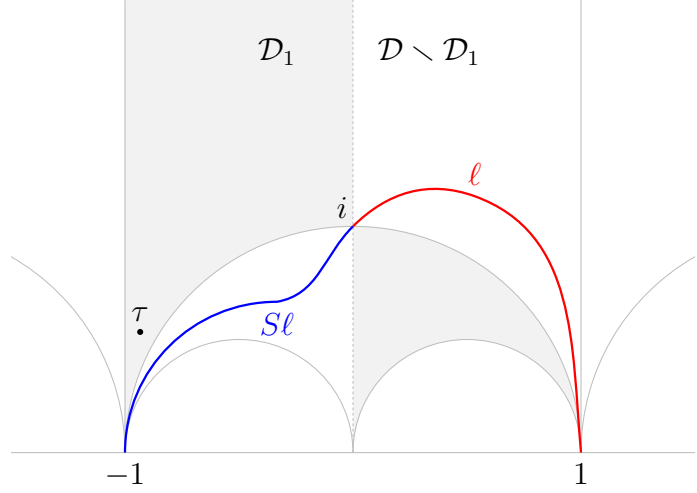
\begin{figure}
	\centering
	\begin{tikzpicture}
	\definecolor{cv0}{rgb}{0.95,0.95,0.95}
	\fill[color=cv0] (-3,0) arc (180:90:3) -- (0,6) -- (-3,6);
	\fill[color=cv0] (0,0) arc (180:0:1.5) arc (0:90:3) -- (0,0);
	\draw (-3,0)node[below] {$-1$};
	\draw (3,0) node[below] {$1$};
	\draw (0,3) node[xshift=-4,yshift=7] {$i$};
	\draw[dash pattern=on 1pt off 1pt, color=lightgray] (0,0) -- (0,6);
	\draw[lightgray] (3,0) arc (180:120:3);
	\draw[lightgray] (-3,0) arc (0:60:3);
	\draw[lightgray] (3,0) -- (3,6);
	\draw[lightgray] (-3,0) -- (-3,6);
	\draw[lightgray] (-4.5,0) -- (4.5,0);
	\draw[lightgray] (3,0) arc (0:180:3);
	\draw[lightgray] (3,0) arc (0:180:1.5);
	\draw[lightgray] (-3,0) arc (180:0:1.5);
	%\draw (-1.95,1.6) node[above]{$\mathcal{F}$};
	\draw (-1,5) node[above]{$\mathcal{D}_1$};
	\draw (1,5) node[above]{$\mathcal{D}\sm \mathcal{D}_1$};
	\fill (-2.8,1.6) circle (0.04) node[above] {$\tau$};
	%\fill (1.853,2.912) circle (0.04) node[above] {$-1/\tau$};
	%\draw (1.5,2.598076) -- (1.5,6);
	%\draw (-1.5,2.598076) -- (-1.5,6);
	\draw[color=blue, line width=0.3mm] (-3,0) to[out=90,in=180] (-1,2) node[below]{$S\ell$} to[out=10,in=225] (0,3);
	\draw[color=red, line width=0.3mm] (0,3) to[out=45,in=160] (1.6,3.4) node[above]{$\ell$} to[out=-20,in=95] (3,0);
	\end{tikzpicture}
	\caption{Deforming the contour of integration.}
    \label{fig: deforming the countour}
    \end{figure}
	
Now let $\Im(\tau)<1/2$. To bound $|K_{\eps}(\tau,z)|$ we use the
following estimates
	\begin{align*}
	|\theta(z)|& \es{\ll} |\jth(z)|^{-1/8}\,\Im(z)^{-1/2},\\
	|1-2\lambda(z)|& \es{\ll} |\jth(z)|^{1/2},
	\end{align*}
which hold for all $z\in\mathcal{D}$ near the cusp $1$ (such~$z$
correspond to large values of~$|\jth(z)|$).
The first inequality follows from the fact that $\theta^8(z)\jth(z)$
is a holomorphic modular form of weight~$4$ for~$\Gamma_{\theta}$ (the
term~$\Im(z)^{-1/2}$ comes from the modular transformation).
To prove the second inequality, simply note
that $(1-2\lambda(z))^2 \= 1-64\jth(z)$. Thus, we get
	\begin{align}
	|K_{+}(\tau, z)| &\es{\ll} \Im(\tau)^{-1/2}
	\frac{|\jth(\tau)|^{3/8}|\jth(z)|^{5/8}\Im(z)^{-3/2}}{|\jth(z)-\jth(\tau)|},\label{eq:KplusJestimate}\\
	|K_{-}(\tau, z)| &\es{\ll} \Im(\tau)^{-1/2}
	\frac{|\jth(\tau)|^{7/8}|\jth(z)|^{1/8}\Im(z)^{-3/2}}{|\jth(z)-\jth(\tau)|}.\notag
	\end{align}
From now on, we make all estimates for $z\in \ell$ with $\Im(z)<1/2$,
and we define $t>0$ in such a way that $\jth(z)=1/64+it$.
For such~$z$ we can use the following simple geometric estimate
(recall that $\Im(\jth(\tau))<0$)
	\begin{equation}\label{eq:Jestimate}|\jth(\tau)-\jth(z)| \es{\gg} \sqrt{|\jth(\tau)|^2+|\jth(z)|^2}.\end{equation}
Let $w\colon\CC\sm[0,\frac{1}{64})\to\mathcal{D}$ be the inverse
of~$\jth$ on $\mathcal{D}$, so that $z = w(1/64+it)$.
We have $\jth'(\tau) = i\pi f(\tau)\jth(\tau)$,
where $f(\tau) = \theta^4(\tau)(1-2\lambda(\tau))$ is a holomorphic
modular form of weight~$2$. Since $f$ does not vanish at the
cusp $1$, we have that $|f(z)| \gg \Im(z) ^{-2}$, and thus
	\begin{equation}\label{eq:dz}
	|\dz| \= |w'(1/64+it)|\,|\dt|
	\= \frac{|\dt|}{|\jth'(w(\frac{1}{64}+it))|}
	\es{\ll} \frac{|\dt|}{|\jth(z)|\cdot\Im(z)^{-2}}.
	\end{equation}
Note that this last estimate readily implies that $\ell$ has
finite length.

From inequality \eqref{eq:kernel_estimate1} it follows that it
is enough to find a bound for
	\[\int_\ell |K_\eps(\tau,z)|\,\Im(z)^{-m}\,|dz|\quad\mbox{ for } m\geq 0.\]
From inequalities \eqref{eq:KplusJestimate}, \eqref{eq:Jestimate}, \eqref{eq:dz} we deduce
	\[\int_\ell |K_{+}(\tau,z)|\,\Im(z)^{-m}\,|dz|\ll  \int_{0}^{\infty}\frac{|\jth(\tau)|^{3/8}t^{-3/8}\Im(z)^{1/2-m}}
	{\Im(\tau)^{1/2}\sqrt{t^2+|\jth(\tau)|^2}}\dt.\]
We will also need the estimate $|\jth(z)| \es{\gg} e^{\pi/\Im(z)}$ for $\Im(z)$ small enough. Indeed, this inequality follows from the $q$-expansion~\eqref{eq:jthetacusp} of $\jth(z)$ at the cusp $1$.
This implies that $\Im(z)^{-m}\ll_m\log^m(1+|\jth(z)|)$.
Thus, we have
	\begin{align*}
	 \int_\ell |K_{+}(\tau,z)|\,\Im(z)^{-m}\,|dz| & \ll \Im(\tau)^{-1/2}\int_{0}^{\infty}\frac{|\jth(\tau)|^{3/8}
	t^{-3/8}\log^m(1+t)\dt}{\sqrt{|\jth(\tau)|^2+t^2}}\\
	& \=
	\Im(\tau)^{-1/2}\int_{0}^{\infty}\frac{t^{-3/8}\log^m(1+t|\jth(\tau)|)\dt}{\sqrt{1+t^2}}.
	\end{align*}
By using an obvious inequality $\log(1+ab)\le \log(1+a)+\log(1+b)$,
we estimate the last integral by
	\begin{equation*}
	\Im(\tau)^{-1/2}\sum_{j=0}^{m}\binom{m}{j}\log^{j}(1+|\jth(\tau)|)	\int_{0}^{\infty}\frac{t^{-3/8}\log^{m-j}(1+t)\dt}{\sqrt{1+t^2}}
	\es{\ll} \sum_{j=0}^{m}c_{j,m}\Im(\tau)^{-j-1/2},
	\end{equation*}
where $c_{j,m}=\binom{m}{j}\int_{0}^{\infty}(1+t^2)^{-1/2}t^{-3/8}\log ^{m-j}(1+t)\dt$ are finite constants, and we have used the
inequality $\log(1+|\jth(\tau)|) \ll \Im(\tau)^{-1}$ that
follows from~\eqref{eq:jthetacusp}.
	
The estimates in the case ``$\eps=-$'' are completely analogous,
except that we need to change the exponent $3/8$ to $7/8$.
\end{proof}

We are now ready to prove Theorem~\ref{thm:mainest}.
\begin{proof}[Proof of Theorem~\ref{thm:mainest}]
	Let $\tau \in \HH$ be an arbitrary point in the upper half-plane with $\Im(\tau)\le1$
	that does not lie on the boundary of the fundamental domain $\mathcal{D}$
	or any of its translates by elements of~$\Gamma_{\theta}$.
	Let $\tau=\frac{a\tau'+b}{c\tau'+d}$,
	where $\tau'\in\mathcal{D}$ and $A=(\smat abcd) \in \Gamma_{\theta}$.
	By~\eqref{eq:funceq} we have
	\[
	\chi_{\eps}(A)j_{\theta}(\tau', A)F_{\eps}\Big(\frac{a\tau'+b}{c\tau'+d}\Big)
	\= F_{\eps}(\tau') - \phi_{A}(\tau').
	\]
	Combining the results of Lemma~\ref{lem:kernel_estimate} and
	Lemma~\ref{lem:cocyclelemma} (which we apply to $\psi_A=\phi_A$ with~$\alpha=0$ and~$\beta=1/2$) we obtain
	\begin{multline*}
		|F_{\eps}(\tau)|
		\leqs \frac{\Im(\tau')^{1/4}}{\Im(\tau)^{1/4}}|F_{\eps}(\tau')| + \frac{\Im(\tau')^{1/4}}{\Im(\tau)^{1/4}}|\phi_{A}(\tau')| \\
		\leqs C_0\frac{\Im(\tau')^{1/4} + \Im(\tau')^{-1/4}}{\Im(\tau)^{1/4}}
		+ \Im(\tau')^{1/4}(1 + \Im(\tau)^{-5/4}+2\,\Im(\tau)^{-7/4}).
	\end{multline*}
	(Here $C_0$ is the constant from Lemma~\ref{lem:kernel_estimate}.)
	If $c=0$, then $\Im(\tau')=\Im(\tau)$ and thus
		\[
		|F_{\eps}(\tau)| \leqs C_0(1+\Im(\tau)^{-1/2}) +\Im(\tau)^{1/4}+\Im(\tau)^{-1}+2\,\Im(\tau)^{-3/2}.
		\]
	If, on the other hand, $c \ne 0$, then we have $\Im(\tau) < \Im(\tau')$ and
		\[\Im(\tau)\Im(\tau') \= \frac{\Im(\tau')^2}{|c\tau'+d|^2} \le 1,\]
	and we get the estimate
		\[
		|F_{\eps}(\tau)| \leqs 2C_0\Im(\tau)^{-1/2} + \Im(\tau)^{-1/4}+\Im(\tau)^{-3/2}+2\,\Im(\tau)^{-2}.
		\]
	Therefore, an application of Lemma~\ref{lem:heckebound} gives
		\[|b_n^{\eps}(x)| \ll n^{2}.\]
\end{proof}
The exponent ``$2$'' in Theorem~\ref{thm:mainest} is not optimal,
but for the proof of Theorem~\ref{thm:summation} any polynomial bound would suffice.
\smallskip

\section{Proof of the main results}
\label{sec:funcan}
Now that we know that $b_n^{\eps}(x)$ have polynomial growth in $n$,
the proof of Theorem~\ref{thm:summation} and Theorem~\ref{thm:isomorphism} is not hard.

Recall the definition of Schwartz functions:
	\[\mathcal{S} \= \{f\in C^{\infty}(\RR) \colon \|f\|_{\alpha,\beta}
    < \infty\; \forall \alpha,\beta\ge 0\},\]
where the seminorms $\|\cdot\|_{\alpha,\beta}$ are defined by
	\[\|f\|_{\alpha,\beta}\=\sup_{x\in\RR}|x^{\alpha}f^{(\beta)}(x)|.\]
Convergence in $\mathcal{S}$ is defined in terms of this family of
seminorms, i.e., $f_n\to f$ if and only if
$\|f_n-f\|_{\alpha,\beta}\to 0$ for all $\alpha,\beta\ge0$.

\begin{proof}[Proof of Theorem~\ref{thm:summation}]
Let $\mathcal{S}_{even}$ be the space of even Schwartz functions.
Let us define
	\[a_n(x)\es{:=}\frac{b^+_n(x)+b^-_n(x)}{2}.\]
Lemma~\ref{prop:schwartz_eigenfunction} implies that
	\[\widehat{a}_n(x)\=\frac{b^+_n(x)-b^-_n(x)}{2}.\]
Our aim is to show that \eqref{eq:summation} holds for all $f\in \mathcal{S}_{even}$. Theorem~\ref{thm:mainest} implies that the series
	\[\sum_{n=0}^{\infty} a_{n}(x)f(\sqrt{n})+\sum_{n=0}^{\infty} \widehat{a}_{n}(x)\widehat{f}(\sqrt{n})\]
converges absolutely.
Moreover, it follows from the definition of $b_n^{\eps}$ and the
functional equations~\eqref{eq:genfun_main} that for any $\tau\in\HH$ we have
\begin{equation}\label{eq:summation_for_gaussians}
	e_{\tau}(x) \= \sum_{n=0}^{\infty} a_{n}(x)\,e_{\tau}(\sqrt{n})+\sum_{n=0}^{\infty} \widehat{a}_{n}(x)\,\widehat{e}_{\tau}(\sqrt{n}),
\end{equation}
where $e_{\tau}(x)=e^{i\pi\tau x^2}$.

For $x\geq 0$ consider the linear functional $\phi_x$ on $\mathcal{S}_{even}$ given by
	\[\phi_x(f)\es{:=}
	f(x)-\sum_{n=0}^{\infty} a_{n}(x)f(\sqrt{n})-\sum_{n=0}^{\infty} \widehat{a}_{n}(x)\widehat{f}(\sqrt{n}).\]
It follows from~Theorem~\ref{thm:mainest} that $\phi_x$ is a tempered distribution,
i.e., it is continuous with respect to convergence in $\mathcal{S}_{even}$.
From equation~\eqref{eq:summation_for_gaussians} we see that~$\phi_x$ vanishes on the
subspace spanned by $\{e_\tau\}_{\tau\in\HH}$. Our goal is to show that~$\phi_x$
vanishes on the whole $\mathcal{S}_{even}$.

Let $\mathcal{C}$ be the space of compactly supported even $C^\infty$ functions
on $\RR$.
Recall, that $\mathcal{C}$ dense in $\mathcal{S}_{even}$ (see~\cite{Vl}*{pp.~74-75}).
Therefore, it suffices to show~\eqref{eq:summation} for $f\in\mathcal{C}$. Let $f$ be a function in $\mathcal{C}$. We may assume that
\[f(x)\=F(x^2)\,e^{-\pi x^2}\]
where $F$ is a $C^\infty$ function with compact support on $\RR$. Consider the one-dimensional Fourier transform of $F$
	\[\ftt{F}(s):=\int_{-\infty}^{\infty}F(t)\,e^{-2\pi i s t}\,\dt.\]
Note, that $\widehat{F}$ is a Schwartz function. By the Fourier
inversion formula we have
	\[f(x)\=F(x^2)\,e^{-\pi x^2}
	\=\int_{-\infty}^{\infty}\ftt{F}(s)\, e^{2\pi i s x^2-\pi x^2}\,ds
	\= \int_{-\infty}^{\infty}\ftt{F}(s)\,e_{i+2s}(x)\,ds.\]
Define
    \[h_T:=\int_{-T}^{T}\ftt{F}(s)\,e_{i+2s}(x)\,ds.\]
It is easy to see that for all seminorms $\|\cdot\|_{\alpha,\beta}$
	\[\|f-h_T\|_{\alpha,\beta}\to 0\quad\mbox{as}\;\,T\to\infty. \]
Therefore, for all $x\geq0$
	\[\phi_x(f-h_T)\to 0\quad\mbox{as}\;\,T\to\infty. \]
On the other hand, we have
    \[\phi_x(h_T)=\int_{-T}^{T}\ftt{F}(s)\,\phi_x(e_{i+2s})
    \,ds=0.\]
This finishes the proof of Theorem~\ref{thm:summation}.\end{proof}

We are also ready to prove Theorem~\ref{thm:isomorphism}.
\begin{proof}[Proof of Theorem~\ref{thm:isomorphism}] First, we observe that
the image of $\Psi$ is contained in the kernel of~$L$. Indeed, the Poisson
summation formula implies
	\[\sum_{n\in\ZZ}f(n)\=\sum_{n\in\ZZ}\widehat{f}(n)\]
for all $f\in\mathcal{S}$ as well as $f\in\mathcal{S}_{even}$. This identity is equivalent to $L\circ\Psi(f)=0$.

Next, we construct the function $\Phi:\ker L\to \mathcal{S}_{even}$ such that $\Psi\circ\Phi=\mathbb{I}_{\ker L}$. To this end we consider the map
	\[\Phi:\ker L\to \mathcal{S}_{even},\qquad ((x_n),(y_n))\mapsto \sum_n x_n\,a_n(x)+y_n\,\widehat{a}_n(x).\]
We need to show that $\Phi$ is well-defined.
Since $\mathcal{S}$ is complete with respect to the family of norms $\|\cdot\|_{\alpha,\beta}$ it is enough to prove that for any fixed~$\alpha, \beta\ge0$ the sequences $(\|a_n\|_{\alpha,\beta})_n$
and $(\|\ftt{a_n}\|_{\alpha,\beta})_n$ have at most polynomial growth
in $n$. Equivalently, it is enough to prove that the
sequences $(\|b_n^{\eps}\|_{\alpha,\beta})_n$ have polynomial growth.
	
As before, let $Q_k(x,z)$ be the polynomial defined
by~\eqref{eq:polyqk}. Let $U(\tau,x)$ be the generating function
	\[U(\tau,x) \= x^{\alpha}\frac{d^{\beta}}{dx^{\beta}}F_{\eps}(\tau,x)
	\= x^{\alpha}\,\sum_{n=0}^{\infty}
	\frac{d^{\beta}}{dx^{\beta}}b_n^{\eps}(x)\,e^{i\pi n \tau}.\]
Then, following the proof of Proposition~\ref{prop:genfunceq}, we see that
the generating function $U$ satisfies the functional equation
	\[U(\tau) - (U|_{1/2}^{-\eps}A)(\tau) \= \phi_{A}(\tau),\]
where $\phi_A$ is the cocycle defined by
	\begin{equation*}
		\begin{split}
			\phi_{T^2}(\tau) &\= 0,\\
			\phi_{S}(\tau) &\=
			x^{\alpha} Q_{\beta}(x,\tau)e^{i\pi x^2\tau} + \eps(-i\tau)^{-1/2}x^{\alpha}Q_{\beta}(x,-1/\tau)e^{i\pi x^2(-1/\tau)}\,.
		\end{split}
	\end{equation*}
Using the estimates
	\[|x^k\tau^l e^{i\pi x^2\tau}|\es{\ll} |\tau|^{l} \Im(\tau)^{-k/2}
	\es{<} \Im(\tau)^{-k}+|\tau|^{2l}\]
and
	\[|x^k\tau^{-l} e^{i\pi x^2(-1/\tau)}|
	\es{\ll} |\tau|^{k-l} \Im(\tau)^{-k/2}
	\es{<} \Im(\tau)^{-k}+|\tau|^{2k-2l},\]
and in case $k<l$ the replacing $|\tau|^{2k-2l}$ by $\Im(\tau)^{2k-2l}$,
we see that Lemma~\ref{lem:cocyclelemma} can be applied
to $\{\phi_A\}_{A\in\Gamma_{\theta}}$
(for some choice of $\alpha$ and $\beta$ in Lemma~\ref{lem:cocyclelemma}).
Lemma~\ref{lem:kernel_estimate} implies that for $\tau\in\mathcal{D}$
we have
	\[U(\tau,x) \es{\ll} 1+\Im(\tau)^{-(\alpha+\beta+1)/2}.\]
Arguing the same way as in the proof of Theorem~\ref{thm:mainest}
we obtain that for some $C>0$ and all~$\tau\in\HH$ with $\Im(\tau)<1$
we have $|U(\tau,x)| \ll \Im(\tau)^{-C}$,
which implies that $\|b_{n}^{\eps}\|_{\alpha,\beta} \ll n^C$. Therefore, the map $\Phi$ is well-defined.

Now Theorem~\ref{thm:summation} implies that $\Phi\circ\Psi=\mathbb{I}_{\mathcal{S}_{even}}$ and Proposition~\ref{prop:schwartz_eigenfunction} implies that $\Psi\circ\Phi=\mathbb{I}_{\ker L}$. This finishes the proof.
\end{proof}

\section{Interpolation basis for odd functions}
\label{sec:odd}
The case of odd Schwartz functions is very similar to the even case.
The proofs are easy enough to adapt to this case,
so we will just give the general outline.
The role of the Gaussian $e_{\tau}(x)=e^{i\pi \tau x^2}$ is played by
the Schwartz function
	\[o_{\tau}(x) \= xe^{i\pi \tau x^2},\]
that satisfies
	\[\ftt{o_{\tau}}(\xi) \= -i(-i\tau)^{-3/2} o_{-1/\tau}(\xi).\]
To construct the interpolation basis for odd Schwartz functions we use
the same idea as before: to get an eigenfunction we
integrate $o_{\tau}$ over $\tau$ with some ``modular weight''.
More precisely, let $h_n^{\eps}\colon\HH\to\CC$
be holomorphic functions with the following properties:
\begin{align*}
	\begin{split}
		h_n^{\eps}(z+2) &\= h_n^{\eps}(z),\\
		(-iz)^{-1/2}h_n^{\eps}(-1/z) &\= \eps h_n^{\eps}(z),\\
		h_n^{+}(z) &\= q^{-n/2}+O(q^{1/2}),\; z\to i\infty,\\
		h_n^{-}(z) &\= q^{-n/2}+O(1),\; z\to i\infty,\\
		h_n^{\eps}(1+i/t) &\es{\to} 0,\; t\to\infty.
	\end{split}
\end{align*}
Once again, we may assume that they are of the form
\begin{align} \label{eq:ansatz_odd}
	\begin{split}
		h_n^{+}(z)& \= \theta(z)Q_n^{+}(\jth^{-1}(z)),\\
		h_n^{-}(z)& \= \theta(z)(1-2\lambda(z))Q_n^{-}(\jth^{-1}(z)),
	\end{split}
\end{align}
where $Q_n^{\pm}\in\QQ[x]$ are monic of degree $n$ and $Q_n^{-}$ has no constant term. The first few of these functions are	
	\begin{align*}
	\begin{split}
	h_0^{+}& \= \theta,\\
	h_1^{+}& \= \theta\cdot(\jth^{-1}-26),\\
	h_2^{+}& \= \theta\cdot(\jth^{-2}-50\jth^{-1}+76),
	\end{split}
	\quad
	\begin{split}
	h_1^{-}& \= \theta\cdot(1-2\lambda)\cdot(\jth^{-1}),\\
	h_2^{-}& \= \theta\cdot(1-2\lambda)\cdot(\jth^{-2}-18\jth^{-1}),\\
	h_3^{-}& \= \theta\cdot(1-2\lambda)\cdot(\jth^{-3}-42\jth^{-2}+168\jth^{-1}).
	\end{split}
	\end{align*}
By the same arguments as in the even case, we establish generating functions for $h_n^{\eps}$,
which turn out to be the same, except for switching the roles of $\tau$
and $z$.
\begin{theorem}
	The generating functions for $\{h_n^{+}(z)\}_{n\ge0}$
	and $\{h_n^{-}(z)\}_{n\ge1}$ are given by
	\begin{align} \label{eq:kernels_odd}
		\begin{split}
			\sum_{n=0}^{\infty} h_n^{+}(z)e^{i\pi n\tau} \= \frac{\theta^3(\tau)(1-2\lambda(\tau))\theta(z)\jth(z)}{\jth(z)-\jth(\tau)}
			\= -K_{-}(z, \tau),\\
			\sum_{n=1}^{\infty} h_n^{-}(z)e^{i\pi n\tau} \=
			\frac{\theta^3(\tau)\jth(\tau)\theta(z)(1-2\lambda(z))}{\jth(z)-\jth(\tau)} \= -K_{+}(z, \tau).
		\end{split}
	\end{align}
\end{theorem}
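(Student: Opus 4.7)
The plan is to mirror the proof of the even-case generating function identity (for $g_n^{\pm}$) almost verbatim, viewing each of the two identities as a Fourier expansion in $q_\tau^{1/2}$ with $z\in\HH$ fixed. The second equality in each line, e.g., $\frac{\theta^3(\tau)(1-2\lambda(\tau))\theta(z)\jth(z)}{\jth(z)-\jth(\tau)} = -K_{-}(z,\tau)$, is purely algebraic: since $\jth(\tau)-\jth(z) = -(\jth(z)-\jth(\tau))$, swapping $z\leftrightarrow\tau$ in the explicit formula for $K_{-}$ from~\eqref{eq:kernels} reproduces the right-hand side up to sign. The substance is therefore the series identity.

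First I would define $\tilde h_n^{\eps}(z)$ by the $q_\tau^{1/2}$-expansion of the right-hand side. Using the geometric series
	\[\frac{\jth(z)}{\jth(z)-\jth(\tau)} \= \sum_{n\geqs 0}\jth^n(\tau)\jth^{-n}(z),\]
valid for $\Im(\tau)$ sufficiently large, together with the $q_\tau$-expansions $\jth(\tau)=q_\tau^{1/2}+O(q_\tau)$ and $\theta^3(\tau)(1-2\lambda(\tau))=1+O(q_\tau^{1/2})$, one reads off that $\tilde h_n^{+}(z)=\theta(z)\tilde Q_n(\jth^{-1}(z))$ for some monic $\tilde Q_n\in\CC[x]$ of degree~$n$, and similarly $\tilde h_n^{-}(z)=\theta(z)(1-2\lambda(z))\tilde R_n(\jth^{-1}(z))$ with $\tilde R_n$ monic of degree~$n$ and $\tilde R_n(0)=0$. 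The modular transformation properties in $z$ of weight $1/2$ follow from the known transformation laws of $\theta$, $\lambda$ and $\jth$, and the decay at the cusp~$1$ follows from the $q$-expansion~\eqref{eq:jthetacusp} of $\jth^{-1}$ there.

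It then remains to identify $\tilde h_n^{\eps} = h_n^{\eps}$, which reduces to verifying the cusp-$\infty$ normalizations $\tilde h_n^{+}(z)=q^{-n/2}+O(q^{1/2})$ and $\tilde h_n^{-}(z)=q^{-n/2}+O(1)$. As in the even case, I would extract the $n$-th Fourier coefficient via Cauchy's theorem as a contour integral in the $q_\tau^{1/2}$-plane and change variable to $j=\jth(\tau)$ using identity~\eqref{eq:diff}, which expresses $\theta^4(\tau)(1-2\lambda(\tau))\jth(\tau)$ as the logarithmic derivative of~$\jth$ in the $q_\tau^{1/2}$-variable. The residue calculation at $j^{-1}=0$ then produces $\theta(z)Q_n^{+}(\jth^{-1}(z))$ in the $+$ case and $\theta(z)(1-2\lambda(z))Q_n^{-}(\jth^{-1}(z))$ in the $-$ case; by uniqueness of the polynomials $Q_n^{\pm}$ determined by~\eqref{eq:ansatz_odd}, these coincide with $h_n^{\eps}(z)$.

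There is no genuine obstacle here, since the argument is structurally identical to the proof of the previous theorem. The only point that requires care is bookkeeping: relative to the even case, the powers of $\theta$ on the $\tau$- and $z$-sides are interchanged (cubic on $\tau$, linear on $z$), and the automorphy weight in~$z$ is~$1/2$ rather than~$3/2$. Up to this substitution, the earlier proof carries over directly.
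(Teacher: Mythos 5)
Your proposal is correct and takes essentially the same route as the paper, which simply invokes the even-case argument (the Duke--Jenkins-style contour integral in the $q_\tau^{1/2}$-plane with change of variable $j=\jth(\tau)$ via~\eqref{eq:diff}) with the bookkeeping you describe: the roles of $\theta^3$ and $\theta$ on the $\tau$- and $z$-sides interchanged, and the $z$-weight dropped to $1/2$. The algebraic identification $-K_{\mp}(z,\tau)$ on the right-hand sides is also exactly as you say, by swapping the arguments in~\eqref{eq:kernels} and flipping the sign of the denominator.
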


Similarly to the even case, define $d_m^{\eps}\colon\RR\to\RR$ by
\[d_m^{\eps}(x) \= \frac12\int_{-1}^{1} h_m^{\eps}(z)\,xe^{i\pi x^2 z}\dz.\]
\begin{proposition} \label{prop:schwartz_eigenfunction_odd}
	The function $d_m^{\eps}\colon\RR\to\RR$ is odd, belongs to the Schwartz class, and satisfies
	\[\ftt{d_m^{\eps}}(x) \= (-i\eps)\,d_m^{\eps}(x)\]
	and
	\[d_m^{\eps}(\sqrt{n}) \= \delta_{n,m}\sqrt{n},\quad n\ge 1,\]
	where $\delta_{n,m}$ is the Kronecker delta. Moreover,
	\[\lim_{x\to 0}\frac{d_m^{+}(x)}{x} \= \delta_{m,0}.\]
\end{proposition}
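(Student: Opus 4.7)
The plan is to follow the proof of Proposition~\ref{prop:schwartz_eigenfunction} almost verbatim, with the odd Gaussian $o_z(x)=xe^{i\pi zx^2}$ playing the role of $e_z$. Oddness of $d_m^{\eps}$ is immediate from the definition, and membership in the Schwartz class is obtained by repeating the argument used for $b_m^{\eps}$: write $h_m^{\eps}(z)=\theta(z)R_m^{\eps}(z)$ with $R_m^{\eps}$ a polynomial in $\jth^{-1}$ and $1-2\lambda$, differentiate under the integral to obtain polynomial factors $xQ_k(x,z)$, and split the resulting $t$-integral at $\delta=(\sqrt{\pi}|x|)^{-1}$ exactly as before. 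Having $\theta$ in place of $\theta^3$ only makes the cusp estimates easier.

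For the Fourier-eigenvalue identity, I would start from $\widehat{o_z}(\xi)=-i(-iz)^{-3/2}\,o_{-1/z}(\xi)$, giving
\[
\widehat{d_m^{\eps}}(x) \= \frac{-i}{2}\int_{-1}^{1}h_m^{\eps}(z)\,(-iz)^{-3/2}\,o_{-1/z}(x)\,\dz.
\]
Using the identity $(-iz)^{-3/2}\,\dz = -(-iz)^{1/2}\,d(-1/z)$, substituting $z\mapsto-1/z$ (which reverses the endpoints $-1$ and $1$), applying the transformation law $h_m^{\eps}(-1/z)=\eps(-iz)^{1/2}h_m^{\eps}(z)$, and using the branch identity $(-iz)^{1/2}(i/z)^{1/2}=1$, the right-hand side should collapse to $-i\eps\,d_m^{\eps}(x)$. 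The only step that I expect to require genuine care is tracking the branches of $(-iz)^{\alpha}$ throughout; the paper's standing convention that $(-iz)^{\alpha}>0$ on the positive imaginary axis determines these unambiguously.

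For the interpolation values, I would deform the semicircular contour to a horizontal segment $[-1+ic,\,1+ic]$ with $c$ large, which is justified by Cauchy's theorem: since $h_m^{\eps}$ is $2$-periodic, the contributions along the two vertical segments at $\pm 1$ cancel. Inserting the $q$-expansion $h_m^{\eps}(z)=\sum_{k\ge -m}c_k\,e^{i\pi kz}$ and integrating termwise isolates the single coefficient $c_{-n}$, so
\[
d_m^{\eps}(\sqrt n) \= \sqrt n\,c_{-n},\qquad n\ge 1.
\]
By~\eqref{eq:ansatz_odd} together with the prescribed behavior at $i\infty$, $c_{-n}=\delta_{n,m}$ for $n\ge 1$. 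The same argument at $n=0$ gives $\lim_{x\to 0}d_m^{+}(x)/x=c_0=\delta_{m,0}$, since $h_m^{+}$ has no constant term unless $m=0$, in which case the constant term of $\theta(z)$ equals $1$. Aside from the branch bookkeeping in the Fourier transform step, this is a line-for-line translation of the even case.
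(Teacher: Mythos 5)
Your proposal is correct and follows exactly the route the paper intends: the paper omits a proof of this proposition, remarking only that the arguments from the even case (Proposition~\ref{prop:schwartz_eigenfunction}) adapt directly, and your adaptation is faithful in every step. The branch bookkeeping in the Fourier-eigenvalue computation (via $(-iz)^{-3/2}\,dz=-(-iz)^{1/2}\,d(-1/z)$ and $(-iw)^{1/2}(i/w)^{1/2}=1$ along the arc) and the identification of $d_m^{\eps}(\sqrt n)/\sqrt n$ with the $q^{-n/2}$-coefficient of $h_m^{\eps}$ are both handled correctly.
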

Furthermore, we have the following estimate on the growth of $d_{n}^{\pm}(x)$
as a function of~$n$.
\begin{theorem} \label{thm:mainest_odd}
	For any $\eps\in\{+,-\}$ the
	numbers $d_n^{\eps}(x)$ satisfy
	\[d_n^{\eps}(x) \= O(n^{5/2})\]
	uniformly in $x$.
\end{theorem}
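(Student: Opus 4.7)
My plan is to mirror the proof of Theorem~\ref{thm:mainest}, running it with $o_\tau$ in place of $e_\tau$, with the kernel $-K_{-\eps}(z,\tau)$ in place of $K_\eps(\tau,z)$, and in weight $3/2$ instead of weight $1/2$.

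First, I would define the generating function
\[
G_{\eps}(\tau;x) \= \frac{1}{2}\int_{-1}^{1}\big({-}K_{-\eps}(z,\tau)\big)\,x\,e^{i\pi x^2 z}\,\dz,
\]
which, by~\eqref{eq:kernels_odd} and absolute convergence for $\Im(\tau)$ sufficiently large, equals $\sum_{n\ge 0}d_n^{\eps}(x)\,e^{i\pi n\tau}$. Using the transformation property~\eqref{eq:kerneltrans} applied in the $\tau$-variable, together with $\widehat{o_\tau}(\xi) = -i(-i\tau)^{-3/2}\,o_{-1/\tau}(\xi)$, I would repeat the contour-deformation argument of Proposition~\ref{prop:genfunceq} to obtain an analytic continuation of $G_{\eps}$ to all of~$\HH$ and the functional equations
\[
G_{\eps}(\tau;x) \= G_{\eps}(\tau+2;x),\qquad
G_{\eps}(\tau;x)+\eta(-i\tau)^{-3/2}G_{\eps}(-1/\tau;x) \= o_\tau(x)+i\eta\,\widehat{o_\tau}(x),
\]
for an appropriate sign $\eta\in\{\pm1\}$ determined by the Fourier-eigenvalue relation $\widehat{d_n^\eps} = -i\eps d_n^\eps$.

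The next step is to produce the odd analogues of Lemma~\ref{lem:cocyclelemma} and Lemma~\ref{lem:kernel_estimate}. Let $\{\phi_A\}_{A\in\Gamma_\theta}$ be the cocycle for the slash operator $|_{3/2}^{-\eta}$ with $\phi_{T^2} = 0$ and $\phi_S(\tau) = o_\tau(x) + i\eta\,\widehat{o_\tau}(x)$. The elementary bounds $|o_\tau(x)|\le (2\pi e\Im(\tau))^{-1/2}$ and $|\widehat{o_\tau}(x)|\le (2\pi e)^{-1/2}|\tau|^{-1/2}\Im(\tau)^{-1/2}$ give $|\phi_S(\tau)|\ll \Im(\tau)^{-1/2}$ on $\{|\tau|\ge 1\}$, and Lemma~\ref{lem:contfrac} confines the intermediate points $\tau_j$ in the cocycle expansion to this region. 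Lemma~\ref{lem:cocyclelemma} with $k=3,\alpha=0,\beta=1/2$ therefore yields the bound $|\phi_A(\tau')|\ll 1 + \Im(\tau)^{-1}+\Im(\tau)^{-3/2}$. For the kernel estimate, the extra factor $x$ in the integrand is controlled uniformly in $x$ via $x\,e^{-\pi x^2\Im(z)}\le (2\pi e\Im(z))^{-1/2}$, adding an extra $\Im(z)^{-1/2}$ to the integrand. Running the argument of Lemma~\ref{lem:kernel_estimate} with $K_{-\eps}(z,\tau)$ in place of $K_\eps(\tau,z)$ (whose analytic structure is the same up to a redistribution of the powers of $\theta$ between the two arguments) and with this additional weight, one obtains an estimate of $|G_\eps(\tau;x)|$ on $\mathcal{D}$ of the form $\ll 1 + \Im(\tau)^{-\gamma}$ for an explicit $\gamma$.

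Combining these pieces exactly as in the proof of Theorem~\ref{thm:mainest}, the functional equation
\[\chi_{-\eta}(A)\,j_\theta(\tau',A)^3\,G_\eps(\tau;x) \= G_\eps(\tau';x) - \phi_A(\tau')\]
together with $|j_\theta(\tau',A)|^{-3} = (\Im(\tau')/\Im(\tau))^{3/4}$ and the standard reduction to the fundamental domain gives $|G_\eps(\tau;x)|\ll\Im(\tau)^{-5/2}$ uniformly in $x$ for $\tau\in\HH$ with $\Im(\tau)<1$, and Hecke's Lemma~\ref{lem:heckebound} with $\alpha=5/2$ then produces $|d_n^\eps(x)|\ll n^{5/2}$. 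The main obstacle is the careful bookkeeping of exponents: both the change of weight from $1/2$ to $3/2$ (which enlarges the slash-operator prefactor from $(\Im(\tau')/\Im(\tau))^{1/4}$ to $(\Im(\tau')/\Im(\tau))^{3/4}$) and the extra factor of~$x$ in the integrand (which adds an extra $\Im(z)^{-1/2}$ in the kernel estimate) contribute half-integer exponents that must be combined with care to produce the claimed~$5/2$ rather than a larger power.
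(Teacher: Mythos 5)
Your proposal follows essentially the same route that the paper intends: the paper does not write out a proof of this theorem, but states explicitly that it is ``based on analyzing the growth as $\Im(\tau)\to 0$ (and establishing analytic continuation) of the generating function'' $G_\eps$, that the functional equation is the displayed \eqref{eq:genfun_main_odd} with exponent $-3/2$ instead of $-1/2$, and that ``with appropriate changes the proof still goes through.'' Your fill-in of these details is the natural one, and all the structural points are right: the definition of $G_\eps$ via $-K_{-\eps}(z,\tau)$, the contour deformation, the sharp bound $|o_\tau(x)|\le(2\pi e\Im\tau)^{-1/2}$ for the cocycle, the choice $\alpha=0,\beta=1/2$ in Lemma~\ref{lem:cocyclelemma}, and the automorphy prefactor $|j_\theta(\tau',A)|^{-3}=(\Im\tau'/\Im\tau)^{3/4}$.

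There is, however, an internal inconsistency in the final exponent bookkeeping, which I suspect you inherited by targeting the paper's stated $O(n^{5/2})$ rather than deriving it. You correctly compute $|\phi_A(\tau')|\ll 1+\Im(\tau)^{-1}+\Im(\tau)^{-3/2}$, and you correctly get the prefactor $(\Im\tau'/\Im\tau)^{3/4}\le\Im(\tau)^{-3/2}$ (when $c>0$, using $\Im(\tau')\Im(\tau)\le 1$). Multiplying these, the dominant contribution from the cocycle term is $\Im(\tau)^{-3/2}\cdot\Im(\tau)^{-3/2}=\Im(\tau)^{-3}$, not $\Im(\tau)^{-5/2}$. (One gets $\Im(\tau)^{-5/2}$ only by pairing the $\Im(\tau)^{-3/2}$ prefactor with the $\Im(\tau)^{-1}$ term of the cocycle bound, but the $\Im(\tau)^{-3/2}$ term is there and dominates; the only way to drop it would be to take $\beta=0$ in Lemma~\ref{lem:cocyclelemma}, which is not permissible here since $|\phi_S(\tau)|\sim\Im(\tau)^{-1/2}$ is unbounded on $\{|\tau|\ge1\}$, unlike the even case where $|\phi_S|\le 2$.) The kernel term likewise costs $\Im(\tau)^{-3/2}$ from the factor $\theta^3(\tau)$, and combining it with the prefactor also gives at most $\Im(\tau)^{-3}$. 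So the cleanest version of your own argument produces $|d_n^\eps(x)|=O(n^{3})$, not $O(n^{5/2})$. This does not matter for any downstream use in the paper (any polynomial bound suffices, and the paper itself notes that even the exponent $2$ in the even case is not optimal), but you should either state the bound you actually prove, or explain how to shave the extra $\Im(\tau)^{-1/2}$.
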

The proof of this estimate is also based on estimating the growth
for $\Im(\tau)\to0$ of the generating function
\begin{equation*} \label{eq:genfunseries_odd}
	G_{\eps}(\tau, x) \= \sum_{n\ge 0} d_n^{\eps}(x)e^{i\pi n \tau}.
\end{equation*}
The functional equations for $G_{\eps}$ are
	\begin{equation}\label{eq:genfun_main_odd}
	\begin{split}
	G_{\eps}(\tau,x) - G_{\eps}(\tau+2,x) &\= 0,\\
	G_{\eps}(\tau,x) + \eps(-i\tau)^{-3/2}G_{\eps}\Big(-\frac{1}{\tau},x\Big) &\=
	xe^{i\pi \tau x^2} + \eps(-i\tau)^{-3/2}xe^{i\pi (-1/\tau) x^2}.
	\end{split}
	\end{equation}
The difference in exponents of $(-i\tau)$ come from the fact that the
weight of $K_{\eps}(z,\tau)$ in variable~$\tau$ is now~$3/2$, but
with appropriate changes the proof still goes through.
Finally, we get the following interpolation theorem for odd Schwartz functions.
\begin{theorem}\label{thm:summation_odd}
	For any odd Schwartz function $f\colon\RR\to\RR$ and any $x\in\RR$ we have
	\begin{equation} \label{eq:summation_odd}
		f(x) \= d_0^{+}(x)\frac{f'(0)+i\ftt{f}\,'(0)}{2}\es{+}
		\sum_{n=1}^{\infty} c_{n}(x)\,\frac{f(\sqrt{n})}{\sqrt{n}}\es{-}
		\sum_{n=1}^{\infty} \ftt{c_{n}}(x)\,\frac{\ftt{f}(\sqrt{n})}{\sqrt{n}},
	\end{equation}
	where $c_n(x) = (d_n^{+}(x)+d_n^{-}(x))/2$.
\end{theorem}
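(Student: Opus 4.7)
The plan is to follow the same strategy as the proof of Theorem~\ref{thm:summation}, with the role of the Gaussians $e_\tau$ played by the odd functions $o_\tau(x)=xe^{i\pi\tau x^2}$. First I would verify \eqref{eq:summation_odd} for $f=o_\tau$, $\tau\in\HH$. Set $G_\eps(\tau;x)=\sum_{n\ge 0}d_n^\eps(x)e^{i\pi n\tau}$ and note that Proposition~\ref{prop:schwartz_eigenfunction_odd} gives $\widehat{d_n^\eps}=(-i\eps)d_n^\eps$, whence $\widehat{c_n}=-i(d_n^+-d_n^-)/2$. Writing $C(\tau;x)=\sum_{n\ge 0}c_n(x)e^{i\pi n\tau}$ and $D(\tau;x)=\sum_{n\ge 0}\widehat{c_n}(x)e^{i\pi n\tau}$, one has $G_\pm=C\pm iD$, and adding the two functional equations \eqref{eq:genfun_main_odd} yields
\[
C(\tau;x) \es{+} i(-i\tau)^{-3/2}D(-1/\tau;x) \= o_\tau(x).
\]
Using $c_0=d_0^+/2$, $\widehat{c_0}=-id_0^+/2$, together with the identities $o_\tau(\sqrt n)/\sqrt n=e^{i\pi n\tau}$, $\widehat{o_\tau}(\sqrt n)/\sqrt n=-i(-i\tau)^{-3/2}e^{-i\pi n/\tau}$ for $n\ge 1$, and $o_\tau'(0)=1$, $\widehat{o_\tau}'(0)=-i(-i\tau)^{-3/2}$, a direct computation identifies the above identity with \eqref{eq:summation_odd} specialized to $f=o_\tau$.

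Next I would use Theorem~\ref{thm:mainest_odd} together with the continuity of the functionals $f\mapsto f(\sqrt n)$, $f\mapsto \widehat f(\sqrt n)$, $f\mapsto f'(0)$, $f\mapsto \widehat f'(0)$ on the space $\mathcal{S}_{odd}$ of odd Schwartz functions to conclude that the right-hand side of \eqref{eq:summation_odd} converges absolutely and defines a tempered distribution in $f$. Hence
\[
\psi_x(f) \= f(x) \es{-} d_0^{+}(x)\frac{f'(0)+i\widehat{f}'(0)}{2}
\es{-}\sum_{n=1}^{\infty} c_{n}(x)\,\frac{f(\sqrt{n})}{\sqrt{n}}
\es{+}\sum_{n=1}^{\infty} \widehat{c_{n}}(x)\,\frac{\widehat{f}(\sqrt{n})}{\sqrt{n}}
\]
is a tempered distribution on $\mathcal{S}_{odd}$ that vanishes on every $o_\tau$, $\tau\in\HH$, by the previous paragraph.

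The final step is a density argument paralleling the even case. The space of compactly supported odd $C^\infty$ functions is dense in $\mathcal{S}_{odd}$, and any such function $f$ can be written as $f(x)=xF(x^2)e^{-\pi x^2}$ with $F$ a compactly supported smooth function (since $f(x)/x$ is an even compactly supported smooth function, hence a smooth function of $x^2$). Fourier inversion gives
\[
f(x) \= \int_{-\infty}^{\infty}\widehat{F}(s)\,o_{i+2s}(x)\,\mathrm{d}s,
\]
and the truncations $h_T(x)=\int_{-T}^{T}\widehat{F}(s)\,o_{i+2s}(x)\,\mathrm{d}s$ converge to $f$ in every Schwartz seminorm. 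Since $\psi_x(h_T)=\int_{-T}^{T}\widehat{F}(s)\,\psi_x(o_{i+2s})\,\mathrm{d}s=0$ and $\psi_x$ is continuous on $\mathcal{S}_{odd}$, passing to the limit gives $\psi_x(f)=0$ for all $f\in\mathcal{S}_{odd}$.

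The hardest part is the bookkeeping in the first step: one must carefully match the $n=0$ contributions from both $C(\tau;x)$ and $i(-i\tau)^{-3/2}D(-1/\tau;x)$ against the term $d_0^+(x)(f'(0)+i\widehat{f}'(0))/2$, which replaces the ill-defined $n=0$ summand $c_0(x)f(\sqrt 0)/\sqrt 0$. Everything else is essentially a mechanical adaptation of the arguments from Section~\ref{sec:funcan}, since Theorem~\ref{thm:mainest_odd} already supplies the needed polynomial growth.
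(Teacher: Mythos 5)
Your proof is correct and follows the same strategy that the paper sketches for Theorem~\ref{thm:summation_odd}: verify the formula for the family $o_\tau$ via the functional equations~\eqref{eq:genfun_main_odd}, then extend by density using the representation $f(x)=xF(x^2)e^{-\pi x^2}$ and continuity of the resulting tempered distribution, which rests on the polynomial bound from Theorem~\ref{thm:mainest_odd}. The $n=0$ bookkeeping you flag works out exactly as you indicate (using $d_0^-=0$, so $c_0=d_0^+/2$, $\widehat{c_0}=-id_0^+/2$, giving $c_0+i(-i\tau)^{-3/2}\widehat{c_0}=d_0^+(x)(1+(-i\tau)^{-3/2})/2$, matching the coefficient term in~\eqref{eq:summation_odd}).
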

As in the even case, the functional equations for $G_{\eps}$ show
that~\eqref{eq:summation_odd} holds for $o_{\tau}(x)$, so one only needs
to show that $o_{\tau}$ are dense in the space of odd Schwartz
functions, which can be done by an approximation argument, similarly to
the proof of Theorem~\ref{thm:summation}.

Let us also note that the even interpolation basis $\{a_n(x)\}_{n}$ is
defined using the kernel $K(\tau,z):=K_+(\tau,z)+K_{-}(\tau,z)$, and the
odd interpolation basis $\{c_n(x)\}_{n}$ is defined using the kernel $\widetilde K(\tau,z):=-K(z,\tau)$.
Thus, even though we have dealt with even and odd interpolation problems separately, \
there is a nice duality between the two.

\textbf{Remark.} As in the even case, using the explicit formula
for the kernels, we get
	\begin{align*}
	d_m^{+}\,'(0) &\= \delta_{m,0},\\
	d_m^{-}\,'(0) &\= -r_3(m),\quad m\ge 1,
	\end{align*}
where $r_3(m)$ is the number of representations of $m$ as the sum of squares
of $3$ integers. Taking $x=0$ in~\eqref{eq:summation_odd} we get the
following identity
	\[f'(0) + \sum_{n=1}^{\infty}\frac{r_3(n)f(\sqrt{n})}{\sqrt{n}}
	\= i\ftt{f}\,'(0)
	+\sum_{n=1}^{\infty}\frac{r_3(n)i\ftt{f}(\sqrt{n})}{\sqrt{n}},\]
valid for arbitrary odd Schwartz functions. As was pointed out to us
by Yves Meyer, this formula was previously found by
Guinand~\cite[p.~265]{Guin}.

\section{Open questions and concluding remarks}
Let us indicate some further directions and observations related
to Theorem~\ref{thm:summation}.
\subsection*{Function space.}
In this paper we have only worked with the space of Schwartz
functions, but it is interesting to ask in what generality the
interpolation formula~\eqref{eq:summation} holds. The best possible
scenario would be a positive answer to the following question.
    \begin{question}
	Do the results of Theorems~\ref{thm:summation}
    and~\ref{thm:summation_odd} hold whenever the sum on
    the right-hand side is well-defined and converges
    absolutely?
    \end{question}
Even to find explicit conditions for when the convergence
is absolute, one would need to obtain exact bounds on the
growth of $b_n^{\eps}(x)$, which appears to be difficult.
Let us outline a simple approximation argument that shows
that the interpolation formula is true whenever both $f$
and $\ftt{f}$ decay sufficiently fast:
\begin{proposition}
	Let $f$ be an even integrable function.
	If $f(x)$ and $\ftt{f}(x)$ are both bounded by~$(1+|x|)^{-13}$,
	then the summation formula~\eqref{eq:summation} holds.
\end{proposition}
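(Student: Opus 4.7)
The plan is to reduce to Theorem~\ref{thm:summation} by approximating $f$ in a controlled way by even Schwartz functions and passing to the limit term-by-term.

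First I would verify that the right-hand side of~\eqref{eq:summation} converges absolutely for $f$ itself. The uniform estimate $|a_n(x)|, |\ftt{a_n}(x)| \ll n^2$ from Theorem~\ref{thm:mainest}, combined with the decay $|f(\sqrt n)|,|\ftt f(\sqrt n)| \ll (1+\sqrt n)^{-13}$, bounds each summand by $O(n^{-9/2})$, yielding an absolutely convergent series. (Continuity of $f$, needed so that $f(\sqrt n)$ is well-defined, follows from the integrability of $\ftt f$ via Fourier inversion; similarly for $\ftt f$.)

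Next, for $\eps\in(0,1]$, I would introduce the even Schwartz approximation
\[f_{\eps} \es{:=} (f\cdot G_{\eps})*H_{\eps},\]
where $G_{\eps}(x) := e^{-\pi\eps x^2}$ and $H_{\eps}(x) := \eps^{-1/2}e^{-\pi x^2/\eps}$ are Fourier partners, so that $\ftt{f_{\eps}} = (\ftt f * H_\eps)\cdot G_\eps$. Multiplication by the Gaussian $G_\eps$ forces rapid decay, and convolution with the Schwartz function $H_\eps$ forces smoothness, giving $f_\eps\in\mathcal{S}_{even}$. The crucial analytic step is the uniform decay estimate
\[|f_\eps(y)|,\; |\ftt{f_\eps}(y)| \leqs C\,(1+|y|)^{-13}\]
for all $\eps\in(0,1]$ with $C$ independent of $\eps$. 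To obtain this for $f_\eps$, I would split the convolution integral into the region $|z|\leqs|y|/2$ (where $(1+|y-z|)^{-13}\leqs 2^{13}(1+|y|)^{-13}$ and $\int H_\eps = 1$) and its complement (where the Gaussian tail of $H_\eps$ is $O(e^{-\pi y^2/4})$ uniformly for $\eps\leqs 1$, beating any polynomial rate); the estimate for $\ftt{f_\eps}$ is symmetric. Pointwise convergence $f_\eps(y)\to f(y)$ and $\ftt{f_\eps}(y)\to\ftt f(y)$ at each $y\in\RR$ then follows from $G_\eps\to 1$ locally uniformly combined with the approximate-identity property of $H_\eps$ applied to the continuous functions $f$ and $\ftt f$.

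With these ingredients in hand, Theorem~\ref{thm:summation} applied to each $f_\eps$ gives
\[f_\eps(x) \= \sum_{n\ge 0} a_n(x)\,f_\eps(\sqrt n) + \sum_{n\ge 0} \ftt{a_n}(x)\,\ftt{f_\eps}(\sqrt n),\]
and sending $\eps\to 0$ finishes the argument: the left-hand side converges to $f(x)$ pointwise, while on the right the uniform decay bound from the previous step supplies the $\eps$-independent summable majorant $O(n^{-9/2})$ required for dominated convergence term-by-term. The main obstacle is precisely this uniform decay estimate, since the regularization must preserve the $(1+|y|)^{-13}$ decay on both sides of the Fourier transform at once; the symmetric double-Gaussian construction with $\ftt{G_\eps}=H_\eps$ is designed for exactly this purpose.
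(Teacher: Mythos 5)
Your argument is correct, and it takes a related but genuinely cleaner route than the paper's.

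Both proofs regularize $f$ by combining multiplication with a wide Gaussian and convolution with a narrow Gaussian, using the Fourier pair $\ftt{G_\eps}=H_\eps$ so that the regularization of $\ftt{f}$ has the same shape (the paper's operator is $\mathcal{R}_{T}(f)=G_{1/T}\cdot\bigl(H_{1/T}*f\bigr)$, i.e.\ the order of the two operations is swapped relative to yours, but this is a cosmetic difference). The substantive divergence is in how the limit is passed to. The paper estimates the error $|\mathcal{R}_T(f)(x)-f(x)|$ explicitly, and this costs a bound on $f'$; the whole point of the ``Carneiro observation'' in the proof is to convert the hypotheses $|f|\ll (1+|x|)^{-13}$ and $|f''|\ll1$ (from Fourier inversion) into $|f'|\ll (1+|x|)^{-13/2}$, which is exactly at the threshold $l>6$ needed to sum against $a_n(x)=O(n^2)$. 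Your argument avoids the derivative entirely: you establish a uniform-in-$\eps$ decay $|f_\eps(y)|,|\ftt{f_\eps}(y)|\ll (1+|y|)^{-13}$ with no loss, and pointwise convergence $f_\eps(\sqrt{n})\to f(\sqrt{n})$, and then dominated convergence (on counting measure, with the $\eps$-independent majorant $n^2(1+\sqrt{n})^{-13}\ll n^{-9/2}$) does the rest.

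What each buys: the paper's version gives an explicit rate of convergence of $\mathcal{R}_T f\to f$, which might be useful elsewhere, but at the price of the derivative detour. Your version is more direct, and in fact it proves a sharper statement for free: since you only need $\sum_n n^2\,(1+\sqrt{n})^{-m}<\infty$, i.e.\ $m>6$, the same argument gives the proposition under the weaker hypothesis $|f(x)|,|\ftt{f}(x)|\ll(1+|x|)^{-7}$, rather than $-13$ — this is precisely the improvement that the paper's concluding remark (``the number 13 can be improved'') alludes to but does not carry out.

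One very small point worth making explicit in a final write-up: the hypothesis is that $f$ is integrable with $|f|,|\ftt{f}|\le(1+|x|)^{-13}$, so one should say that $f$ is taken to be its continuous representative (available since $\ftt{f}\in L^1$), and likewise for $\ftt{f}$, before evaluating at $\sqrt{n}$; you do note this parenthetically, and it is all that is needed for the approximate-identity step.
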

\begin{proof}[Proof sketch]
Indeed, for every~$T>0$
consider the following linear operator $\mathcal{R}_{T}$ that takes
values in~$\mathcal{S}$:
	\[
	\mathcal{R}_{T}(f)(x) \=
	T^{1/2}e_{i/T}\cdot (e_{iT}\ast f)(x)
	\= T^{1/2}e^{-\pi x^2/T}\int_{-\infty}^{\infty}f(x-y)e^{-\pi T y^2}\dy.
	\]
The Fourier transform is then given by
	\[\ftt{\mathcal{R}_{T}(f)}(x)
	\= T^{1/2}e_{iT}\ast (e_{i/T}\cdot \ftt{f})(x)
	\= T^{1/2}\int_{-\infty}^{\infty}f(x-y)e^{-\pi T y^2-\pi(x-y)^2/T}\dy.\]
Then a routine calculation shows that
	\[|\mathcal{R}_{T}(f)(x)-f(x)| \es{\ll}
	(1-e^{-\pi x^2/T})|f(x)| + T^{-1/2}\max_{y\in[x-1,x+1]}|f'(y)|,\]
and similarly
	\[|\ftt{\mathcal{R}_{T}(f)}(x)-\ftt{f}(x)| \es{\ll}
	\Big(1-\frac{e^{-\pi x^2T/(1+T^2)}}{\sqrt{1+T^{-2}}}\Big)|\ftt{f}(x)| + T^{-1/2}\max_{y\in[x-1,x+1]}|\ftt{f}'(y)|.\]
By summing up these estimates for $x=\sqrt{n}$ over $n\ge1$ and
taking the limit as $T\to\infty$ we see that the proof will be complete
if we can show that $f'(x)$ and $\ftt{f}'(x)$ decay as $(1+|x|)^{-l}$ for
some $l>6$ (since $a_n(x) = O(n^{2})$).
We consider only bounding $f'(x)$, since one can obtain the other estimate
by interchanging~$f$ and~$\ftt{f}$.
It was pointed out to the authors by Emanuel Carneiro that
this can be done using the following simple observation:
if~$g$ is a $C^2$-smooth function on $[1,\infty)$
that satisfies $|g(x)| \ll x^{-k}$ and $|g''(x)| \ll 1$
then~$|g'(x)|\ll x^{-k/2}$. Indeed, then by the Fourier inversion
formula we have $|f''(x)| \ll 1$, so we can apply the observation to get
$|f'(x)| \ll (1+|x|)^{-13/2}$, and thus we are done.

To prove the above observation: let $|g''(x)| \le 1$ and $|g(x)| \le Cx^{-k}$.
Then Taylor's theorem with remainder in the Lagrange form implies that
for any $\Delta\ge 0$ we have
	\[|g(x+\Delta)-g(x)-g'(x)\Delta| \leqs \frac{\Delta^2}{2},\]
from which we get, taking $\Delta=2\sqrt{Cx^{-k}}$, that
	\[|g'(x)| \leqs \frac{\Delta}{2}+\frac{2Cx^{-k}}{\Delta}
	\= 2\sqrt{C}\,x^{-k/2},\]
as required.
\end{proof}
Note that the number ``13'' in the above proposition can be
improved by using more careful estimates.

\subsection*{Relation to the Laplace transform.}
The basis functions that we have constructed are all of the shape
	\[f(x) \= \frac12\int_{-1}^{1}g(z)e^{i\pi x^2z}\dz\]
for some weakly holomorphic modular form $g$
(in the odd case, $f$ is multiplied by $x$).
To get an alternative expression for $f$ we can shift the contour
of integration to the rectangular line passing
through $-1$, $-1+iT$, $1+iT$, and $1$. A simple computation
then shows that
	\[f(x) \= \sin(\pi x^2)\int_{0}^{T}g(1+it)e^{-\pi x^2t}\dt
	+ e^{-\pi x^2 T}\int_{-1}^{1}g(s+iT)e^{i\pi s x^2}ds.\]
If we take $T$ to infinity, then we see that for all $x^2$ greater than
the order of the pole of~$g$ at~$i\infty$ we have
	\[f(x) \= \sin(\pi x^2)\int_{0}^{\infty}g(1+it)e^{-\pi x^2t}\dt.\]
The integral on the right is simply the Laplace transform of $g(1+it)$
evaluated at $\pi x^2$. This can be used to show that all but finitely
many real zeros of $b_{m}^{\pm}(x)$ are of the form~$\pm\sqrt{n}$.
Combined with the $q$-expansion of~$g(1+z)$ at infinity, this also
implies that $b_{m}^{\pm}$ extends analytically to an entire function.
Alternatively, this also follows directly from the
definition~\eqref{eq:bm_definition}.

\subsection*{Sine-sinh ratio.}
The function~$d_0^{+}(x)$ is quite special. Recall that it is defined by
	\[d_0^{+}(x) \= \frac12\int_{-1}^{1}\theta(z)\,xe^{i\pi x^2z}\dz.\]
Changing the contour of integration as before, we get
	\[d_0^{+}(x) \= x\sin(\pi x^2) \int_{0}^{\infty}\theta(1+it)e^{-\pi x^2 t}\dt.\]
Next, integrating the $q$-expansion of $\theta$ termwise and using the identity
	\[\sum_{n\in\ZZ}\frac{(-1)^n}{\pi(x^2+n^2)} \= \frac{1}{x\sinh(\pi x)}\]
we find that $d_0^{+}(x)$ is, in fact, an elementary function:
	\[d_0^{+}(x) \= \frac{\sin(\pi x^2)}{\sinh(\pi x)}.\]
Note that $d_0^{+}(x)$ and its Fourier transform $\ftt{d_0^{+}}(x) = (-i)d_0^{+}(x)$
both vanish at $x=\pm \sqrt{n}$ for all $n\ge 0$. It
follows from Theorems~\ref{thm:summation} and~\ref{thm:summation_odd} that any Schwartz function with this property is of the
form $\alpha d_0^+$.

It appears that this function was first considered by Ramanujan in~\cite{Ram},
where he studies a number of integrals involving similar expressions,
and, in particular, shows the Fourier invariance of $d_0^{+}$
(see~\cite{Ram}*{eq.~34}).
It is also directly related to the so-called Mordell integral~\cite{Mo}, which
played an important role in Zwegers's seminal work on mock theta functions~\cite{Zw}.

\end{document}